\theoremstyle{plain}
\newtheorem{theorem}{Theorem}[section]
\newtheorem{lemma}[theorem]{Lemma}
\newtheorem{proposition}[theorem]{Proposition}
\theoremstyle{definition}
\newtheorem{assumption}[theorem]{Assumption}
\theoremstyle{remark}
\newtheorem{remark}[theorem]{Remark}
\newcommand{\Div}{\operatorname{div}}
\numberwithin{equation}{section}
\newcommand{\bR}{\mathbb{R}}
\newcommand\cD{\mathcal{D}}
\newcommand\cQ{\mathcal{Q}}
\def\dashint{\operatorname%
{\,\,\text{\bf--}\kern-.98em\DOTSI\intop\ilimits@\!\!}}
\begin{document}
	
\title[Higher regularity for equations from composite materials]{Higher regularity for solutions to equations arising from composite materials}

\author[H. Dong]{Hongjie Dong}
\address[H. Dong]{Division of Applied Mathematics, Brown University, 182 George Street, Providence, RI 02912, USA}
\email{Hongjie\_Dong@brown.edu }
\thanks{H. Dong was partially supported by the Simons Foundation, grant no. 709545, a Simons fellowship, grant no. 007638, the NSF under agreement DMS-2055244, and the Charles Simonyi Endowment at the Institute for Advanced Study.}

\author[L. Xu]{Longjuan Xu}
\address[L. Xu]{Department of Mathematics, National University of Singapore, 10 Lower Kent Ridge Road, Singapore 119076}
\email{ljxu311@163.com}
\thanks{}

\subjclass[2020]{35K40, 35J15, 35B65, 74E30, 35Q74, 78A48}

\begin{abstract}
We consider parabolic systems in divergence form with piecewise $C^{(s+\delta)/2,s+\delta}$ coefficients and data in a  bounded domain consisting of a finite number of cylindrical subdomains with interfacial boundaries in $C^{s+1+\mu}$, where $s\in\mathbb N$, $\delta\in (1/2,1)$, and $\mu\in (0,1]$. We establish piecewise $C^{(s+1+\mu')/2,s+1+\mu'}$ estimates for weak solutions to such parabolic systems, where $\mu'=\min\big\{1/2,\mu\big\}$, and the estimates are independent of the distance between the interfaces. In the elliptic setting, our results answer an open problem (c) in Li and Vogelius (Arch. Rational Mech. Anal. 153 (2000), 91--151).
\end{abstract}

\maketitle

\section{Introduction and main results}
\subsection{Introduction}
In this paper, we study higher derivative estimates of solutions to divergence form parabolic systems
\begin{equation}\label{systems}
-u_t+D_\alpha(A^{\alpha\beta}D_\beta u)=D_\alpha f^\alpha=:\Div f\quad\mbox{in}~\cQ:=(-T,0)\times\cD,
\end{equation}
where $T\in(0,\infty)$, $\cD:=\cup_{j=1}^{M}\overline{\cD}_j\setminus\partial\cD\subset\mathbb R^{d}$ is a bounded domain containing a finite number of subdomains $\cD_j$, $j=1,\ldots,M$, and $d\geq2$.  Here
$$
u=(u_{1},\ldots,u_{n})^{\top},\quad f^{\alpha}=(f^{\alpha}_{1},\ldots,f^{\alpha}_{n})^{\top}
$$
are (column) vector-valued functions, $A^{\alpha\beta}$  are $n\times n$ matrices which  satisfy the strong ellipticity condition: there exists a number $\nu>0$ such that for any $\xi=(\xi_{\alpha}^{i})\in\mathbb R^{n\times d}$,
$$\nu|\xi|^{2}\leq A_{ij}^{\alpha\beta}\xi_{\alpha}^{i}\xi_{\beta}^{j},\quad|A^{\alpha\beta}|\leq\nu^{-1},$$
and are assumed to be piecewise smooth in each subdomain\footnote{Even though we assume the strong ellipticity condition in the proofs below, our main result still holds under a weaker ellipticity condition: $\int_{\cD} A_{ij}^{\alpha\beta}D_\alpha\phi^i D_\beta\phi^j\ge \nu \int_{\cD} |D\phi|^2$ for any $\phi\in H^1_0(\cD)$, and in particular for the linear systems of elasticity. See \cite[pp. (1.4)--(1.6)]{ln03} for details.}.
Throughout this paper, we use the Einstein summation convention over repeated indices. We will denote $A^{\alpha\beta}$ by $A$ for abbreviation.

In the elliptic setting (i.e., the time-independent case), this problem arises from the stress analysis in composite materials consisting of  inclusions (subdomains) embedded in the background medium (the matrix), where the inclusions have material properties different from that of the matrix. The mathematical problem is formulated in terms of a finite number of disjoint bounded subdomains $\cD_{j}$, $j=1,\ldots,M$. The subdomains are assumed to be appropriately smooth. With these conditions, the physical properties of the composite media are described by a divergence form partial differential equations (PDEs)  with coefficients that are smooth in each subdomain but have discontinuities across the interface separating the subdomains. From an  engineering point of view, the most  important quantity is the stress represented by the gradient of the solution to PDEs. There is massive  literature in this direction.  We will only mention the most relevant works in this paper.

A special case  when $\cD_1$ and $\cD_2$ are two touching disks in a bounded domain $\cD\subset\mathbb R^2$ was studied by Bonnetier and Vogelius \cite{bv00}. They considered the scalar equation
\begin{equation}\label{homoscalar}
D_\alpha(a(x)D_\alpha u)=0\quad\mbox{in}~\cD,
\end{equation}
where the coefficient $a(x)$ is given by
\begin{align*}
a(x)=a_0\mathbbm{1}_{\cD_1\cup\cD_2}+\mathbbm{1}_{\cD\setminus(\cD_1\cup\cD_2)},
\end{align*}
with $0<a_0<\infty$ and $\mathbbm{1}_{\bullet}$ is the indicator function. They showed that $|Du|$ is bounded by using a M\"{o}bius  transformation and the maximum principle. The numerical analysis also indicates such result holds for certain elliptic systems; see the work \cite{basl99} for  the
equations of elasticity. The elliptic equation with nonhomogeneous terms
\begin{equation}\label{nonhomo}
D_\alpha(A^{\alpha\beta}D_\beta u)=\Div f\quad\mbox{in}~\cD
\end{equation}
was  first studied by Li and Vogelius in \cite{lv00}. They showed that  any weak solution $u$ to \eqref{nonhomo} is piecewise $C^{1,\delta}$ with $\delta\in\big(0,\frac{\mu}{d(1+\mu)}\big]$, under the assumption that the interface is in $C^{1,\mu}$, the coefficients $A^{\alpha\beta}$ and the data $f$ are piecewise $C^{\delta}$, where $\mu\in(0,1]$. Such  Schauder estimates were improved to $C^{1,\delta}$ with $\delta\in\big(0,\frac{\mu}{2(1+\mu)}\big]$ in \cite{ln03}, and to $C^{1,\delta}$ with $\delta\in\big(0,\frac{\mu}{1+\mu}\big]$ in \cite{dx2019}, where  second-order elliptic system in divergence form was considered. The main feature of \cite{lv00,ln03,dx2019} is that more than two components are allowed to touch and, interestingly, these estimates are independent of the distance between the subdomains.  See also \cite{ckvc86,cf2012,d2012,xb2013,dl2019} and the references therein.
The corresponding results for parabolic equations and systems were studied in \cite{fknn13,ll17}, where the subdomains are assumed to be cylindrical and coefficients satisfy some smooth regularity assumptions, and in \cite{dx2021}, where the subdomains are allowed to be non-cylindrical, and the interfacial boundaries are assumed to be $C^{1,\text{Dini}}$ in the spatial variables and $C^{\gamma_{0}}$ in the time variable with $\gamma_{0}>1/2$.

In \cite[Page 94]{lv00}, Li and Vogelius also studied higher regularity of solutions to \eqref{homoscalar} in the special case when $\cD_1$ and $\cD_2$ are two touching unit disks centered at $(0,-1)$ and $(0,1)$ in $\mathbb R^2$, and $\cD$ is a disk $B_{R_0}:=\{x: |x|<R_0\}$ with $R_0>2$. By using known results, it is easily seen that away from the origin $u$ is smooth in each subdomain up to the interfacial boundaries. The regularity issue near the origin is subtle because of the geometry of the subdomains. In \cite{lv00}, by using conformal mappings it was proved that in the 2D case, for sufficiently large $R_0$, $u$ is piecewise smooth up to interfacial boundaries near the origin. Among other thought-provoking questions raised in \cite{lv00}, the authors asked (1) whether the condition that $R_0$ being sufficiently large can be dropped, (2) whether a uniform estimate holds when the inclusions are close to each other but do not touch, and ultimately (3) whether a similar result holds true for general subdomains in any dimensions.

The first question was answered affirmatively by the first named author and H. Zhang in \cite{dz2016} using an explicit construction of Green's function, which is represented as an infinite series of logarithmic functions composed with conformal mappings. The second question was answered a bit later by the first named author and H. Li in \cite{dl2019} by also using a Green function method. In both papers,
the authors considered more general non­homogeneous equations and obtained Schauder type estimates as well as optimal derivative estimates by showing the explicit dependence of the coefficients and the distance between interfacial boundaries of inclusions. In particular, when $a_0 = 0$ or $\infty$, it was shown that for any positive integer $j$, $|D^j u|$ blows up at the rate $\varepsilon^{-j/2}$ with $\varepsilon$ being the distance between two disks, which agrees with the known results in \cite{BC84, M96} when $j = 1$.
See also a recent interesting paper \cite{jk21} for related results about higher derivative estimates in dimension two with circular inclusions.

In this paper, we address the third question mentioned above. We consider more general divergence form parabolic systems with piecewise $C^{(s+\delta)/2,s+\delta}$ coefficients and data in a  bounded domain consisting of a finite number of cylindrical subdomains with $C^{s+1+\mu}$ interfacial boundaries, where $s\in\mathbb N$, $\delta\in (1/2,1)$, and $\mu\in (0,1]$. We establish  piecewise $C^{(s+1+\mu')/2,s+1+\mu'}$ estimates for weak solutions to such parabolic systems, where $\mu'=\min\big\{\frac 1 2,\mu\big\}$, and the estimates are independent of the distance between the interfaces.  In the time-independent case, the corresponding result for elliptic systems follows.

It is worth mentioning that in the case of two subdomains, the problem is also closely related to the transmission problem. We refer the reader to the work \cite{xb2013, z2021} for results about sharp regularity in various spaces and interior higher-order Schauder estimates for weak solutions to the transmission problem.

\subsection{Main results}
For $\varepsilon>0$ small, we set
$$\cD_{\varepsilon}:=\{x\in \cD: \mbox{dist}(x,\partial \cD)>\varepsilon\}.$$

\begin{assumption}\label{assump}
The domain, coefficients, and data satisfy the following conditions, respectively:
	
\begin{enumerate}
\item The domain $\cD$ contains $M$ disjoint subdomains $\cD_{j},j=1,\ldots,M,$ and the interfacial boundaries are $C^{s+1+\mu}$, where $s\in\mathbb N$ and $\mu\in(0,1]$. We also assume that any point $x\in\cD$ belongs to the boundaries of at most two of the $\cD_{j}$'s.
\item The coefficients $A^{\alpha\beta}$ and the data $f^{\alpha}$ are of class $C^{(s+\delta)/2,s+\delta}((-T+\varepsilon,0)\times (\cD_{\varepsilon}\cap\overline{{\cD}_{j}})),j=1,\ldots,M$, where $\delta\in\big(\frac{1}{2},1\big)$.
\end{enumerate}
\end{assumption}

Here is the main result of the paper.

\begin{theorem}\label{Mainthm}
Let $\cQ=(-T,0)\times\cD$, $\varepsilon\in (0,1)$, $p\in(1,\infty)$,  $A^{\alpha\beta}$ and $f^{\alpha}$ satisfy Assumption \ref{assump}. Let $u\in \mathcal{H}_{p}^{1}(\cQ)$ be a weak solution to \eqref{systems} in $\cQ$. Then $u\in C^{(s+1+\mu')/2,s+1+\mu'}((-T+\varepsilon,0)\times (\cD_{\varepsilon}\cap\overline{{\cD}_{j_0}}))$ and satisfies
\begin{align*}
&|u|_{(s+1+\mu')/2,s+1+\mu';(-T+\varepsilon,0)\times (\cD_{\varepsilon}\cap\overline{{\cD}_{j_0}})}\\
&\leq N\Big(\|Du\|_{L_{1}((-T,0)\times \cD)}+\sum_{j=1}^{M}|f|_{(s+\delta)/2,s+\delta;(-T,0)\times \overline{\cD_{j}}}\Big)
\end{align*}
for any $j_0=1,\ldots,M$, where $\mu'=\min\big\{\frac{1}{2},\mu\big\}$, $N$ depends on $n$, $d$, $M$, $p$, $\nu$, $\varepsilon$, $|A|_{(s+\delta)/2,s+\delta;(-T,0)\times\overline{\cD_{j}}}$, and the $C^{s+1+\mu}$ characteristic of $\cD_{j}$.
\end{theorem}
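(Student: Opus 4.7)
I would prove Theorem~\ref{Mainthm} by induction on $s$. The base case $s=0$, namely the piecewise $C^{(1+\mu')/2,1+\mu'}$ estimate, is already supplied by the authors' earlier work \cite{dx2019,dx2021}, so the task is the inductive step: assuming the conclusion for $s-1$ with the correspondingly weaker hypotheses on $A$, $f$, and the interfacial boundaries, derive it for $s$. By a partition of unity and a standard covering argument it is enough to prove a local estimate near an arbitrary $x_0\in \cD_\varepsilon$. If $x_0$ lies in the interior of a single $\cD_{j_0}$, classical interior parabolic Schauder theory suffices; otherwise, by Assumption~\ref{assump}(a), $x_0$ belongs to exactly two subdomains $\cD_{j_0}$ and $\cD_{j_1}$, and I would apply a spatial $C^{s+1+\mu}$ diffeomorphism to flatten the interface to $\{x^d=0\}$. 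No change is needed in time since the subdomains are cylindrical, and because $s+1+\mu>s+\delta$ the transformed coefficients and data remain piecewise $C^{(s+\delta)/2,s+\delta}$.

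The heart of the inductive step is to differentiate in directions that preserve the transmission structure, namely the tangential spatial variables $e_1,\ldots,e_{d-1}$, and in $\partial_t$ for the added temporal regularity. For a tangential unit vector $e$, the derivative $v:=D_e u$ satisfies a divergence-form system of the same form as \eqref{systems}, with new right-hand side of the shape $\Div\bigl(D_e f-(D_e A)Du\bigr)$, whose coefficients lie in $C^{(s-1+\delta)/2,s-1+\delta}$ once $Du$ is controlled through the previous induction step. Applying the inductive hypothesis to $v$ yields a $C^{(s+\mu')/2,s+\mu'}$ bound on every tangential first derivative of $u$, and hence on every mixed derivative $\partial_t^{k}D_{x'}^{\alpha}u$ with $2k+|\alpha|\le s+1$.

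The remaining purely normal derivatives $D_d^j u$, $1\le j\le s+1$, are then recovered from the equation itself: on each side of $\{x^d=0\}$ the system may be solved algebraically for $A^{dd}D_d^{2}u$ in terms of tangential and temporal derivatives and of $f$, while the conormal transmission condition $[A^{d\beta}D_\beta u]^{x^d=0^+}_{x^d=0^-}=[f^d]^{x^d=0^+}_{x^d=0^-}$ matches the one-sided traces of $D_d u$ across the flattened interface. Iterating this reconstruction $s$ times, and combining with the tangential estimate above, gives the desired piecewise $C^{(s+1+\mu')/2,s+1+\mu'}$ bound, and one then pulls back through the diffeomorphism to obtain the estimate on the original domain.

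The main obstacle will be executing this bootstrap while preserving both the correct Hölder exponents and the independence of the constants from the distance between disjoint interface components. Several aspects require care: the differentiated right-hand side couples $A$, $f$ and $Du$, so the inductive hypothesis must be invoked with the exponents $s-1+\delta$, $s+\mu'$ and $s+1+\mu$ matching simultaneously; the normal-derivative reconstruction must be ordered so that no Hölder regularity is lost on either side; and the constants at each stage must depend only on $\nu$, the piecewise Hölder norms of $A$, and the $C^{s+1+\mu}$ characteristic of each $\cD_j$. As in \cite{dx2019,dx2021}, this last feature appears to preclude a direct perturbation off the smooth case and to require a frozen-coefficient/compactness scheme carried out afresh at each induction level. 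The persistent ceiling $\mu'=\min\{1/2,\mu\}$ then reflects the parabolic regularity of traces on the (time-independent) interface and has to be propagated unchanged through every induction step.
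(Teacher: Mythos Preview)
Your proposal has a genuine gap at the flattening step. Assumption~\ref{assump}(a) only says that each \emph{point} lies on at most two interfacial boundaries; it does \emph{not} say that a small ball around $x_0$ meets at most one interface. In the paper's local model (Section~\ref{Assumption}) a single ball $B_1$ contains $m$ graphs $\Gamma_1,\ldots,\Gamma_m$ with $h_1<\cdots<h_m$, and the whole point of the theorem is that the estimate be \emph{independent of the gaps} $h_j-h_{j-1}$. If you flatten the one interface through $x_0$ and then differentiate in $e_1,\ldots,e_{d-1}$, the neighbouring interfaces $\Gamma_{j\pm 1}$ are still present, still curved, and may be arbitrarily close; to apply the induction hypothesis on a cylinder that sees only the flat interface you would have to shrink the radius below the nearest gap, and every constant would then depend on that distance. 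This is exactly the obstruction that makes the problem nontrivial and that your outline does not resolve.

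The paper's mechanism for avoiding this is quite different from flattening. It builds vector fields $\ell^k$ that are simultaneously tangent to \emph{all} the $\Gamma_j$ (see \eqref{elld}--\eqref{defell}); these are only $C^{1/2}$ globally, with $|D\ell^k|\lesssim |h_j-h_{j-1}|^{-1/2}$ (Lemma~\ref{lemell}), and this $C^{1/2}$ ceiling is the true source of $\mu'=\min\{1/2,\mu\}$, not a trace phenomenon. Differentiating $u$ along $\ell$ produces singular lower-order terms, which are neutralised by subtracting a carefully designed $u_0$ (equations \eqref{defu0}, \eqref{defu-0}) and an auxiliary solution $\mathfrak u$ (equation \eqref{eq-rmu}); the resulting function $\tilde u$ is then handled by a Campanato/weak-$(1,1)$ decay argument (Proposition~\ref{lemma iteraphi}, Lemma~\ref{lemma itera}) in which the gap-dependent quantities cancel. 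Your bootstrap-by-flattening misses this cancellation structure and would not yield gap-independent constants.
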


\begin{remark}
The proof of Theorem \ref{Mainthm} actually gives the following better estimate in the time variable:
\begin{align*}
[D^su]_{t,(1+\delta)/2;(-T+\varepsilon,0)\times (\cD_{\varepsilon}\cap\overline{{\cD}_{j_0}})}\leq N\Big(\|Du\|_{L_{1}((-T,0)\times \cD)}+\sum_{j=1}^{M}|f|_{(s+\delta)/2,s+\delta;(-T,0)\times \overline{\cD_{j}}}\Big),
\end{align*}
where $j_0=1,\ldots,M$. See Lemmas \ref{lemut} and \ref{lemmaDDDu} below.
\end{remark}

\begin{remark}
When $s=0$, piecewise H\"{o}lder-regularity of $Du$ was proved in \cite{dx2021}. When $s=1$, to obtain piecewise regularity of $u$, we  first prove the  regularity of $u_t$ since we need to use the equation \eqref{systems} to solve for $D^{2}u$. For this, we  employ finite difference quotient argument to get
$$u_t\in C^{1/4,1/2}((-T+\varepsilon,0)\times(\cD_\varepsilon\cap\overline{{\cD}_{j}})).$$
See the proof of Lemma \ref{lemut} for the details.
When $s\geq2$, we can differentiate the equation \eqref{systems} with respect to $t$  and obtain
$$u_t\in C^{(s-1+\mu')/2,s-1+\mu'}((-T+\varepsilon,0)\times (\cD_{\varepsilon}\cap\overline{{\cD}_{j}})),~\mu'=\min\big\{\frac{1}{2},\mu\big\}.$$
See the proof of Lemma \ref{lemmaDDDu} for more details.
\end{remark}

\begin{remark}
As explained in \cite[Corollary 2.2]{dx2021},  any $\mathcal{H}_{1}^{1}(\cQ)$ weak solution to \eqref{systems} is in $\mathcal{H}_{p,loc}^{1}(\cQ)$, thus the results in Theorem \ref{Mainthm} hold under the assumption that $u\in\mathcal{H}_{1}^{1}(\cQ)$.
\end{remark}

Now let us end this section with the organization of the paper and an outline of the proof. In Section \ref{preliminary}, we first present the basic notation and definitions. For simplicity, we assume that $\cD$ is a unit ball $B_1$ and take $\cQ$ to be a unit cylinder $Q_1^-:=(-1,0)\times B_1$. Then we represent the interfacial boundaries in $B_1$ by $x^d=h_j(x')$, where $j=1,\ldots,m+1$, and $h_j\in C^{s+1+\mu}(B'_{1})$. Using the functions $h_j$, we define a vector field $\ell^{k,0}=(0,\ldots,0,1,0,\ldots,\ell_d^{k,0})$ near $0$ which is a tangential direction on each interfacial boundary, where $k=1,\ldots,d-1$, the $k$-th component is 1, and $\ell_d^{k,0}$ is defined in \eqref{elld}.  From the vector field $\ell^{k,0}$, we get a unit vector field $\ell^k$ which is orthogonal to each other; see Figure \ref{coordinates}. For the properties of $\ell^k$, we refer the reader to Lemma \ref{lemell} below.
	
We give a complete proof of Theorem \ref{Mainthm} with $s=1$ in Section  \ref{C2delta}. The main idea is to consider the directional derivatives of $u$ along the vector field $\ell^k$ defined in Section \ref{preliminary}. We first derive a new parabolic system in divergence form
$$
-\tilde u_t+D_\alpha(A^{\alpha\beta}D_\beta \tilde u)=D_\alpha \tilde f^\alpha+g,$$
where $\tilde f^\alpha$ and $g$ are defined in \eqref{def-tildefalpha} and \eqref{defg}, respectively, $\tilde u:=u_{\ell}-{\mathfrak u}$, ${\mathfrak u}$ is defined in \eqref{def-v} which is piecewise  smooth,
\begin{align*}
u_{\ell}=D_{\ell}u-\sum_{j=1}^{m+1}\tilde\ell_{i,j}D_iu(t_0,P_jx_0),
\end{align*}
$P_jx_0$ is defined in \eqref{Pjx},  $\tilde\ell_{,j}$ is a smooth extension of $\ell|_{\cD_j}$ to $\cup_{k=1,k\neq j}^{m+1}\cD_k$, and $(t_0,x_0)\in Q_1^-$ is a fixed point.  Then  piecewise H\"{o}lder regularity of $\tilde u$ is derived by adapting Campanato's approach in  \cite{c1963,g1983}, and further developed in \cite{d2012,dk17,dx2021} and the references therein. The key point in employing Campanato's method is to show the mean oscillation of $D\tilde u$ in cylinders vanishes in a certain order as the radii of the  cylinders go to zero. However, we cannot apply this method to $D\tilde u$ directly since $D\tilde u$ is discontinuous across the interfaces. To overcome this difficulty, we first use the weak type-$(1,1)$ estimate for solutions to parabolic systems with coefficients depending only on one direction and a certain decomposition of $\tilde u$, to establish a decay estimate Proposition \ref{lemma iteraphi} of
\begin{equation}\label{funcional}
\inf_{\mathbf q^{k'},\mathbf Q\in\mathbb R^{n}}\left(\fint_{Q_r^-(\Lambda z_0)}\big(|D_{y^{k'}}\tilde{\mathfrak u}-\mathbf q^{k'}|^{\frac{1}{2}}+|\mathcal{A}^{d\beta}D_{y^\beta}\tilde{\mathfrak u}-\tilde {\mathfrak f}^d-\mathbf Q|^{\frac{1}{2}}\big)\,dt\,dy\right)^{2},
\end{equation}
where  $r\in(0,1/4)$, $y=\Lambda x$, $\Lambda=(\Lambda^{\alpha\beta})_{\alpha,\beta=1}^{d}$ is a $d\times d$ orthogonal matrix representing the linear transformation from the coordinate system associated with $0$ to the coordinate system associated with $x_0$, and $$
\tilde {\mathfrak u}(t,y)=\tilde u(t,x), \quad \mathcal{A}^{\alpha\beta}(t,y)=\Lambda^{\alpha k}A^{ks}(t,x)\Lambda^{s\beta}, \quad \tilde {\mathfrak f}^\alpha(t,y)=\Lambda^{\alpha k}\tilde f^k(t,x).
$$
Then with the help of  the decay estimate of the functional in \eqref{funcional} together with the estimates of $|D_{\ell^{k'}}\tilde u-D_{y^{k'}}\tilde{\mathfrak u}|$ and $|\tilde U-\mathcal{A}^{d\beta}D_{y^\beta}\tilde{\mathfrak u}+\tilde {\mathfrak f}^d|$ in \eqref{difference-coor} below, in Lemma \ref{lemma itera} we obtain the decay estimate  of
$$
\inf_{\mathbf q^{k'},\mathbf Q\in\mathbb R^{n}}\left(\fint_{Q_r^-(z_0)}\big(|D_{\ell^{k'}}\tilde u-\mathbf q^{k}|^{\frac{1}{2}}+|\tilde U-\mathbf Q|^{\frac{1}{2}}\big)\,dz\right)^{2},$$
where $\tilde U:=n^\alpha(A^{\alpha\beta}D_\beta \tilde u-\tilde f^\alpha)$ and $n^\alpha$ is defined in \eqref{defnorm} below. The desired result is then proved  by utilizing the decay estimate.

In Section \ref{secgeneral}, we deal with the case when $s\ge 2$ by using a similar scheme. To this end, we consider the following parabolic system in divergence form
$$
-\breve u_t+D_\alpha(A^{\alpha\beta}D_\beta \breve u)=D_\alpha \breve f^\alpha+\breve g,$$
where $\breve u:=u^{\ell}-{\mathsf u}$, ${\mathsf u}$ is defined in \eqref{defsfu} which is piecewise  smooth,
\begin{align*}
u^{\ell}=D_{\ell}D_{\ell}\cdots D_{\ell}u-u_0,
\end{align*}
$u_0$ is defined in \eqref{defu-0}, $\breve f^\alpha$ and $\breve g$ are defined in \eqref{def-brevefalpha} and \eqref{defg2}, respectively.  Then via a similar argument used in Section \ref{C2delta}, we establish a decay estimate of
$$
\inf_{\mathbf q^{k},\mathbf Q\in\mathbb R^{n}}\left(\fint_{Q_r^-(z_0)}\big(|D_{\ell^{k}}\breve u-\mathbf q^{k}|^{\frac{1}{2}}+|\breve U-\mathbf Q|^{\frac{1}{2}}\big)\,dz\right)^{2},$$
where  $\breve U:=n^\alpha(A^{\alpha\beta}D_\beta \breve u-\breve f^\alpha)$. With this estimate,   Theorem \ref{Mainthm} follows.

In the Appendix \ref{Append}, we present the local boundedness and piecewise H\"{o}lder-regularity of $Du$ for the solution of \eqref{systems},  and some auxiliary estimates proved in \cite{dx2021}, which are used frequently in the current paper.

\section{Preliminaries}\label{preliminary}
In this section, we first present the basic notation and  the function spaces. Then we provide the assumptions of the subdomains by assuming $\cD$ is a unit ball $B_1$ for simplicity and introduce vector fields near the origin $0$ together with some properties of the vector fields.

\subsection{Notation and definitions}
For $r>0$ and $z=(t,x)\in\mathbb R^{d+1}$, $d\geq2$, we denote
$$Q_{r}^{-}(t,x):=(t-r^2,t)\times B_{r}(x),$$
where $B_{r}(x):=\{y\in\mathbb R^{d}: |y-x|<r\}$.  For $x\in\mathbb R^{d}$, we denote $x'=(x^1,\ldots,x^{d-1})\in\mathbb R^{d-1}$ and $B'_{r}(x'):=\{y'\in\mathbb R^{d-1}: |y'-x'|<r\}$. We often write $B_r$ and $B'_{r}$ for $B_{r}(0)$ and $B'_{r}(0')$, respectively.
The  parabolic distance between two points $z_1=(t_1,x_1)$ and $z_2=(t_2,x_2)$ is defined by
\begin{equation*}
|z_1-z_2|_{p}:=\max\left\{|t_1-t_2|^{1/2},|x_1-x_2|\right\}.
\end{equation*}
We denote the parabolic boundary of a cylinder $\cQ=(a,b)\times\cD$ by
$$\partial_{p}\cQ=((a,b)\times\partial\cD)\cup(\{a\}\times\overline \cD).$$
For a function $f$ defined in $\mathbb R^{d+1}$, we set
$$(f)_{\cQ}=\frac{1}{|\cQ|}\int_{\cQ}f(t,x)\ dx\,dt=\fint_{\cQ}f(t,x)\ dx\,dt,$$
where $|\cQ|$ is the $d+1$-dimensional Lebesgue measure of $\cQ$.

Next, for $\gamma\in(0,1]$, we denote the $C^{\gamma/2,\gamma}$ semi-norm by
$$[u]_{\gamma/2,\gamma;\cQ}:=\sup_{\begin{subarray}{1}(t,x),(s,y)\in\cQ\\
	(t,x)\neq (s,y)
	\end{subarray}}\frac{|u(t,x)-u(s,y)|}{|t-s|^{\gamma/2}+|x-y|^{\gamma}},$$
and the $C^{\gamma/2,\gamma}$ norm by
$$|u|_{\gamma/2,\gamma;\cQ}:=[u]_{\gamma/2,\gamma;\cQ}+|u|_{0;\cQ},\quad \text{where}\,\,|u|_{0;\cQ}=\sup_{\cQ}|u|.$$
Define
\begin{align*}
[u]_{t,\gamma;\cQ}&:=\sup_{\begin{subarray}{1}(t,x),(s,x)\in\cQ\\
	\quad t\neq s
	\end{subarray}}\frac{|u(t,x)-u(s,x)|}{|t-s|^{\gamma}},\\ [u]_{(1+\gamma)/2,1+\gamma;\cQ}&:=[Du]_{\gamma/2,\gamma;\cQ}+[u]_{t,(1+\gamma)/2;\cQ},
\end{align*}
and
$$|u|_{(1+\gamma)/2,1+\gamma;\cQ}:=[u]_{(1+\gamma)/2,1+\gamma;\cQ}+|u|_{0;\cQ}.$$
We denote $C^{(1+\gamma)/2,1+\gamma}$  to be the set of all bounded measurable functions $u$ for which $Du$ are bounded and continuous in $\cQ$ and $[u]_{(1+\gamma)/2,1+\gamma;\cQ}<\infty$. The function spaces $C^{(l+\gamma)/2,l+\gamma},l=2,3,\ldots,$ are defined
accordingly.

For $p\in(1,\infty)$, we denote
$$W_{p}^{1,2}(\cQ):=\{u: u, u_t, Du, D^2u\in L_{p}(\cQ)\}.$$
Set
$$\mathbb{H}_{p}^{-1}(\cQ):=\bigg\{u: u=\sum_{|\alpha|\leq1}D^{\alpha}u_{\alpha}, u_{\alpha}\in L_{p}(\cQ)\bigg\}$$
and
$$\|u\|_{\mathbb{H}_{p}^{-1}(\cQ)}:=\inf\bigg\{\sum_{|\alpha|\leq1}\|u_{\alpha}\|_{L_{p}(\cQ)}: u=\sum_{|\alpha|\leq1}D^{\alpha}u_{\alpha}\bigg\}.$$
The  solution space $\mathcal{H}_{p}^{1}(\cQ)$ is defined by
$$\mathcal{H}_{p}^{1}(\cQ):=\{u: u_{t}\in \mathbb{H}_{p}^{-1}(\cQ), D^{\alpha}u\in L_{p}(\cQ), 0\leq|\alpha|\leq1\},$$
$$\|u\|_{\mathcal{H}_{p}^{1}(\cQ)}:=\|u_{t}\|_{\mathbb{H}_{p}^{-1}(\cQ)}+\sum_{|\alpha|\leq1}\|D^{\alpha}u\|_{L_{p}(\cQ)}.$$
Denote $C_{0}^{\infty}([-1,0]\times\cD)$ to be  the collection of infinitely differentiable functions $u=u(t,x)$ with compact supports in $[-1,0]\times\cD$.
Set $\mathcal{\mathring{H}}_{p}^{1}((-1,0)\times\cD)$ to be the closure of $C_{0}^{\infty}([-1,0]\times\cD)$ in $\mathcal{H}_{p}^{1}((-1,0)\times\cD)$.

\subsection{Assumptions and auxiliary results}\label{Assumption}
Our objective is to prove the interior regularity of solutions by  establishing  the local  estimates. We will slightly abuse the notation to localize the problem by taking $\cQ$ to be a unit cylinder $Q_1^-:=(-1,0)\times B_1$. By suitable rotation and scaling, we may suppose that a finite number of subdomains lie in $B_{1}$ and that they can be represented by
$$x^{d}=h_{j}(x'),\quad\forall~x'\in B'_{1},~j=1,\ldots,m<M,$$
where
\begin{equation}\label{eqhj}
-1<h_{1}(x')<\dots<h_{m}(x')<1,
\end{equation}
$h_{j}(x')\in C^{s+1+\mu}(B'_{1})$ with $s\in\mathbb N$. Set $h_{0}(x')=-1$ and $h_{m+1}(x')=1$. Then we have $m+1$ regions:
$$\cD_{j}:=\{x\in \cD: h_{j-1}(x')<x^{d}<h_{j}(x')\},\quad1\leq j\leq m+1.$$

For $j=1,\ldots,m$, the normal direction of the interfacial boundary $\Gamma_j=\{x^d=h_j(x')\}$ is given by
\begin{equation}\label{normal}
n_j:=(n_j^1,\ldots,n_j^d)=(1+|D_{x'}h_j(x')|^2)^{-1/2}(-D_{x'}h_j(x'),1)^\top\in \bR^{d}.
\end{equation}
For each $k=1,\ldots,d-1$, we define a vector field $\ell^{k,0}:\bR^{d}\to \bR^d$ near $0$ as follows:
$$
\ell^{k,0}_k=1,\quad \ell^{k,0}_i=0,\,\,i\neq k,d,
$$
\begin{equation}\label{elld}
\ell^{k,0}_d=\left\{
\begin{aligned}
D_kh_m(x'),\quad&x^d\ge h_m,\\
\frac {x^d-h_{j-1}}{h_{j}-h_{j-1}}D_kh_j(x')+\frac {h_{j}-x^d}{h_{j}-h_{j-1}}
D_kh_{j-1}(x'),\quad&h_{j-1}\le x^d< h_j,\\
D_kh_1(x'),\quad&x^d< h_1.
\end{aligned}
\right.
\end{equation}
It is easily seen that  on $\Gamma_j$, $\ell_d^{k,0}=D_kh_{j}(x')$
so that $\ell^{k,0}$ is in a tangential direction. Moreover, it follows from $h_j\in C^{s+1+\mu}$ that $\ell^{k,0}$ is $C^{s+\mu}$ on $\Gamma_j$.
Define the projection operator by
$$\mbox{proj}_{a}b=\frac{\langle a,b\rangle}{\langle a,a\rangle}a,$$
where $\langle a,b\rangle$ denotes the inner product of the vectors $a$ and $b$, and $\langle a,a\rangle=|a|^{2}$. Then we  make the vector field orthogonal to each other by using the Gram-Schmidt process:
\begin{equation}\label{defell}
\begin{split}
\tilde\ell^{1}&=\ell^{1,0},
\quad\ell^1=\frac{\tilde\ell^{1}}{|\tilde\ell^{1}|},\\
\tilde\ell^{2}&=\ell^{2,0}
-\mbox{proj}_{\ell^{1}}\ell^{2,0},
\quad\ell^2=\frac{\tilde\ell^{2}}{|\tilde\ell^{2}|},\\
&\vdots\\
\tilde\ell^{d-1}&=\ell^{d-1,0}-\sum_{j=1}^{d-2}
\mbox{proj}_{\ell^{j}}\ell^{d-1,0},\quad\ell^{d-1}=
\frac{\tilde\ell^{d-1}}{|\tilde\ell^{d-1}|}.
\end{split}
\end{equation}
From the definition of $\ell^{k,0}$, we define the corresponding unit normal direction which is orthogonal to $\ell^{k,0}$, $k=1,\ldots,d-1,$ (and thus also $\ell^k$):
\begin{equation}\label{defnorm}
n(x)=(n^1,\ldots,n^d)^{\top}=\big(1+\sum_{k=1}^{d-1}(\ell_d^{k,0})^2\big)^{-1/2}(-\ell_d^{1,0},\ldots,-\ell_d^{d-1,0},1)^{\top}.
\end{equation}
Obviously, $n(x)=n_j$ on $\Gamma_j$.

\begin{lemma}\label{lemell}
Let $\ell^k$ be defined in \eqref{defell}, $k=1,\ldots,d-1$. Then  the following assertions hold.

(i) We have $\ell^{k}\in C^{1/2}(\cD)$ and the $C^{1/2}$ norms are independent of the distance between the subdomains.

(ii) It holds that $D_{\ell^{k_1}}D_{\ell^{k_2}}\cdots D_{\ell^{k_{s-1}}}\ell^{k_s}\in C^0(\cD)$, where  $k_{\tau}=1,\ldots,d-1$, $\tau=1,\ldots,s$, and $s\in\mathbb N$ with $s\geq2$. Moreover, the $C^0$ norms are independent of the distance between the subdomains.

(iii) We have $|D\ell^k|\leq N|h_j-h_{j-1}|^{-1/2}$ and $|DD_{\ell^{k_1}}D_{\ell^{k_2}}\cdots D_{\ell^{k_{s-1}}}\ell^{k_s}|\leq N|h_j-h_{j-1}|^{-1}$, where $N$ depends on the $C^{s+1}$ norms of $h_j$.
\end{lemma}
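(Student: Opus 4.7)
The key underlying ingredient is a Malgrange--Glaeser inequality for nonnegative $C^{1,1}$ functions: if $f \geq 0$ on $B'_{1}$ with bounded $C^{1,1}$ norm, then $|Df(x')|^{2} \leq N f(x')$ pointwise. Applied to the separation function $f_{j} := h_{j}-h_{j-1} > 0$, which lies in $C^{s+1+\mu}\subset C^{1,1}$ under the standing hypothesis $s\ge 1$, this gives
\begin{equation*}
|D_{k}h_{j}(x') - D_{k}h_{j-1}(x')| \leq N\,(h_{j}(x')-h_{j-1}(x'))^{1/2}
\end{equation*}
with $N$ depending only on the $C^{s+1+\mu}$ characteristics of the interfaces. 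All three claims will be derived from this inequality together with the explicit interpolation formula defining $\ell^{k,0}$.

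\emph{Proof of (iii), first bound, and of (i).} In the strip $\{h_{j-1}<x^{d}<h_{j}\}$, write $\ell^{k,0}_{d} = t\phi_{j} + (1-t)\phi_{j-1}$ with $t = (x^{d}-h_{j-1})/(h_{j}-h_{j-1})$ and $\phi_{i}=D_{k}h_{i}$. Then
\begin{equation*}
\partial_{d}\ell^{k,0}_{d} \;=\; \frac{\phi_{j}-\phi_{j-1}}{h_{j}-h_{j-1}} \;\leq\; N(h_{j}-h_{j-1})^{-1/2}
\end{equation*}
by the Glaeser bound, while $\partial_{i}\ell^{k,0}_{d}$ ($i<d$) decomposes into a uniformly bounded part (coming from $t\partial_{i}\phi_{j}+(1-t)\partial_{i}\phi_{j-1}$) and a singular part of the form $(\partial_{i}t)(\phi_{j}-\phi_{j-1})$ which is again $\le N(h_{j}-h_{j-1})^{-1/2}$ once both factors are estimated by Glaeser. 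This proves $|D\ell^{k,0}|\le N(h_{j}-h_{j-1})^{-1/2}$; the normalization $\ell^{k}=\tilde\ell^{k}/|\tilde\ell^{k}|$ inherits the same bound because $|\tilde\ell^{k}|$ is uniformly bounded above and below (the Gram--Schmidt construction makes the projection of $(\tilde\ell^{1},\ldots,\tilde\ell^{d-1})$ onto $\operatorname{span}(e_{1},\ldots,e_{d-1})$ lower-triangular with unit diagonal). For (i), continuity of $\ell^{k,0}$ across $\Gamma_{j}$ is immediate since $\ell^{k,0}_{d}|_{x^{d}=h_{j}(x')}=D_{k}h_{j}(x')$ from both sides. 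For $x, y$ in the same strip with $|x-y|\le h_{j}-h_{j-1}$, the gradient bound yields $|\ell^{k,0}(x)-\ell^{k,0}(y)|\le N|x-y|^{1/2}$; for larger separations or points in different strips, one interpolates through intermediate points on the interfaces, where $\ell^{k,0}$ restricts to a $C^{s+\mu}\subset C^{1/2}$ function of $x'$.

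\emph{Proof of (ii) and of (iii), second bound.} The heart of the argument is the following cancellation identity. A direct computation gives
\begin{equation*}
\partial_{k'}t \;=\; -\bigl((1-t)D_{k'}h_{j-1}+tD_{k'}h_{j}\bigr)/(h_{j}-h_{j-1}) \;=\; -\ell^{k',0}_{d}/(h_{j}-h_{j-1}),
\end{equation*}
so the singular contributions in $\partial_{k'}\ell^{k,0}_{d}$ and $\ell^{k',0}_{d}\,\partial_{d}\ell^{k,0}_{d}$ cancel exactly, yielding
\begin{equation*}
D_{\ell^{k',0}}\ell^{k,0}_{d} \;=\; \partial_{k'}\ell^{k,0}_{d} + \ell^{k',0}_{d}\partial_{d}\ell^{k,0}_{d} \;=\; t\,D_{k'}D_{k}h_{j}(x') + (1-t)\,D_{k'}D_{k}h_{j-1}(x').
\end{equation*}
The right-hand side has exactly the same interpolation structure as $\ell^{k,0}_{d}$ itself, so iterating $s-1$ times gives
\begin{equation*}
D_{\ell^{k_{1},0}}\cdots D_{\ell^{k_{s-1},0}}\ell^{k_{s},0}_{d} \;=\; t\,D_{k_{1}}\cdots D_{k_{s}}h_{j} + (1-t)\,D_{k_{1}}\cdots D_{k_{s}}h_{j-1},
\end{equation*}
which is bounded (by $h_{j}\in C^{s+1+\mu}$) and continuous across $\Gamma_{j}$ (the limit from either side matches the tangential $s$-th derivative of $h_{j}$ on $\Gamma_{j}$). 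Transferring this identity to the normalized fields $\ell^{k}$ uses the smoothness and bounded invertibility of the Gram--Schmidt coefficients; this proves (ii). For the second bound of (iii), one additional $\partial_{d}$ applied to the interpolation above produces $(D_{k_{1}}\cdots D_{k_{s}}h_{j} - D_{k_{1}}\cdots D_{k_{s}}h_{j-1})/(h_{j}-h_{j-1})$; since the numerator is merely bounded (Glaeser no longer applies at order $s$, because $D_{k_{1}}\cdots D_{k_{s}}(h_{j}-h_{j-1})$ need not be nonnegative), this yields $N|h_{j}-h_{j-1}|^{-1}$, and $\partial_{i}$ for $i<d$ admits the same bound via an analogous split.

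The principal technical obstacle is establishing and iterating the tangential cancellation identity, and then transferring it through the Gram--Schmidt normalization. At each order, a derivative $D_{\ell}$ a priori contributes a singular term of size $(h_{j}-h_{j-1})^{-1}$; the identity above shows that it is exactly cancelled by the term arising from differentiating the interpolation parameter $t$. Verifying this algebraic cancellation at every order---and checking that the normalization factor $|\tilde\ell^{k}|^{-1}$ interacts correctly with tangential differentiation so that the bounded structure survives---is the core of the proof.
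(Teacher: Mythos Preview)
Your proposal is correct and follows essentially the same approach as the paper: both arguments hinge on the Glaeser-type inequality $|D_k(h_j-h_{j-1})|\le N(h_j-h_{j-1})^{1/2}$ and on the exact tangential cancellation identity $D_{\ell^{k',0}}\ell_d^{k,0}=t\,D_{k'}D_k h_j+(1-t)\,D_{k'}D_k h_{j-1}$, which is then iterated to arbitrary order. The only cosmetic differences are that the paper proves (i) by a direct case-by-case estimate of the difference $\ell_d^{k,0}(x_1)-\ell_d^{k,0}(x_2)$ (splitting on whether $|x_1'-x_2'|$ exceeds the local gap) rather than via your gradient-plus-interface-interpolation sketch, and that the paper orders the parts (i), (ii), (iii) rather than deriving the gradient bound first; the substance is identical.
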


\begin{proof}
(i) We start with proving that $\ell_d^{k,0}$ is $C^{1/2}$ in the vertical direction $x^d$. For any two points $(x',x_1^d), (x',x_2^d)$ satisfying $h_{j-1}(x')\le x_i^d< h_j(x')$, $i=1,2$, we have
\begin{align*}
\ell^{k,0}_d(x',x_1^d)-\ell^{k,0}_d(x',x_2^d)=\frac{x_1^d-x_2^d}{h_j-h_{j-1}}D_{k}(h_j-h_{j-1}).
\end{align*}
It follows from $h_{j}\in C^{s+1+\mu}$ and $h_j>h_{j-1}$ that
\begin{equation}\label{Diff_Dh}
|D_kh_{j}(x')-D_{k}h_{j-1}(x')|\leq N|h_{j}(x')-h_{j-1}(x')|^{1/2}.
\end{equation}
See, for instance, \cite[(50)]{lv00}. This together with $|x_1^d-x_2^d|\leq h_j-h_{j-1}$ gives
\begin{align*}
|\ell^{k,0}_d(x',x_1^d)-\ell^{k,0}_d(x',x_2^d)|\leq N|x_1^d-x_2^d|^{1/2}.
\end{align*}
We continue to prove that $\ell_d^{k,0}$ is $C^{1/2}$ in $x'$. For any two points $(x'_1,x^d)$ and $(x'_2,x^d)$ with $h_{j-1}(x'_i)\leq x^d<h_j(x'_i)$, $i=1,2$,  we have
\begin{align}\label{diffeelldk0}
&\ell_d^{k,0}(x'_1,x^d)-\ell_d^{k,0}(x'_2,x^d)\nonumber\\
&=\frac {x^d-h_{j-1}(x'_1)}{(h_{j}-h_{j-1})(x'_1)}D_k(h_j-h_{j-1})(x'_1)+D_kh_{j-1}(x'_1)-\frac {x^d-h_{j-1}(x'_2)}{(h_{j}-h_{j-1})(x'_2)}D_k(h_j-h_{j-1})(x'_2)\nonumber\\
&\quad-D_kh_{j-1}(x'_2)\nonumber\\
&=D_kh_{j-1}(x'_1)-D_kh_{j-1}(x'_2)+\frac {x^d-h_{j-1}(x'_1)}{(h_{j}-h_{j-1})(x'_1)}\big(D_k(h_j-h_{j-1})(x'_1)-D_k(h_j-h_{j-1})(x'_2)\big)\nonumber\\
&\quad+D_k(h_j-h_{j-1})(x'_2)\left(\frac {x^d-h_{j-1}(x'_1)}{(h_{j}-h_{j-1})(x'_1)}-\frac {x^d-h_{j-1}(x'_2)}{(h_{j}-h_{j-1})(x'_2)}\right).
\end{align}
Now we estimate the last term in \eqref{diffeelldk0}. Without loss of generality, we assume that
\begin{equation}\label{hjhj-1}
|(h_j-h_{j-1})(x'_1)|=\sup_{r=0,1}|(h_j-h_{j-1})(rx'_1+(1-r)x'_2)|.
\end{equation}
If $|x'_1-x'_2|>|(h_j-h_{j-1})(x'_1)|$, then by using $h_{j-1}(x'_i)\leq x^d<h_j(x'_i)$ and \eqref{Diff_Dh}, $i=1,2$, we have
\begin{align*}
&\left|D_k(h_j-h_{j-1})(x'_2)\left(\frac {x^d-h_{j-1}(x'_1)}{(h_{j}-h_{j-1})(x'_1)}-\frac {x^d-h_{j-1}(x'_2)}{(h_{j}-h_{j-1})(x'_2)}\right)\right|\\
&\leq 2|D_k(h_j-h_{j-1})(x'_2)|\leq N|(h_j-h_{j-1})(x'_2)|^{1/2}\leq N|x'_1-x'_2|^{1/2}.
\end{align*}
If $|x'_1-x'_2|\leq|(h_j-h_{j-1})(x'_1)|$, then by \eqref{Diff_Dh} and \eqref{hjhj-1}, we derive
\begin{align}\label{lastterm2}
|D_k(h_j-h_{j-1})(x'_2)|\leq N|(h_j-h_{j-1})(x'_2)|^{1/2}\leq N|(h_j-h_{j-1})(x'_1)|^{1/2}.
\end{align}
Using $h_j\in C^{s+1+\mu}$, we obtain
\begin{align}\label{lastterm}
&\left|\frac {x^d-h_{j-1}(x'_1)}{(h_{j}-h_{j-1})(x'_1)}-\frac {x^d-h_{j-1}(x'_2)}{(h_{j}-h_{j-1})(x'_2)}\right|\notag\\
&=\Bigg|\frac {h_{j-1}(x'_2)-h_{j-1}(x'_1)}{(h_{j}-h_{j-1})(x'_1)}
+\frac{(x^d-h_{j-1}(x'_2))\big((h_{j}-h_{j-1})(x'_2)-(h_{j}-h_{j-1})(x'_1)\big)}{(h_{j}-h_{j-1})(x'_1)
\cdot(h_{j}-h_{j-1})(x'_2)}\Bigg|\nonumber\\
&\leq \frac{N|x'_1-x'_2|}{(h_{j}-h_{j-1})(x'_1)}.
\end{align}
Combining  \eqref{lastterm2} and \eqref{lastterm}, we deduce
\begin{align*}
&\left|D_k(h_j-h_{j-1})(x'_2)\left(\frac {x^d-h_{j-1}(x'_1)}{(h_{j}-h_{j-1})(x'_1)}-\frac {x^d-h_{j-1}(x'_2)}{(h_{j}-h_{j-1})(x'_2)}\right)\right|\\
&\leq \frac{N|x'_1-x'_2|}{|(h_j-h_{j-1})(x'_1)|^{1/2}}\leq N|x'_1-x'_2|^{1/2}.
\end{align*}
Therefore, coming back to \eqref{diffeelldk0}, we have
$$|\ell_d^{k,0}(x'_1,x^d)-\ell_d^{k,0}(x'_2,x^d)|\leq N|x'_1-x'_2|^{1/2}.$$
We hence conclude that $\ell_d^{k,0}$ is $C^{1/2}$ in $\cD$. Combining with the definition of $\ell^k$ in \eqref{defell},  we derive the $C^{1/2}$-regularity of $\ell^k$.
	
(ii) In view of  \eqref{elld}, a  direct calculation gives
\begin{align}\label{Dk'ell}
D_{k_1}\ell_d^{k_2,0}&=\frac{D_{k_1}h_{j-1}D_{k_2}(h_{j-1}-h_j)}{h_j-h_{j-1}}-\frac{(x^d-h_{j-1})D_{k_2}(h_j-h_{j-1})D_{k_1}(h_j-h_{j-1})}{(h_j-h_{j-1})^2}\nonumber\\
&\quad +\frac {x^d-h_{j-1}}{h_{j}-h_{j-1}}D_{k_1}D_{k_2}h_j+\frac {h_{j}-x^d}{h_{j}-h_{j-1}}D_{k_1}D_{k_2}h_{j-1},
\end{align}
and
\begin{equation}\label{Ddell}
D_d\ell_d^{k_2,0}=\frac{D_{k_2}(h_j-h_{j-1})}{h_j-h_{j-1}},
\end{equation}
where $h_{j-1}\le x^d< h_j$. Since
$$\ell_d^{k_1,0}=\frac{(x^d-h_{j-1})D_{k_1}(h_j-h_{j-1})}{h_j-h_{j-1}}+D_{k_1}h_{j-1},\quad h_{j-1}\le x^d< h_j,$$
we have
\begin{align*}
D_{k_1}\ell_d^{k_2,0}+\ell_d^{k_1,0}D_d\ell_d^{k_2,0}=\frac {x^d-h_{j-1}}{h_{j}-h_{j-1}}D_{k_1}D_{k_2}h_j+\frac {h_{j}-x^d}{h_{j}-h_{j-1}}D_{k_1}D_{k_2}h_{j-1},
\end{align*}
when $h_{j-1}\le x^d< h_j$.
Therefore, we obtain
\begin{align*}
D_{\ell^{k_1,0}}\ell_d^{k_2,0}&=D_{k_1}\ell_d^{k_2,0}+\ell_d^{k_1,0}D_d\ell_d^{k_2,0}\nonumber\\
&=
\begin{cases}
D_{k_1}D_{k_2}h_m(x'),&\quad x^d\ge h_m,\\
\frac {x^d-h_{j-1}}{h_{j}-h_{j-1}}D_{k_1}D_{k_2}h_j(x')+\frac {h_{j}-x^d}{h_{j}-h_{j-1}}
D_{k_1}D_{k_2}h_{j-1}(x'),&\quad h_{j-1}\le x^d< h_j,\\
D_{k_1}D_{k_2}h_1(x'),&\quad x^d< h_1.
\end{cases}
\end{align*}
This together with \eqref{eqhj} implies that $D_{\ell^{k_1,0}}\ell_d^{k_2,0}\in C^0(\cD)$ and the $C^0$ norm is independent of the distance between the subdomains.
From this, and using  the definition of $\ell^{k}$ in \eqref{defell} and the fact that $\tilde \ell^{k}$ is a linear combination of $\ell^{j,0}$, $j=1,\ldots,k$, we also deduce that
\begin{equation*}
D_{\ell^{k_1}}\ell^{k_2}\in C^0(\cD).
\end{equation*}
By an induction argument, we conclude
\begin{align}\label{Dellelldkk}
&D_{\ell^{k_1,0}}D_{\ell^{k_2,0}}\cdots D_{\ell^{k_{s-1},0}}\ell_d^{k_s,0}\nonumber\\
&=
\begin{cases}
D_{k_1}D_{k_2}\cdots D_{k_s}h_m(x'),&\,\, x^d\ge h_m,\\
\frac {x^d-h_{j-1}}{h_{j}-h_{j-1}}D_{k_1}D_{k_2}\cdots D_{k_s}h_j(x')+\frac {h_{j}-x^d}{h_{j}-h_{j-1}}
D_{k_1}D_{k_2}\cdots D_{k_s}h_{j-1}(x'),&\,\, h_{j-1}\le x^d< h_j,\\
D_{k_1}D_{k_2}\cdots D_{k_s}h_1(x'),&\,\, x^d< h_1,
\end{cases}
\end{align}
and thus, similarly, $D_{\ell^{k_1}}D_{\ell^{k_2}}\cdots D_{\ell^{k_{s-1}}}\ell^{k_s}\in C^0(\cD)$.

(iii) By virtue of \eqref{Dk'ell}, \eqref{Ddell},  and \eqref{Diff_Dh}, we have  in $B_{r}(x_0)\cap\cD_{j}$,
\begin{equation*}
|D\ell_d^{k,0}|\leq N|h_j-h_{j-1}|^{-1/2}.
\end{equation*}
Then recalling the definition of $\ell^k$ in \eqref{defell}, this also holds for $\ell^k$. Similarly, from \eqref{Dellelldkk}, we have
\begin{equation*}
|DD_{\ell^{k_1,0}}D_{\ell^{k_2,0}}\cdots D_{\ell^{k_{s-1},0}}\ell_d^{k_s,0}|\leq N|h_j-h_{j-1}|^{-1},
\end{equation*}
and thus
\begin{equation*}
|DD_{\ell^{k_1}}D_{\ell^{k_2}}\cdots D_{\ell^{k_{s-1}}}\ell^{k_s}|\leq N|h_j-h_{j-1}|^{-1}.
\end{equation*}
The lemma is proved.
\end{proof}

\subsection{Coordinate systems.}
        \label{sec 2.3}
In the proofs, we will use different coordinate systems associated with different points defined as follows.

We fix a coordinate system such that $0\in\cD_{i_0}$ for some $i_0\in\{1,\ldots,m+1\}$ and the closest point on $\partial\cD_{i_0}$ is $x_{i_0}=(0',h_{i_0}(0'))$, and $\nabla_{x'}h_{i_0}(0')=0'$. Throughout the paper, we shall use $x=(x',x^d)$ and $D_{x}$ to denote the point and the derivatives, respectively, in this coordinate system. Then at the point $x_{i_0}$, $\ell^{k}=(0,\ldots,0,1,0,\ldots,0)^{\top}$ and $n=(0',1)^{\top}$. See Figure \ref{coordinates}.

For any point $x_0\in B_{3/4}\cap \cD_{j_{0}}$, $j_0=1,\ldots,m+1$, suppose  the closest point on $\partial \cD_{j_{0}}$ to $x_0$ is $y_0:=(y'_0,h_{j_{0}}(y'_0))$.  Let
\begin{equation}\label{defny0}
n_{y_0}=(n^1_{y_0},\ldots, n_{y_0}^d)^{\top}=\big(1+|\nabla_{x'}h_{j_0}(y'_0)|^{2})^{-1/2}
\big(-\nabla_{x'}h_{j_0}(y'_0),1\big)^{\top}
\end{equation}
be the unit normal vector at $(y'_0,h_{j_0}(y'_0))$ on the surface $\Gamma_{j_0}$. Define the tangential vectors by
\begin{equation}\label{deftauk}
\tau_k=\ell^{k}(y_0),\quad k=1,\ldots,d-1,
\end{equation}
where $\ell^{k}$ is defined in \eqref{defell}. See Figure \ref{coordinates}.

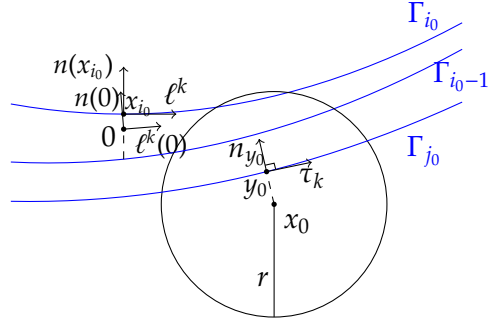
\begin{figure}
\begin{tikzpicture}
\fill (0,0.5) circle (1pt) node at (-0.2,0.4) {$0$};
\draw [->] (0,0.5) -- (-0.04,1) node at (-0.35,0.9) {{\small$n(0)$}};
\draw [->] (0,0.5) -- (0.5,0.54) node at (0.5,0.3) {{\small$\ell^k(0)$}};
\draw[blue,domain=-1.5:4.5] plot(\x,0.05*\x*\x+0.1*\x+0.1) node at (4.5,1.2){$\Gamma_{i_0-1}$};
\draw[blue,domain=-1.5:4.5] plot(\x,0.06*\x*\x+0.7) node at (4,2){$\Gamma_{i_0}$};
\fill (0,0.7) circle (1pt) node at (0.2,0.8) {{\small$x_{i_0}$}};
\draw [dashed]  (0,0.1) -- (0,0.5);
\draw [->] (0,0.7) -- (0,1.33) node[left] {{\small$n(x_{i_0})$}};
\draw [->] (0,0.7) -- (0.7,0.7) node[above] {$\ell^k$};
\fill (2,-0.5) circle (1pt) node[below right] {$x_0$};
\draw (2,-0.5) circle (1.5);
\draw [-] (2,-0.5) -- (2,-2) node at (1.85,-1.5){$r$};
\draw[blue,domain=-1.5:4.5] plot(\x,0.04*\x*\x+0.1*\x-0.4) node at (4,0.2){$\Gamma_{j_0}$};
\fill (1.9,-0.0656) circle (1pt) node at (1.75,-0.3) {{\small$y_0$}};
\draw [dashed] (2,-0.5) -- (1.9,-0.0656);
\draw [->] (1.9,-0.0656) -- (2.5,0.06) node at (2.5,-0.15) {$\tau_k$};
\draw [->] (1.9,-0.0656) -- (1.8,0.3688) node at (1.65,0.16) {$n_{y_0}$};
\draw [-] (1.88,0.02) -- (2,0.047) -- (2.02,-0.03);
\end{tikzpicture}
\vspace*{0.1mm}
\caption{Illustration of two coordinate systems}
\label{coordinates}
\end{figure}

In the coordinate system associated with $x_0$ with the axes parallelled to $n_{y_0}$ and $\tau_k,k=1,\ldots,d-1$, we will use $y=(y',y^d)$ and $D_{y}$ to denote the point and the derivatives, respectively. Moreover, we have $y=\Lambda x$, where $\Lambda=(\Lambda^1,\ldots,\Lambda^d)^\top=(\Lambda^{\alpha\beta})_{\alpha,\beta=1}^{d}$ is a $d\times d$ matrix representing the linear transformation from the coordinate system associated with $0$ to the coordinate system associated with $x_0$, and $\tau_k=(\Gamma^{1k},\dots,\Gamma^{dk})^\top,k=1,\ldots,d-1$, $n_{y_0}=(\Gamma^{1d},\dots,\Gamma^{dd})^\top$, where $\Gamma=\Lambda^{-1}$.
Now we introduce  $m+1$ ``strips" (in the $y$-coordinates)
$$\Omega_j:=\{y\in\cD: y_{j-1}^d<y^{d}<y_j^d\},\quad j=1,\ldots,m+1,$$
where $y_j:=(\Lambda'y_0,y_j^d)\in \Gamma_j$ and  $\Lambda'=(\Lambda^1,\ldots,\Lambda^{d-1})^\top$.
We also have for any $0<r\leq1/4$,
\begin{equation}\label{volume}
|(\cD_{j}\setminus\Omega_{j})\cap (B_{r}(\Lambda x_0))|\leq Nr^{d+1/2},\quad j=1,\ldots,m+1.
\end{equation}
See, for instance, \cite[Lemma 2.3]{dx2019}.

For a piecewise H\"{o}lder continuous function $f\in C^{\delta/2,\delta}((-1+\varepsilon,0)\times\overline{{\cD}_{j}})$, we define
\begin{align}\label{piecewisecont}
\bar{f}(t,y)=\fint_{Q_r^-(\Lambda z_0)\cap((-1+\varepsilon,0)\times\cD_j)}f(s,z)\, dz\,ds,
\,\, (t,y)\in Q_r^-(\Lambda z_0)\cap((-1+\varepsilon,0)\times\Omega_{j})
\end{align}
to be the corresponding piecewise constant function in $Q_{r}^-(\Lambda z_0):=(t_0-r^2/4,t_0)\times B_{r}(\Lambda x_0)$.
Note that $\bar f$ only depends on $y^d$ and we will denote $\bar f(y^d):=\bar f(t,y)$ for abbreviation. Finally, by using \eqref{volume}, we have
\begin{align}\label{estAbarf}
\|f-\bar f\|_{L_1(Q_{r}^-(\Lambda z_0))}&
\leq\sum_{j=1}^{m+1}\|f-\bar f\|_{L_1(Q_{r}^-(\Lambda z_0)\cap(-1+\varepsilon,0)\times(\cD_j\cap\Omega_j))}\nonumber\\
&\quad+\sum_{j=1}^{m+1}\|f-\bar f\|_{L_1(Q_{r}^-(\Lambda z_0)\cap(-1+\varepsilon,0)\times(\cD_j\setminus\Omega_j))}\leq Nr^{d+5/2}.
\end{align}

\section{Second derivative estimates}\label{C2delta}
We present a complete proof of Theorem \ref{Mainthm} with $s=1$ in this section. As in Section \ref{Assumption}, we will take $\cQ=(-1,0)\times B_1$. The equation \eqref{systems} is equivalent to a (homogeneous) transmission problem
\begin{align*}
\begin{cases}
-u_t+D_\alpha(A^{\alpha\beta}D_\beta u)=D_\alpha f^\alpha \qquad \text{in}\,\,\bigcup_{j=1}^{m+1}(-1,0)\times\cD_j, \\
u|_{(-1,0)\times\Gamma_j}^+=u|_{(-1,0)\times\Gamma_j}^-,\quad[n^\alpha_j(A^{\alpha\beta} D_\beta u -f^\alpha)]_{(-1,0)\times\Gamma_j}=0,\quad j=1,\ldots,m,
\end{cases}
\end{align*}
where
$$
[n_j^\alpha(A^{\alpha\beta} D_\beta u -f^\alpha)]_{(-1,0)\times\Gamma_j}:=n_j^\alpha(A^{\alpha\beta} D_\beta u -f^\alpha)|_{(-1,0)\times\Gamma_j}^+-n_j^\alpha(A^{\alpha\beta} D_\beta u -f^\alpha)|_{(-1,0)\times\Gamma_j}^-,
$$
$n_j$ is the unit normal vector on $\Gamma_j$ defined in \eqref{normal}, $u|_{(-1,0)\times\Gamma_j}^+$ and $u|_{(-1,0)\times\Gamma_j}^-$ ($(A^{\alpha\beta} D_\beta u -f^\alpha)|_{(-1,0)\times\Gamma_j}^+$ and $(A^{\alpha\beta} D_\beta u -f^\alpha)|_{(-1,0)\times\Gamma_j}^-$) are the left and right limits of $u$ (its conormal derivatives) on $(-1,0)\times\Gamma_j$, respectively, $j=1,\ldots,m$.

To show higher regularity, we take the directional derivative of $u$ in the direction $\ell:=\ell^k$,  $k=1,\ldots,d-1$, to get
\begin{align}\label{eq6.26}
\begin{cases}
-(D_\ell u)_t+D_\alpha(A^{\alpha\beta}D_\beta D_\ell u)=g+ D_\alpha f_1^\alpha \quad\text{in}\,\,\bigcup_{j=1}^{m+1}(-1,0)\times\cD_j, \\
D_\ell u|_{(-1,0)\times\Gamma_j}^+=D_\ell u|_{(-1,0)\times\Gamma_j}^-,\quad[n_j^\alpha (A^{\alpha\beta} D_\beta D_\ell u-f_1^\alpha)]_{(-1,0)\times\Gamma_j}=\tilde h_j,
\end{cases}
\end{align}
where
\begin{equation}\label{defg}
\begin{split}
g&=D_\alpha\ell(A^{\alpha\beta} D_\beta Du+DA^{\alpha\beta}D_\beta u-Df^\alpha),\\
f_1^\alpha&=D_\ell f^\alpha+A^{\alpha\beta}D_\beta \ell_i D_iu-D_{\ell} A^{\alpha\beta}D_\beta u,
\end{split}
\end{equation}
and
\begin{align}\label{deftildeh}
\tilde h_j=[D_\ell n_j^\alpha (-A^{\alpha\beta}D_\beta u+f^\alpha)]_{(-1,0)\times\Gamma_j}.
\end{align}
Note that $D_\ell n_j$ is a tangential direction on $\Gamma_j$ and we may write $\tilde h_j=\tilde h_j(t,x')$. Furthermore, by a direct calculation using \eqref{normal} and \eqref{defell},  we have $D_{\ell}n_j^\alpha\in C^{\mu}$ as a function of $x'$.

The equation \eqref{eq6.26} is also a transmission problem for $D_\ell u$, but is inhomogeneous. A difficulty is that $D\ell_d$ is singular at any point where two interfacial boundaries touch or are very close to each other. To cancel out the singularity, we consider
\begin{equation}\label{tildeu}
u_{_\ell}:=u_{_\ell}(z;z_0)=D_{\ell}u-u_0,
\end{equation}
where $z_0=(t_0,x_{0})\in (-9/16,0)\times (B_{3/4}\cap \overline{\cD_{j_{0}}})$,
\begin{align}\label{defu0}
u_0:=u_0(x;z_0)=\sum_{j=1}^{m+1}\tilde\ell_{i,j}D_iu(t_0,P_jx_0),
\end{align}
\begin{align}\label{Pjx}
P_jx_0=\begin{cases}
x_0&\quad\mbox{for}\quad x_0\in\overline{\cD_{j_0}},\\
(x'_0,h_j(x'_0))&\quad\mbox{for}\quad j<j_0,\\
(x'_0,h_{j-1}(x'_0))&\quad\mbox{for}\quad j>j_0,
\end{cases}
\end{align}
and the vector field
$\tilde\ell_{,j}:=(\tilde\ell_{1,j},\dots,\tilde\ell_{d,j})$ is a smooth extension of $\ell|_{\cD_j}$ to $\cup_{k=1,k\neq j}^{m+1}\cD_k$.
From \eqref{eq6.26}, we have
\begin{align}\label{eq6.59}
\begin{cases}
-\partial_tu_{_\ell}+D_\alpha(A^{\alpha\beta}D_\beta u_{_\ell})=g+ D_\alpha f_2^\alpha \quad\text{in}\,\,\bigcup_{j=1}^{m+1}(-1,0)\times\cD_j, \\
[n_j^\alpha (A^{\alpha\beta} D_\beta u_{_\ell}-f_2^\alpha)]_{(-1,0)\times\Gamma_j}=\tilde h_j,
\end{cases}
\end{align}
where
\begin{align*}
f_2^\alpha&:=f_2^\alpha(z;z_0)=f_1^\alpha-A^{\alpha\beta}\sum_{j=1}^{m+1}D_\beta \tilde\ell_{i,j}D_iu(t_0,P_jx_0)\nonumber\\
&=D_\ell f^\alpha-D_{\ell} A^{\alpha\beta}D_\beta u+A^{\alpha\beta}\left(D_\beta \ell_i D_i u-\sum_{j=1}^{m+1}D_\beta \tilde\ell_{i,j}D_iu(t_0,P_jx_0)\right).
\end{align*}
Now by solving a conormal boundary value problem (or simply adding a term
$$
\sum_{j=1}^{m}D_d(\mathbbm{1}_{x^d>h_j(x')} (n^d_j(x'))^{-1}\tilde h_j(t,x'))
$$
to the equation; see \cite{dx2021}), where $\mathbbm{1}_{\bullet}$ is  the indicator function, we can get rid of $\tilde h_j$ in the second equation of \eqref{eq6.59} and reduce the problem \eqref{eq6.59} to a homogeneous transmission problem:
\begin{align}\label{homosecond}
\begin{cases}
-\partial_tu_{_\ell}+D_\alpha(A^{\alpha\beta}D_\beta u_{_\ell})=g+ D_\alpha f_3^\alpha \quad\text{in}\,\,\bigcup_{j=1}^{m+1}(-1,0)\times\cD_j, \\
[n_j^\alpha (A^{\alpha\beta} D_\beta u_{_\ell}-f_3^\alpha)]_{(-1,0)\times\Gamma_j}=0,
\end{cases}
\end{align}
where
\begin{align}\label{tildef1}
f_3^\alpha:=f_3^\alpha(z;z_0)&=f_2^\alpha+\delta_{\alpha d}\sum_{j=1}^{m}\mathbbm{1}_{x^d>h_j(x')} (n^d_j(x'))^{-1}\tilde h_j(t,x')\nonumber\\
&=D_\ell f^\alpha-D_{\ell} A^{\alpha\beta}D_\beta u+A^{\alpha\beta}\big(D_\beta \ell_i D_i u-\sum_{j=1}^{m+1}D_\beta \tilde\ell_{i,j}D_iu(t_0,P_jx_0)\big)\nonumber\\
&\quad+\delta_{\alpha d}\sum_{j=1}^{m}\mathbbm{1}_{x^d>h_j(x')} (n^d_j(x'))^{-1}\tilde h_j(t,x'),
\end{align}
where $\delta_{\alpha d}=1$ if $\alpha=d$, and $\delta_{\alpha d}=0$ if $\alpha\neq d$.

To prove piecewise regularity of $u_{_\ell}$, we need to show the mean oscillation of $Du_{\ell}$ in cylinders vanishes in a certain  order as the radii of the cylinders go to zero, so that Campanato's approach in  \cite{c1963,g1983} can be applied. However, we note that the mean oscillation of
$$A^{\alpha\beta}\big(D_\beta \ell_i D_i u-\sum_{j=1}^{m+1}D_\beta \tilde\ell_{i,j}D_iu(t_0,P_jx_0)\big)$$
in \eqref{tildef1} is only bounded. To this end, we choose a cut-off function $\zeta\in C_{0}^\infty(B_1)$ satisfying
$$0\leq\zeta\leq1,\quad\zeta\equiv 1~\mbox{in}~B_{3/4},\quad|D\zeta|\leq8.$$
Denote
\begin{equation}\label{mathcalA}
\boldsymbol{A}^{\alpha\beta}:=\zeta A^{\alpha\beta}+\nu(1-\zeta)\delta_{\alpha\beta}\delta_{ij}.
\end{equation}
For $j=1,\ldots,m+1$, denote $\cD_j^c:=\cD\setminus\cD_j$. Let ${\mathfrak u}_j(\cdot;z_0)\in\mathcal{H}_p^1(Q_1^-)$ be the weak solution of the problem
\begin{align}\label{eq-rmu}
\begin{cases}
-\partial_t{\mathfrak u}_j(\cdot;z_0)+D_{\alpha}(\boldsymbol{A}^{\alpha\beta}D_\beta {\mathfrak u}_j(\cdot;z_0))=-D_{\alpha}(\mathbbm{1}_{_{(-1,0)\times\cD_j^c}}A^{\alpha\beta}D_\beta \tilde\ell_{i,j}D_iu(t_0,P_jx_0))&\,\, \mbox{in}~Q_1^-,\\
{\mathfrak u}_j(\cdot;z_0)=0&\,\, \mbox{on}~\partial_pQ_1^-,
\end{cases}
\end{align}
where $1<p<\infty$. The solvability of \eqref{eq-rmu} follows from Lemma \ref{solvability}.
Furthermore, we have
\begin{align}\label{rmuj}
\|{\mathfrak u}_j(\cdot;z_0)\|_{\mathcal{H}_p^1(Q_1^-)}&\leq N\|\mathbbm{1}_{_{(-1,0)\times\cD_j^c}}A^{\alpha\beta}D_\beta \tilde\ell_{i,j}D_iu(t_0,P_jx_0)\|_{L_p(Q_1^-)}\nonumber\\
&\leq N\big(\|Du\|_{L_{1}(\cQ)}+\sum_{j=1}^{M}|f|_{(1+\delta)/2,1+\delta;(-1,0)\times\overline{\cD_{j}}}\big),
\end{align}
where we used the local boundedness estimate of $Du$ in Lemma \ref{lemlocbdd}.
Since $\mathbbm{1}_{_{(-1,0)\times\cD_j^c}}D_\beta \tilde\ell_{,j}$ is piecewise $C^{\mu}$, from Lemma \ref{lemlocbdd}, we obtain
$$
{\mathfrak u}_j(\cdot;z_0)\in C^{(1+\mu')/2,1+\mu'}((-1+\varepsilon,0)\times(\overline{\cD_i}\cap B_{1-\varepsilon})),
$$
where $\mu':=\min\{\mu,\frac{1}{2}\}$ and $i=1,\ldots,m+1$. Thus, combining Lemma \ref{lemlocbdd} and \eqref{rmuj}, we get for $i=1,\ldots,m+1$,
\begin{align}\label{eq9.46}
&\|{\mathfrak u}\|_{L_\infty(Q_{1/4}^-)}+|{\mathfrak u}|_{(1+\mu')/2,1+\mu';(-1+\varepsilon,0)\times(\overline{\cD_i}\cap B_{1-\varepsilon})}\nonumber\\
&\leq N\big(\sum_{j=1}^{m+1}\|D{\mathfrak u}_j\|_{L_{1}(\cQ)}+\sum_{j=1}^{m+1}|\mathbbm{1}_{_{(-1,0)\times\cD_j^c}}A^{\alpha\beta}D_\beta \tilde\ell_{i,j}D_iu(t_0,P_jx_0)|_{\mu/2,\mu;(-1,0)\times\overline{\cD_{j}}}\big)\nonumber\\
&\le N\big(\|Du\|_{L_{1}(\cQ)}+\sum_{j=1}^{M}|f|_{(1+\delta)/2,1+\delta;(-1,0)\times\overline{\cD_{j}}}\big),
\end{align}
where
\begin{equation}\label{def-v}
{\mathfrak u}:={\mathfrak u}(\cdot;z_0)=\sum_{j=1}^{m+1}{\mathfrak u}_j(\cdot;z_0).
\end{equation}

Now we define
\begin{equation}\label{def-tildeu}
\tilde u:=\tilde u(z;z_0)=u_{_\ell}-{\mathfrak u}=D_{\ell}u-u_0-{\mathfrak u}.
\end{equation}
Then $\tilde u$ satisfies
\begin{equation}\label{eqtildeu}
-\tilde u_t+D_\alpha(A^{\alpha\beta}D_\beta \tilde u)=g+ D_\alpha \tilde f^\alpha\quad\mbox{in}~Q_{3/4}^-,
\end{equation}
where
\begin{equation}\label{def-tildefalpha}
\tilde f^\alpha:=\tilde f^\alpha(z;z_0)=f_3^\alpha+A^{\alpha\beta}\sum_{j=1}^{m+1}\mathbbm{1}_{_{(-1,0)\times\cD_j^c}}D_\beta \tilde\ell_{i,j}D_iu(t_0,P_jx_0),
\end{equation}
where $f_3^\alpha$ is defined in \eqref{tildef1}.  The mean oscillation of $\tilde f^\alpha$ vanishes at a certain  rate as the radii of the cylinders go to zero; the details can be found in the proof of \eqref{estF} below. Therefore, we deduce from \eqref{eq9.46} and \eqref{def-tildeu} that to prove  piecewise regularity of $u_{_\ell}$, we only need to prove that of $\tilde u$.

Denote
\begin{equation}\label{deftildeU}
\tilde U:=\tilde U(z;z_0)=n^\alpha(A^{\alpha\beta}D_\beta \tilde u-\tilde f^\alpha),
\end{equation}
where
$n^\alpha$ is defined in \eqref{defnorm}, $\alpha=1,\ldots,d$. The rest part of this section is devoted to deriving  piecewise $C^{\mu'}$-continuity of $D_{\ell^{k'}}\tilde u$ and $\tilde U$, where $k'=1,\ldots,d-1$, and $\mu'=\min\big\{\frac{1}{2},\mu\big\}$. For this, we will prove the following proposition.
\begin{proposition}\label{proptildeu}
Let $\varepsilon\in(0,1)$ and $p\in(1,\infty)$. Suppose that $A^{\alpha\beta}$ and $f^\alpha$ satisfy Assumption \ref{assump} with $s=1$. If $u\in \mathcal{H}_{p}^{1}((-1,0)\times B_{1})$ is a weak solution to
$$
-u_{t}+D_{\alpha}(A^{\alpha\beta}D_{\beta}u)=D_{\alpha}f^\alpha\quad\mbox{in}~ (-1,0)\times B_{1},
$$
then the following assertions hold.

\begin{enumerate}
\item For any $z_{0}, z_1\in  (-1+\varepsilon,0)\times B_{1-\varepsilon}$,  we have
\begin{align}\label{holdertildeuU}
&|(D_{\ell^{k'}}\tilde u(z_{0};z_0)-D_{\ell^{k'}}\tilde u(z_{1};z_1)|+|\tilde U(z_{0};z_0)-\tilde U(z_{1};z_1)|\nonumber\\
&\leq N|z_{0}-z_{1}|_p^{\mu'}\Big(\|Du\|_{L_{1}((-1,0)\times B_{1})}+\sum_{j=1}^{M}|f|_{(1+\delta)/2,1+\delta;(-1,0)\times \overline{\cD_{j}}}\Big),
\end{align}
where  $\tilde u$ and $\tilde U$ are defined in \eqref{def-tildeu} and \eqref{deftildeU}, respectively, $\mu'=\min\big\{\frac{1}{2},\mu\big\}$, $N$ depends on $n,d,m,p,\nu,\varepsilon$, $|A|_{(1+\delta)/2,1+\delta;(-1,0)\times\overline{\cD_{j}}}$, and the $C^{2+\mu}$ characteristic of $\cD_{j}$.
\item For $j=1,\ldots,m+1$, it  holds that $u\in C^{1+\mu'/2,2+\mu'}((-1+\varepsilon,0)\times (B_{1-\varepsilon}\cap\overline{{\cD}_{j}}))$.
\end{enumerate}
\end{proposition}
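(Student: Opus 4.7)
The plan is to prove both assertions via Campanato's characterization of Hölder spaces, adapted to the transmission setting. For each reference point $z_0$ in $(-1+\varepsilon,0)\times B_{1-\varepsilon}$, and each scale $r\in(0,1/4)$, I will study the quantity
\begin{equation*}
\phi(z_0,r):=\inf_{\mathbf q^{k'},\mathbf Q\in\bR^n}\left(\fint_{Q_r^-(z_0)}\bigl(|D_{\ell^{k'}}\tilde u(\cdot;z_0)-\mathbf q^{k'}|^{1/2}+|\tilde U(\cdot;z_0)-\mathbf Q|^{1/2}\bigr)\,dz\right)^2,
\end{equation*}
and establish the decay bound $\phi(z_0,r)\le Nr^{\mu'}\cM$, where $\cM$ denotes the right-hand side in \eqref{holdertildeuU}. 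The choice of the exponent $1/2$ is forced by the weak type-$(1,1)$ machinery used below. The Campanato-type argument will then convert this decay into the pointwise Hölder estimate in assertion (1).

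To obtain the decay of $\phi(z_0,r)$, I will first pass to the rotated coordinates $y=\Lambda x$ attached to $x_0$, in which the interfaces are flat to leading order, and work with $\tilde{\mathfrak u}$, $\mathcal A^{\alpha\beta}$, and $\tilde{\mathfrak f}^\alpha$. In these coordinates, the natural functional is the one displayed in \eqref{funcional}, involving only the tangential derivatives $D_{y^{k'}}\tilde{\mathfrak u}$ and the flux $\mathcal A^{d\beta}D_{y^\beta}\tilde{\mathfrak u}-\tilde{\mathfrak f}^d$; these are precisely the two quantities that are continuous across the flattened interfaces, which is what makes a Campanato decomposition viable. I will split $\tilde{\mathfrak u}$ into (a) a solution to a transmission problem whose coefficients and data have been frozen in the tangential variables (depending only on $y^d$) and whose data agrees with $\tilde{\mathfrak f}$ on averages in strips, and (b) a correction term that solves an equation whose right-hand side is controlled by the oscillations of $\mathcal A^{\alpha\beta}$, $\tilde{\mathfrak f}^\alpha$, and of $g$. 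For part (a), the coefficients depend on one variable so the weak type-$(1,1)$ estimate for parabolic systems with measurable-in-one-variable coefficients applies and yields decay of the relevant mean-oscillation functional. For part (b), the $C^{(1+\delta)/2,1+\delta}$ regularity of the data together with the estimate \eqref{estAbarf} on $\|f-\bar f\|_{L_1}$ and the bound \eqref{volume} for $|(\cD_j\setminus\Omega_j)\cap B_r|$ provide the extra gain needed. Combining these gives Proposition \ref{lemma iteraphi}.

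Next I will translate this back to the $x$-coordinates. The required difference estimates of $|D_{\ell^{k'}}\tilde u-D_{y^{k'}}\tilde{\mathfrak u}|$ and $|\tilde U-(\mathcal A^{d\beta}D_{y^\beta}\tilde{\mathfrak u}-\tilde{\mathfrak f}^d)|$ (to be formulated in \eqref{difference-coor}) are controlled by $r^{1/2}$ times Lipschitz-type norms, because the orthogonal matrix $\Lambda$ differs from the identity by $O(r)$ on $B_r(x_0)$ and by Lemma \ref{lemell}(i) the vector field $\ell^k$ itself is only $C^{1/2}$. Plugging this into the decay of \eqref{funcional} yields decay of $\phi(z_0,r)$. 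Iterating $r\mapsto \theta r$ along a geometric sequence of scales in the usual Campanato manner, and exploiting the fact that the reference point $z_0$ can be varied within $Q_r^-(z_0)$ with only $O(r^{\mu'})$ change in the quantities $u_0(\cdot;z_0)$ and ${\mathfrak u}(\cdot;z_0)$ (via Lemma \ref{lemlocbdd} and \eqref{eq9.46}), produces assertion (1).

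For assertion (2), I will combine (1) with the estimates already established for $u_0$ and ${\mathfrak u}$. The Hölder regularity of $D_{\ell^{k'}}\tilde u(z_0;z_0)$ in $z_0$, together with \eqref{def-tildeu}, \eqref{defu0}, and the piecewise $C^{(1+\mu')/2,1+\mu'}$ bound on ${\mathfrak u}$ in \eqref{eq9.46}, gives that $D_{\ell^{k'}}D_\ell u$ is piecewise $C^{\mu'/2,\mu'}$, which controls all tangential-tangential second derivatives. The pointwise Hölder bound on $\tilde U(z_0;z_0)$ then recovers the piecewise regularity of the conormal flux $n^\alpha A^{\alpha\beta}D_\beta u$; using the strong ellipticity to solve for the normal-normal second derivative and the equation \eqref{systems} to handle $u_t$, I obtain the full piecewise $C^{1+\mu'/2,2+\mu'}$ regularity of $u$. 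The main obstacle will be establishing Proposition \ref{lemma iteraphi}, since one must simultaneously handle the interfacial jumps, the $C^{1/2}$-only regularity of the vector field $\ell^k$, and the inhomogeneous source $g$, while keeping all constants independent of the distance between subdomains; Lemma \ref{lemell}(iii) shows that $|D\ell^k|$ can blow up like $|h_j-h_{j-1}|^{-1/2}$, so the cancellation built into the definition \eqref{tildeu}--\eqref{def-tildeu} must be exploited with care.
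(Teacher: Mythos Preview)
Your proposal tracks the paper's strategy closely: pass to rotated coordinates, freeze coefficients in $y^d$, use the weak type-$(1,1)$ estimate to control the correction, and iterate Campanato-style. However, there is a real gap in the logic as stated. You claim the decay bound $\phi(z_0,r)\le Nr^{\mu'}\cM$ with $\cM$ equal to the right-hand side of \eqref{holdertildeuU}. What actually comes out of the freezing argument (Proposition \ref{lemma iteraphi} and Lemma \ref{lemma itera}) is
\[
\Phi(z_0,\rho)\le N\Big(\frac{\rho}{r}\Big)^{\mu'}\Phi(z_0,r/2)+N\rho^{\mu'}\mathcal{C}_0,
\]
where $\mathcal{C}_0$ (see \eqref{defC0}) contains $\sum_j[Du]_{1/2,1;Q_r^-(z_0)\cap((-1+\varepsilon,0)\times\overline{\cD_j})}$, i.e.\ both $\|D^2u\|_{L_\infty}$ and the $C^{1/2}$-in-time seminorm of $Du$. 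You cannot pass directly from this to assertion (1); you must first close the a priori bounds on these quantities.

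The paper handles this in two separate steps that your outline omits. First, Lemma \ref{lemut} obtains $[Du]_{t,(1+\delta)/2}$ and piecewise $C^{1/4,1/2}$ regularity of $u_t$ by applying Lemma \ref{lemlocbdd} to the finite-difference equation \eqref{systemut}; this uses only the first-order theory and does \emph{not} presuppose control of $D^2u$. Second, Lemma \ref{lemma Dtildeu} feeds the decay estimate back into a pointwise bound for $|D^2u(z_0)|$ via Cramer's rule on the system \eqref{Dellk'tildeu}, \eqref{tildeU}, \eqref{Dalpbetau}, yielding
\[
|D^2u(z_0)|\le Nr^{\mu'}\sum_j\|D^2u\|_{L_\infty(Q_r^-(z_0)\cap\cdot)}+Nr^{-1}\cM,
\]
and then absorbs the first term by an iteration argument. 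Only after these two lemmas does $\mathcal{C}_0$ reduce to $N\cM$, at which point your Campanato argument goes through. Your sentence ``using the equation \eqref{systems} to handle $u_t$'' suggests deducing time regularity from $D^2u$, but the dependence runs the other way here; the finite-difference argument for $u_t$ must come first. You should also note, as the paper does, that the whole scheme is carried out first as an \emph{a priori} estimate under the qualitative assumption that $Du$ is piecewise $C^1$ in $x$ and $C^{1/2}$ in $t$, with the general case recovered by approximation.
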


The proof of Proposition \ref{proptildeu} is based on the idea in the proof of \cite[Proposition 4.2]{dx2021}, which is an adaptation of Campanato's method in  \cite{c1963,g1983}.  We shall first establish an a priori estimate of the modulus of continuity of $(D_{\ell^{k'}}\tilde u,\tilde U)$ by assuming that $Du$ is $C^{1/2}$ in $t$, and piecewise $C^1$ in $x$. The general case follows from an approximation argument together with  the technique of locally flattening the boundaries \cite[p. 2466]{dx2019}.

We next derive some auxiliary results in Sections \ref{auxilemma} and \ref{sublocalbound}, and then prove Proposition \ref{proptildeu} in Section \ref{prfprop}.

\subsection{Decay estimates}\label{auxilemma}
As in Section \ref{Assumption}, we take $z_0=(t_0,x_{0})\in (-9/16,0)\times (B_{3/4}\cap \cD_{j_{0}})$ and $r\in(0,1/4)$.

\begin{lemma}\label{lemmeanDu}
Let $\tilde u$ and $\tilde f^\alpha,\alpha=1,\ldots,d,$ be defined in \eqref{def-tildeu} and \eqref{def-tildefalpha}, respectively. Then  we have
\begin{align}\label{estfintDtildeu}
\fint_{Q_{r}^{-}(z_{0})}|D\tilde u|\,dz
\leq N\mathcal{C}_0,
\end{align}
and
\begin{align}\label{estfinttildef}
\fint_{Q_{r}^{-}(z_{0})}|\tilde f^\alpha|\,dz
\leq N\mathcal{C}_0,
\end{align}
where $N$ depends on $n,d,m,p,\nu,|A|_{(1+\delta)/2,1+\delta;(-1,0)\times\overline{\cD_{j}}}$, and the $C^{2+\mu}$ norm of $h_j$, and
\begin{align}\label{defC0} \mathcal{C}_0&:=\sum_{j=1}^{m+1}[Du]_{1/2,1;Q_{r}^-(z_0)\cap((-1+\varepsilon,0)\times\overline{\cD_j})}+\sum_{j=1}^{M}|f|_{(1+\delta)/2,1+\delta;(-1,0)\times\overline{\cD_{j}}}+\|Du\|_{L_{1}(\cQ)}.
\end{align}
\end{lemma}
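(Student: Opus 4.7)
The strategy is to decompose $\tilde u = D_\ell u - u_0 - \mathfrak{u}$ (and to simplify $\tilde f^\alpha$ piecewise) so that the singular factor $D\ell$ appears only in combinations that are tamed by the thin-slab geometry and by the piecewise Hölder continuity of $Du$. The thin-slab auxiliary $\mathfrak{u}$ is designed precisely to offset the $\mathbbm{1}_{\cD_j^c}$-piece of $Du_0$, so the bulk of the work is to handle the singular/regular cancellation on the $\cD_j$-pieces.

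For $\fint_{Q_r^-(z_0)}|D\tilde u|$, the term $D\mathfrak{u}$ is already pointwise controlled: from \eqref{eq9.46}, $\|D\mathfrak{u}\|_{L_\infty(Q_{1/4}^-)} \le N\mathcal{C}_0$. To handle $D(D_\ell u) - Du_0$, I restrict to each slab $\cD_k\cap Q_r^-(z_0)$ and use $\ell|_{\cD_k} = \tilde\ell_{,k}|_{\cD_k}$ to write
\begin{align*}
D(D_\ell u) - Du_0 = D\tilde\ell_{i,k}\bigl[D_iu(z) - D_iu(t_0, P_kx_0)\bigr] + \ell_i\,DD_iu - \sum_{j\ne k}D\tilde\ell_{i,j}\,D_iu(t_0, P_jx_0).
\end{align*}
The dangerous first term is where the cancellation happens: Lemma~\ref{lemell}(iii) bounds $|D\tilde\ell_{,k}|$ on $\cD_k$ by $N|h_k-h_{k-1}|^{-1/2}$, while $P_kx_0 \in \overline{\cD_k}$ together with the observation that non-emptiness of $Q_r^-(z_0)\cap\cD_k$ forces the relevant interface within $Nr$ of $x_0$ gives $|x-P_kx_0|\le Nr$ on $Q_r^-(z_0)\cap\cD_k$; the piecewise Hölder bound on $Du$ from Lemma~\ref{lemlocbdd} then yields $|D_iu(z)-D_iu(t_0,P_kx_0)|\le Nr\,[Du]_{1/2,1;Q_r^-(z_0)\cap((-1+\varepsilon,0)\times\overline{\cD_k})}$. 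Combined with the thin-slab volume bound $|Q_r^-(z_0)\cap\cD_k| \le Nr^{d+1}\min(|h_k-h_{k-1}|,r)$ (cf.\ \eqref{volume}), summing over $k$ gives an $L_1$-mean at most $Nr^{1/2}[Du]_{1/2,1} \le N\mathcal{C}_0$. The middle term $\ell_i DD_iu$ is pointwise $\le [Du]_{1/2,1}$. For the tail sum, the smooth extension $\tilde\ell_{,j}$ has bounded derivative on $\cD_k$ when $k\ne j$, and $\|Du\|_{L_\infty}$ is controlled by Lemma~\ref{lemlocbdd}.

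For $\fint_{Q_r^-(z_0)}|\tilde f^\alpha|$, I substitute \eqref{tildef1} into \eqref{def-tildefalpha} and use $\mathbbm{1}_{\cD_j^c} = 1 - \mathbbm{1}_{\cD_j}$ together with $\ell = \tilde\ell_{,k}$ on $\cD_k$ to collapse the $u_0$-contributions and obtain
\begin{align*}
\tilde f^\alpha = D_\ell f^\alpha - D_\ell A^{\alpha\beta}D_\beta u + A^{\alpha\beta}D_\beta \tilde\ell_{i,k}\bigl[D_iu - D_iu(t_0, P_kx_0)\bigr] + \delta_{\alpha d}\sum_{j=1}^m \mathbbm{1}_{x^d>h_j(x')}(n_j^d)^{-1}\tilde h_j
\end{align*}
on $\cD_k$. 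The first two terms are directly bounded by $|f|_{(1+\delta)/2,1+\delta}$ and $|A|_{(1+\delta)/2,1+\delta}\|Du\|_{L_\infty}$; the third is the same singular-times-Hölder product handled above; and $\tilde h_j$ from \eqref{deftildeh} is controlled by $N(|A|\|Du\|_{L_\infty}+|f|)$ since $D_\ell n_j$ is tangential and piecewise $C^\mu$.

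The main obstacle is the first (thin-slab) bound: one must simultaneously extract a half-power from the slab volume and a full $r$ from the Hölder increment so as to beat the $|h_k-h_{k-1}|^{-1/2}$ blow-up of $D\tilde\ell_{,k}$. This is the only place where the particular choice of reference values $D_iu(t_0,P_kx_0)$, with $P_kx_0$ sitting on $\overline{\cD_k}$ rather than on $\overline{\cD_{j_0}}$, is essential. Once that step is in place, all remaining estimates are routine consequences of Lemma~\ref{lemlocbdd} and the piecewise $C^{(1+\delta)/2,1+\delta}$ assumptions on $A^{\alpha\beta}$ and $f^\alpha$.
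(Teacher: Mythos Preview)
Your proposal is correct and follows essentially the same approach as the paper: the same three-way decomposition of $D\tilde u$ on each $\cD_k$, the same use of Lemma~\ref{lemell}(iii) together with $|x-P_kx_0|\le Nr$ and the $r$-factor from $[Du]_{1/2,1}$ to kill the $|h_k-h_{k-1}|^{-1/2}$ singularity, and the same simplification of $\tilde f^\alpha$ on $\cD_k$. Two cosmetic remarks: your reference to~\eqref{volume} is not quite the right one (that estimate concerns $\cD_j\setminus\Omega_j$); the slab computation you need is the paper's~\eqref{estDellk0}, where $h_k-h_{k-1}$ is a function of $x'$ and the bound $\min\{2r,h_k-h_{k-1}\}/|h_k-h_{k-1}|^{1/2}\le Nr^{1/2}$ is applied pointwise in $x'$ before integrating.
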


\begin{proof}
{\bf Step 1. Proof of \eqref{estfintDtildeu}.} It follows from \eqref{def-tildeu} and \eqref{defu0} that
\begin{align}\label{meanDtildeu}
\fint_{Q_{r}^{-}(z_{0})}|D\tilde u|\,dz&\leq \sum_{j=1}^{m+1}\|D^2u\|_{L_\infty(Q_{r}^-(z_0)\cap((-1+\varepsilon,0)\times\cD_j))}+\|D{\mathfrak u}\|_{L_\infty(Q_{r}^-(z_0))}\nonumber\\
&\quad+\fint_{Q_{r}^{-}(z_{0})}\big|D\ell^k Du-\sum_{j=1}^{m+1}D\tilde\ell_{,j}^kDu(t_0,P_jx_0)\big|\,dz.
\end{align}
For each $i=1,\ldots,m+1$,
\begin{align}\label{omegalk00}
&\big\|D\ell^k Du-\sum_{j=1}^{m+1}D\tilde\ell_{,j}^kDu(t_0,P_jx_0)\big\|_{L_1(Q_{r}^-(z_0)\cap((-1+\varepsilon,0)\times\cD_i))}\nonumber\\
&\leq N\big\|D\ell^k (Du-Du(t_0,P_ix_0))\big\|_{L_1(Q_{r}^-(z_0)\cap((-1+\varepsilon,0)\times\cD_i))}\nonumber\\
&\quad+N\big\|\sum_{j=1,j\neq i}^{m+1}D\tilde\ell_{,j}^kDu(t_0,P_jx_0)\big\|_{L_1(Q_{r}^-(z_0)\cap((-1+\varepsilon,0)\times\cD_i))}.
\end{align}
To estimate the right-hand side of \eqref{omegalk00}, we may assume that $Q_{r}^-(z_0)\cap((-1+\varepsilon,0)\times\cD_i)\neq\emptyset$. It follows from the definition of $P_ix_0$ in \eqref{Pjx} that if $i>{j_0}$, then $P_ix_0=(x'_0,h_{i-1}(x'_0))$ and
\begin{equation*}
|x_0-P_ix_0|=|x_0^d-h_{i-1}(x'_0)|\leq Nr.
\end{equation*}
The case of  $i\leq j_0$ is proved similarly.
Thus, we obtain
$$|x-P_ix_0|\leq|x-x_0|+|x_0-P_ix_0|\leq Nr$$
and
\begin{align*}
|Du(t,x)-Du(t_0,P_ix_0)|\leq Nr[Du]_{1/2,1;Q_{r}^-(z_0)\cap((-1+\varepsilon,0)\times\overline{\cD_i})}.
\end{align*}
With this, we have
\begin{align}\label{estDellk00}
&\big\|D\ell^k (Du-Du(t_0,P_ix_0))\big\|_{L_1(Q_{r}^-(z_0)\cap((-1+\varepsilon,0)\times\cD_i))}\nonumber\\
&\leq Nr^{3}[Du]_{1/2,1;Q_{r}^-(z_0)\cap((-1+\varepsilon,0)\times\overline{\cD_i})}\int_{B_r(x_0)\cap\cD_i}|D\ell^k|\ dx.
\end{align}
By using Lemma \ref{lemell}(iii) and $\min\{a,b\}\leq\sqrt{ab}$ for $a,b\geq0$, we have
\begin{align}\label{estDellk0}
\int_{B_r(x_0)\cap\cD_i}|D\ell^k|\ dx\leq N\int_{B_r'(x'_0)}\frac{\min\{2r,h_i-h_{i-1}\}}{|h_{i}-h_{i-1}|^{1/2}}\ dx'\leq Nr^{d-1/2}.
\end{align}
Thus, substituting \eqref{estDellk0} into \eqref{estDellk00}, we obtain
\begin{align}\label{estDellk}
&\big\|D\ell^k (Du-Du(t_0,P_ix_0))\big\|_{L_1(Q_{r}^-(z_0)\cap((-1+\varepsilon,0)\times\cD_i))}\notag\\
&\leq Nr^{d+5/2}[Du]_{1/2,1;Q_{r}^-(z_0)\cap((-1+\varepsilon,0)\times\overline{\cD_i})}.
\end{align}
Making use of the fact that $\tilde\ell_{,j}$ is the smooth extension of $\ell|_{\cD_j}$ to $\cup_{k=1,k\neq j}^{m+1}\cD_k$ and the local boundedness of $Du$ in Lemma \ref{lemlocbdd},  we have for each $i=1,\ldots,m+1$,
\begin{align}\label{estDelld}
&\Big\|\sum_{j=1,j\neq i}^{m+1}D\tilde\ell_{,j}^kDu(t_0,P_jx_0)\Big\|_{L_1(Q_{r}^-(z_0)\cap((-1+\varepsilon,0)\times\cD_i))}\nonumber\\
&\leq Nr^{d+2}\big(\|Du\|_{L_{1}(\cQ)}+\sum_{j=1}^{M}|f|_{(1+\delta)/2,1+\delta;(-1,0)\times\overline{\cD_{j}}}\big).
\end{align}
Substituting \eqref{estDellk} and \eqref{estDelld} into \eqref{omegalk00}, we obtain
\begin{align}\label{omegalk}
&\big\|D\ell^k Du-\sum_{j=1}^{m+1}D\tilde\ell_{,j}^kDu(t_0,P_jx_0)\big\|_{L_1(Q_{r}^-(z_0)\cap((-1+\varepsilon,0)\times\cD_i))}\nonumber\\
&\leq Nr^{d+2}\big([Du]_{1/2,1;Q_{r}^-(z_0)\cap((-1+\varepsilon,0)\times\overline{\cD_i})}+\|Du\|_{L_{1}(\cQ)}+\sum_{j=1}^{M}|f|_{(1+\delta)/2,1+\delta;(-1,0)\times\overline{\cD_{j}}}\big).
\end{align}
Substituting \eqref{eq9.46} and \eqref{omegalk} into \eqref{meanDtildeu}, we obtain \eqref{estfintDtildeu}.

{\bf Step 2. Proof of \eqref{estfinttildef}.} By \eqref{tildef1} and \eqref{def-tildefalpha}, we have
\begin{align}\label{meantildef}
\fint_{Q_{r}^{-}(z_{0})}|\tilde f^\alpha|\,dz
&\leq N\big(\sum_{j=1}^{M}|f|_{(1+\delta)/2,1+\delta;(-1,0)\times\overline{\cD_{j}}}+\|Du\|_{L_{1}(\cQ)}\big)\nonumber\\
&\quad+N\fint_{Q_{r}^-(z_0)}\big|D_\beta \ell_i D_iu-\sum_{j=1}^{m+1}\mathbbm{1}_{_{(-1,0)\times\cD_j}}D_\beta \ell_{i}D_iu(t_0,P_jx_0)\big|\,dz\nonumber\\
&\quad+\fint_{Q_{r}^{-}(z_{0})}\big|\delta_{\alpha d}\sum_{j=1}^{m}\mathbbm{1}_{x^d>h_j(x')} (n^d_j(x'))^{-1}\tilde h_j(t,x')\big|\,dz.
\end{align}
Using Lemma \ref{lemell}(iii) and similar to the proof of \eqref{estDellk}, we deduce that
\begin{align}\label{estsumDelldu}
&\fint_{Q_{r}^-(z_0)}\big|D_\beta \ell_i D_iu-\sum_{j=1}^{m+1}\mathbbm{1}_{_{(-1,0)\times\cD_j}}D_\beta \ell_{i}D_iu(t_0,P_jx_0)\big|\,dz\nonumber\\
&\leq N\sum_{j=1}^{m+1}\fint_{Q_{r}^-(z_0)\cap((-1+\varepsilon,0)\times\cD_{j})}|D_\beta\ell_i(D_iu-D_iu(t_0,P_jx_0))|\,dz\nonumber\\
&\leq Nr^{\frac{1}{2}}\sum_{j=1}^{m+1}[Du]_{1/2,1;Q_{r}^-(z_0)\cap((-1+\varepsilon,0)\times\overline\cD_j)}.
\end{align}
Furthermore, by \eqref{normal} and \eqref{deftildeh}, we have
\begin{align}\label{esttildeh}
&\fint_{Q_{r}^-(z_0)}\big|\sum_{j=1}^{m}\mathbbm{1}_{x^d>h_j(x')} (n^d_j(x'))^{-1}\tilde h_j(t,x')\big|\,dz\notag\\
&\leq N\big(\|Du\|_{L_{1}(\cQ)}+\sum_{j=1}^{M}|f|_{(1+\delta)/2,1+\delta;(-1,0)\times\overline{\cD_{j}}}\big).
\end{align}
Coming back to \eqref{meantildef} and using \eqref{estsumDelldu} and \eqref{esttildeh}, the estimate \eqref{estfinttildef} is proved. The proof of the lemma is complete.
\end{proof}

Denote
\begin{equation*}
\Phi(z_{0},r):=\inf_{\mathbf q^{k'},\mathbf Q\in\mathbb R^{n}}\left(\fint_{Q_r^-(z_0)}\big(|D_{\ell^{k'}}\tilde u(z;z_0)-\mathbf q^{k'}|^{\frac{1}{2}}+|\tilde U(z;z_0)-\mathbf Q|^{\frac{1}{2}}\big)\,dz \right)^{2},
\end{equation*}
where $\tilde u$ and $\tilde U$ are defined in \eqref{def-tildeu} and \eqref{deftildeU}, respectively. We shall establish a decay estimate of $\Phi(z_{0},r)$. Before this, we first set
\begin{equation}\label{transformation}
\begin{split}
&\tilde {\mathfrak u}(t,y;\Lambda z_0)=\tilde u(t,x;z_0), \quad \mathcal{A}^{\alpha\beta}(t,y)=\Lambda^{\alpha k}A^{ks}(t,x)\Lambda^{s\beta}, \\
&\tilde {\mathfrak f}^\alpha(t,y;\Lambda z_0)=\Lambda^{\alpha k}\tilde f^k(t,x;z_0),\quad {\mathfrak g}(t,y)=g(t,x),
\end{split}
\end{equation}
where $y=\Lambda x$ and $\Lambda=(\Lambda^{\alpha\beta})_{\alpha,\beta=1}^{d}$ is a $d\times d$ orthogonal matrix representing the linear transformation from the coordinate system associated with $0$ to the coordinate system associated with $x_0$ defined in Section \ref{sec 2.3}. Then we have from \eqref{eqtildeu} that $\tilde{\mathfrak u}$ satisfies
\begin{equation*}
-\tilde{\mathfrak u}_t+D_\alpha(\mathcal{A}^{\alpha\beta}D_\beta \tilde{\mathfrak u})={\mathfrak g}+ D_\alpha \tilde {\mathfrak f}^\alpha\quad\mbox{in}~\Lambda(Q_{3/4}^-),
\end{equation*}
where $\Lambda(Q_{3/4}^-):=(-9/16,0)\times\Lambda(B_{3/4})$.
Denote
\begin{align*}
&\phi(\Lambda z_{0},r)\\
&:=\inf_{\mathbf q^{k'},\mathbf Q\in\mathbb R^{n}}\left(\fint_{Q_r^-(\Lambda z_0)}\big(|D_{y^{k'}}\tilde{\mathfrak u}(z;\Lambda z_0)-\mathbf q^{k'}|^{\frac{1}{2}}+|\mathcal{A}^{d\beta}D_{y^\beta}\tilde{\mathfrak u}(z;\Lambda z_0)-\tilde {\mathfrak f}^d-\mathbf Q|^{\frac{1}{2}}\big)\,dz\right)^{2}.
\end{align*}
Then we have the following decay estimate of $\phi(\Lambda z_{0},r)$.

\begin{proposition}\label{lemma iteraphi}
Under the same assumptions as in Proposition \ref{proptildeu}. For any $0<\rho\leq r\leq 1/4$, we have
\begin{align*}
\phi(\Lambda z_0,\rho)&\leq N\Big(\frac{\rho}{r}\Big)^{\mu'}\phi(\Lambda z_0,r/2)+N\rho^{\mu'}\mathcal{C}_0,
\end{align*}
where $\mathcal{C}_0$ is defined in \eqref{defC0}, $\mu'=\min\big\{\frac{1}{2},\mu\big\}$, $N$ depends on $n,d,m,p,\nu$, the $C^{2+\mu}$ norm of $h_j$, and $|A|_{(1+\delta)/2,1+\delta;(-1,0)\times\overline{\cD_{j}}}$.
\end{proposition}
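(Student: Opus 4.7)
The plan is to carry out a Campanato-type decomposition in the $y$-coordinates along the lines of \cite[Proposition 4.2]{dx2021}, now adapted to the second-order regularity regime. The two quantities $D_{y^{k'}}\tilde{\mathfrak u}$ (for $k'\le d-1$) and $\mathcal{A}^{d\beta}D_{y^\beta}\tilde{\mathfrak u}-\tilde{\mathfrak f}^d$ appearing in $\phi$ are precisely the combinations that remain continuous across the interfacial boundaries under the transmission condition in \eqref{homosecond}, so they are the natural targets for a Campanato iteration. To exploit this, I would freeze the coefficients and data to depend only on $y^d$ using the strip averages defined in \eqref{piecewisecont}, and then write $\tilde{\mathfrak u}=v+w$ on $Q_{r/2}^-(\Lambda z_0)$, where $v$ solves a divergence-form parabolic problem with coefficients $\bar{\mathcal{A}}^{\alpha\beta}(y^d)$, data $\bar{\tilde{\mathfrak f}}^\alpha(y^d)$, no lower-order source term, and boundary values equal to $\tilde{\mathfrak u}$ on $\partial_p Q_{r/2}^-(\Lambda z_0)$.

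For $v$ the interfaces are the flat hyperplanes $\{y^d=y^d_j\}$ and the coefficients depend only on $y^d$. Differentiating in any tangential direction $y^{k'}$ shows that $D_{y^{k'}}v$ solves a divergence-form system with the same one-variable coefficients and no interfacial jumps, and interior $L_\infty$ (and H\"older) estimates for such one-direction-coefficient problems — collected in the auxiliary results cited from \cite{dx2021} — give H\"older continuity of $D_{y^{k'}}v$ and $\bar{\mathcal{A}}^{d\beta}D_{y^\beta}v-\bar{\tilde{\mathfrak f}}^d$ with exponent $\mu'$. This yields the model decay
$$
\phi_v(\Lambda z_0,\rho)\le N(\rho/r)^{\mu'}\phi_v(\Lambda z_0,r/2).
$$
For the remainder $w=\tilde{\mathfrak u}-v$, whose equation has forcing built from ${\mathfrak g}$ together with $D_\alpha\bigl((\mathcal{A}^{\alpha\beta}-\bar{\mathcal{A}}^{\alpha\beta})D_\beta\tilde{\mathfrak u}+\tilde{\mathfrak f}^\alpha-\bar{\tilde{\mathfrak f}}^\alpha\bigr)$, I would apply the weak type-$(1,1)$ estimate for divergence-form parabolic systems with coefficients depending only on one direction; this is exactly what motivates the $L^{1/2}$-quasi-norm in the definition of $\phi$. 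The tangential H\"older continuity of $A^{\alpha\beta}$ and $f^\alpha$, the small-measure bound \eqref{volume}, and Lemma \ref{lemmeanDu} together control the $L^1(Q_r^-(\Lambda z_0))$ norm of this forcing by $Nr^{d+2+\mu'}\mathcal{C}_0$, and the weak-$(1,1)$ estimate converts this into the corresponding bound on the $L^{1/2}$-quasi-norm of $(Dw,\mathcal{A}^{d\beta}D_\beta w)$.

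Combining the decay for $v$ with the $L^{1/2}$ bound for $w$ — after accounting for the error in replacing $\bar{\mathcal{A}}^{d\beta}D_\beta v-\bar{\tilde{\mathfrak f}}^d$ by $\mathcal{A}^{d\beta}D_\beta\tilde{\mathfrak u}-\tilde{\mathfrak f}^d$, which is controlled by the same terms — yields the claimed estimate. The main obstacle is securing the full exponent $\mu'$ in the decay for $v$: the tangent hyperplane at $y_0$ matches the true interface only up to $O(r^{1+\mu})$, so one must use the averaging \eqref{piecewisecont} together with the measure bound \eqref{volume} in a balanced way to guarantee that the approximation error does not swamp the Campanato gain $(\rho/r)^{\mu'}$. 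A secondary subtlety is that tangential differentiation interacts with the jump term $\tilde h_j$ that was incorporated into $\tilde f^\alpha$ via \eqref{tildef1}; one has to verify that the resulting contributions in $\tilde{\mathfrak f}^\alpha-\bar{\tilde{\mathfrak f}}^\alpha$ still enjoy the $r^{\mu'}$ oscillation required by the freezing step, which is ultimately ensured by the $C^{\mu}$ regularity of $D_\ell n_j^\alpha$ noted after \eqref{deftildeh}.
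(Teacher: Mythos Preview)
Your strategy is essentially the paper's, with the roles of $v$ and $w$ interchanged: you let the ``good'' piece $v$ carry the boundary data of $\tilde{\mathfrak u}$ and solve the frozen-coefficient equation, and you apply the weak type-$(1,1)$ estimate to the remainder $w$. The paper does the reverse (its $v$ has zero boundary data and absorbs the forcing, while $w=\tilde{\mathfrak u}-u_1-v$ solves the homogeneous frozen equation), but this is cosmetic; both routes lead to the same one-step decay and then iterate.

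There is, however, one genuine technical point that your proposal glosses over. You assert that the $L^1$ norm of the forcing $(\bar{\mathcal A}^{\alpha\beta}-\mathcal A^{\alpha\beta})D_{y^\beta}\tilde{\mathfrak u}+\tilde{\mathfrak f}^\alpha-\bar{\tilde{\mathfrak f}}^\alpha$ is controlled by $Nr^{d+2+\mu'}\mathcal C_0$ via Lemma~\ref{lemmeanDu} and the volume bound \eqref{volume}. But Lemma~\ref{lemmeanDu} only yields $\fint_{Q_r^-}|D\tilde u|\le N\mathcal C_0$, with no extra factor of $r^{\mu'}$. The obstruction is that $D_\beta\tilde u$ contains the contribution
\[
D_\beta\ell_i\,D_iu-\sum_{j}D_\beta\tilde\ell_{i,j}\,D_iu(t_0,P_jx_0),
\]
which is \emph{not} in $L^\infty$ because $|D\ell|\lesssim|h_j-h_{j-1}|^{-1/2}$ (Lemma~\ref{lemell}(iii)). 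Hence the product $(\bar{\mathcal A}-\mathcal A)D_\beta\tilde{\mathfrak u}$ cannot be estimated by pairing $\|\bar{\mathcal A}-\mathcal A\|_{L^1}\lesssim r^{d+5/2}$ with an $L^\infty$ bound on $D\tilde{\mathfrak u}$, nor by the reverse pairing. The paper resolves this in Lemma~\ref{lemmaFG} through the algebraic identity \eqref{Af1}: the singular part of $D\tilde u$ combines with the component $\tilde{\mathfrak f}_1^\alpha$ of $\tilde{\mathfrak f}^\alpha$ (see \eqref{deff1}--\eqref{deff1f2}) so that the variable coefficient $\mathcal A^{\alpha\beta}$ is replaced by the piecewise constant $\bar{\mathcal A}^{\alpha\beta}$, leaving a term $I_2$ whose $L^1$ norm is bounded directly to order $r^{d+5/2}$ as in \eqref{est-I12}. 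This cancellation is precisely why the auxiliary functions $u_0$ and $\mathfrak u$ were introduced, and without it the freezing step cannot deliver the extra $r^{\mu'}$ needed to close the iteration.
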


The proof of  Proposition \ref{lemma iteraphi} will be given later. We first use it to prove a decay estimate of $\Phi(z_{0},r)$.
\begin{lemma}\label{lemma itera}
Under the same assumptions as in Proposition \ref{proptildeu}.
For any $0<\rho\leq r\leq 1/4$, we have
\begin{align}\label{est phi'}
\Phi(z_{0},\rho)&\leq N\Big(\frac{\rho}{r}\Big)^{\mu'}\Phi(z_{0},r/2)+N\rho^{\mu'}\mathcal{C}_0,
\end{align}
where $\mathcal{C}_0$ is defined in \eqref{defC0}, $\mu'=\min\big\{\frac{1}{2},\mu\big\}$, $N$ depends on $n,d,m,p,\nu$, the $C^{2+\mu}$ norm of $h_j$, and $|A|_{(1+\delta)/2,1+\delta;(-1,0)\times\overline{\cD_{j}}}$.
\end{lemma}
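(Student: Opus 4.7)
The strategy is to transfer the decay estimate from the rotated $y$-coordinate system (Proposition \ref{lemma iteraphi}) back to the original $x$-coordinate system, using the pointwise estimates in \eqref{difference-coor} as a bridge. By the change of variables $y=\Lambda x$, the quantities that appear in $\phi(\Lambda z_0,r)$ are $D_{y^{k'}}\tilde{\mathfrak u}$ and $\mathcal{A}^{d\beta}D_{y^\beta}\tilde{\mathfrak u}-\tilde{\mathfrak f}^d$, which in the $x$-frame correspond to taking the directional derivative in the constant direction $\tau_{k'}=\ell^{k'}(y_0)$ and the constant conormal direction $n_{y_0}$, respectively. The quantities in $\Phi(z_0,r)$ involve instead the variable vector fields $\ell^{k'}(x)$ and $n(x)$.

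First, I would use Lemma \ref{lemell}(i) and the analogous $C^{1/2}$-regularity of $n$ (inherited from $\ell_d^{k,0}$) to obtain the pointwise bounds
\begin{align*}
|D_{\ell^{k'}}\tilde u(z)-D_{y^{k'}}\tilde{\mathfrak u}(z)|&\le |\ell^{k'}(x)-\tau_{k'}|\,|D\tilde u(z)|\le Nr^{\mu'}|D\tilde u(z)|,\\
|\tilde U(z)-\big(\mathcal{A}^{d\beta}D_{y^\beta}\tilde{\mathfrak u}(z)-\tilde{\mathfrak f}^d(z)\big)|&\le |n(x)-n_{y_0}|\,\big(|A||D\tilde u|+|\tilde f|\big)\le Nr^{\mu'}\big(|D\tilde u(z)|+|\tilde f(z)|\big)
\end{align*}
on $Q_r^-(z_0)$, which are exactly the differences stated in \eqref{difference-coor}.

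Next, integrating the $1/2$-power of these differences over $Q_\rho^-(z_0)$ and $Q_{r/2}^-(z_0)$ and invoking the mean-value controls $\fint_{Q_r^-(z_0)}|D\tilde u|+\fint_{Q_r^-(z_0)}|\tilde f^\alpha|\le N\mathcal{C}_0$ from Lemma \ref{lemmeanDu}, I obtain
\begin{align*}
|\Phi(z_0,\rho)-\phi(\Lambda z_0,\rho)|\le N\rho^{\mu'}\mathcal{C}_0,\qquad |\Phi(z_0,r/2)-\phi(\Lambda z_0,r/2)|\le Nr^{\mu'}\mathcal{C}_0.
\end{align*}
(Here I am using that $\Phi$ and $\phi$ are both infima over constant vectors of $L^{1/2}$-averages, so the triangle-type inequality absorbs additive $L^{1/2}$-perturbations.)

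Finally, chaining Proposition \ref{lemma iteraphi} with these two transfer inequalities yields
\begin{align*}
\Phi(z_0,\rho)&\le \phi(\Lambda z_0,\rho)+N\rho^{\mu'}\mathcal{C}_0\\
&\le N\Big(\tfrac{\rho}{r}\Big)^{\mu'}\phi(\Lambda z_0,r/2)+N\rho^{\mu'}\mathcal{C}_0\\
&\le N\Big(\tfrac{\rho}{r}\Big)^{\mu'}\Phi(z_0,r/2)+N\Big(\tfrac{\rho}{r}\Big)^{\mu'}r^{\mu'}\mathcal{C}_0+N\rho^{\mu'}\mathcal{C}_0.
\end{align*}
Since $(\rho/r)^{\mu'}r^{\mu'}=\rho^{\mu'}$, the error terms collapse into $N\rho^{\mu'}\mathcal{C}_0$, which is precisely \eqref{est phi'}.

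I expect no real difficulty at this step; the substantive work has already been done in Proposition \ref{lemma iteraphi} (which requires the weak-type $(1,1)$ estimate for the one-variable-coefficient auxiliary problem and the fine decomposition of $\tilde{\mathfrak u}$). The present lemma is essentially bookkeeping that exploits the fact that the $C^{1/2}$-modulus of the frame fields $\ell^{k'}$ and $n$ matches the target Hölder exponent $\mu'=\min\{1/2,\mu\}$, so that transferring between the two coordinate systems never inflates the error beyond $N\rho^{\mu'}\mathcal{C}_0$.
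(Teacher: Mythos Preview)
Your proof is correct and follows essentially the same route as the paper: establish the pointwise bridge estimates \eqref{difference-coor}, combine them with Lemma~\ref{lemmeanDu} to transfer between $\Phi$ and $\phi$, and then chain with Proposition~\ref{lemma iteraphi}. The only cosmetic difference is that you invoke Lemma~\ref{lemell}(i) directly for $|\ell^{k'}(x)-\tau_{k'}|\le N\sqrt r$ (and the analogous bound for $n$), whereas the paper re-derives this via an explicit case analysis on how the interfaces meet $B_r(x_0)$; the two are equivalent since $\tau_{k'}=\ell^{k'}(y_0)$ and $|x-y_0|\le 2r$.
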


\begin{proof}
Let $y_0$ be as in Section \ref{sec 2.3}. Note that
\begin{equation}\label{Dell-nalpha}
\begin{split}
&D_{\ell^k}\tilde u(t,x;z_0)-D_{y^k}\tilde{\mathfrak u}(t,y;\Lambda z_0)=(\ell^k(x)-\tau_k)\cdot D\tilde u(t,x;z_0),\\
&\tilde U(t,x;z_0)-\mathcal{A}^{d\beta}(t,y)D_{y^\beta}\tilde{\mathfrak u}(t,y;\Lambda z_0)+\tilde{\mathfrak f}^d(t,y;\Lambda z_0)\\
&=(n^\alpha-n_{y_0}^\alpha)(A^{\alpha\beta}(t,x)D_\beta \tilde u(t,x;z_0)-\tilde f^\alpha(t,x;z_0)),
\end{split}
\end{equation}
where  $\tau_k$ and $n_{y_0}^\alpha$ are defined in \eqref{deftauk} and \eqref{defny0}, respectively. For any $x\in B_r(x_0)\cap\cD_{j}$, where $r\in(|x_0-y_0|,1)$ and $j=1,\ldots,m+1$, we shall first estimate $|\ell_d^{k,0}(x)-D_{k}h_{j_0}(y'_0)|$ according to the following three cases:

{\bf Case 1.}  If $B_r(x_0)\cap\Gamma_j\neq\emptyset$ and $B_r(x_0)\cap\Gamma_{j-1}\neq\emptyset$, then by using \eqref{Diff_Dh}, we obtain
\begin{align}\label{diffe_Dkh}
|D_kh_{j}(x')-D_{k}h_{j-1}(x')|\leq N|h_{j}(x')-h_{j-1}(x')|^{1/2}\leq N\sqrt r.
\end{align}
Thus, by the triangle inequality,
\begin{align}\label{diffell}
&|\ell_d^{k,0}(x)-D_{k}h_{j_0}(y'_0)|\notag\\
&\leq |\ell_d^{k,0}(x)-D_{k}h_{j}(x')|+|D_{k}h_{j}(x')-D_{k}h_{j_0}(y'_0)|\nonumber\\
&\leq \Big|\frac {h_{j}-x^d}{h_{j}-h_{j-1}}
D_k(h_{j-1}-h_j)(x')\Big|+|D_kh_{j}(x')-D_{k}h_{j_0}(x')|+Nr\leq N\sqrt r.
\end{align}

{\bf Case 2.} If $B_r(x_0)\cap\Gamma_j\neq\emptyset$ and  $B_r(x_0)\cap\Gamma_{j-1}=\emptyset$ (see Figure \ref{case2}), then $|h_{j}(x')-x^d|\leq Nr$, and by \eqref{diffe_Dkh}, we have
\begin{align}\label{diffell2}
|\ell_d^{k,0}(x)-D_{k}h_{j_0}(y'_0)|
&\leq |\ell_d^{k,0}(x)-D_{k}h_{j}(x')|+|D_{k}h_{j}(x')-D_{k}h_{j_0}(y'_0)|\nonumber\\
&\leq \Big|\frac {h_{j}-x^d}{h_{j}-h_{j-1}}
D_k(h_{j-1}-h_j)(x')\Big|+N\sqrt r\nonumber\\
&\leq \frac {N|h_{j}-x^d|}{|h_{j}-h_{j-1}|^{1/2}}+N\sqrt r\leq N|h_j-x^d|^{1/2}+N\sqrt r\leq N\sqrt r.
\end{align}

\begin{figure}
\begin{tikzpicture}
\draw (0,0) circle (1.5);
\fill (0,0) circle (1pt) node[right] {$x_0$};
\draw [-] (0,0) -- (-1.45,-0.4) node at (-0.75,-0.35){$r$};
\draw[blue,domain=-2.5:2.5] plot(\x,0.1*\x*\x+0.2*\x+0.5) node at (2.4,1.9){$\Gamma_{j_0}$};
\fill (-0.1,0.481) circle (1pt) node[below left] {$y_0$};
\draw [->] (-0.1,0.481) -- (0.6,0.621) node[above] {$\tau_k$};
\draw [->] (-0.1,0.481) -- (-0.2,0.973) node[left] {$n_{y_0}$};
\draw [dashed] (0,0) -- (-0.1,0.481);
\draw [-] (-0.12,0.5772) -- (0.05,0.6112) -- (0.07,0.515);
\draw [-] (-0.12,0.5772) -- (0.05,0.6112);
\draw[blue,domain=-2.5:2.5] plot(\x,0.1*\x*\x+0.05*\x-1) node at (2.4,0){$\Gamma_{j}$};
\draw[blue,domain=-2.5:2.5] plot(\x,-0.1*\x*\x-0.05*\x-1.8) node at (3,-2.5){$\Gamma_{j-1}$};
\fill (0.7,-1.1) circle (1pt) node[below left] {$x$};
\fill (1.5,-1.5)  node[right] {$\cD_j$};
\fill (0.7,-0.916) circle (1pt) node at (0.6,-0.6) {$(x',h_j(x'))$};
\fill (0.7,-1.884) circle (1pt) node at (0.4,-2.2) {$(x',h_{j-1}(x'))$};
\draw [dashed] (0.7,-0.916) -- (0.7,-1.884);
\end{tikzpicture}
\vspace*{0.01mm}
\caption{Illustration of Case 2}
\label{case2}
\end{figure}
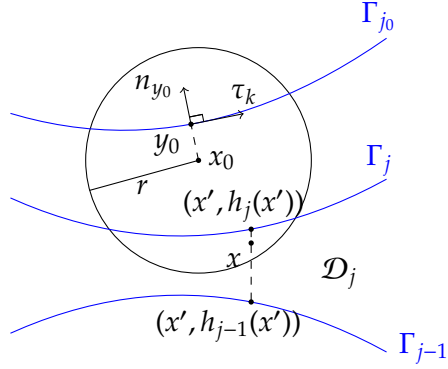

{\bf Case 3.} If $B_r(x_0)\cap\Gamma_j=\emptyset$ and $B_r(x_0)\cap\Gamma_{j-1}\neq\emptyset$, then similar to Case 2, by \eqref{diffe_Dkh},
\begin{align}\label{diffell3}
|\ell_d^{k,0}(x)-D_{k}h_{j_0}(y'_0)|
\leq N\sqrt r.
\end{align}

Combining \eqref{diffell}--\eqref{diffell3}, we have
\begin{align*}
|\ell_d^{k,0}(x)-D_{k}h_{j_0}(y'_0)|\leq N\sqrt r.
\end{align*}
Thus, recalling \eqref{defell}, \eqref{deftauk}, \eqref{defnorm}, and \eqref{defny0}, we obtain
\begin{align*}
|\ell^k(x)-\tau_{k}|\leq N\sqrt r,\quad |n(x)-n_{y_0}|\leq N\sqrt r,
\end{align*}
where $k=1,\ldots,d-1$.
This together with \eqref{Dell-nalpha} gives
\begin{equation}\label{difference-coor}
\begin{aligned}
&|D_{\ell^k}\tilde u(t,x;z_0)-D_{y^k}\tilde{\mathfrak u}(t,y;\Lambda z_0)|\leq N\sqrt r|D\tilde u(t,x;z_0)|,\\
&|\tilde U(t,x;z_0)-\mathcal{A}^{d\beta}(t,y)D_{y^\beta}\tilde{\mathfrak u}(t,y;\Lambda z_0)+\tilde {\mathfrak f}^d(t,y;\Lambda z_0)|\\
&\leq N\sqrt r(|D\tilde u(t,x;z_0)|+|\tilde f(t,x;z_0)|).
\end{aligned}
\end{equation}

Now by using the triangle inequality, \eqref{difference-coor}, \eqref{estfintDtildeu}, and \eqref{estfinttildef}, we have
\begin{align*}
&\left(\fint_{Q_\rho^-(z_0)}\big(|D_{\ell^{k'}}\tilde u(z;z_0)-\mathbf q^{k'}|^{\frac{1}{2}}+|\tilde U(z;z_0)-\mathbf Q|^{\frac{1}{2}}\big)\,dz \right)^{2}\\
&\leq \Bigg(\fint_{Q_\rho^-(\Lambda z_0)}\big(|D_{y^{k'}}\tilde{\mathfrak u}(t,y;\Lambda z_0)-\mathbf q^{k'}|^{\frac{1}{2}}\nonumber\\
&\qquad+|\mathcal{A}^{d\beta}(t,y)D_{y^\beta}\tilde{\mathfrak u}(t,y;\Lambda z_0)-\tilde {\mathfrak f}^d(t,y;\Lambda z_0)-\mathbf Q|^{\frac{1}{2}}\Big)\,dt\,dy\Bigg)^{2}+N\sqrt{\rho}\mathcal{C}_0,
\end{align*}
where $0<\rho\leq r\leq 1/4$ and $\mathcal{C}_0$ is defined in \eqref{defC0}.
Since $\mathbf{q}^{k'}, \mathbf{Q}\in\mathbb R^{n}$ are arbitrary, we obtain
\begin{align*}
\Phi(z_{0},\rho)\leq \phi(\Lambda z_{0},\rho)+N\sqrt{\rho}\mathcal{C}_0.
\end{align*}
This, in combination with  Proposition \ref{lemma iteraphi}, leads to that
\begin{align}\label{est phiPhi}
\Phi(z_{0},\rho)\leq N\Big(\frac{\rho}{r}\Big)^{\mu'}\phi(\Lambda z_{0},r/2)+N\rho^{\mu'}\mathcal{C}_0.
\end{align}
Similarly, we have
\begin{align*}
\phi(\Lambda z_{0},r/2)\leq\Phi(z_{0},r/2)+N\sqrt{r}\mathcal{C}_0.
\end{align*}
Substituting it into \eqref{est phiPhi} and using $\mu'\leq1/2$, we obtain
\begin{align*}
\Phi(z_{0},\rho)\leq N\Big(\frac{\rho}{r}\Big)^{\mu'}\Phi(z_{0},r/2)+N\rho^{\mu'}\mathcal{C}_0.
\end{align*}	
The lemma is proved.
\end{proof}

The rest of this section is to prove Proposition \ref{lemma iteraphi}. For this, we define
$$\widetilde{{\mathcal A}^{\alpha\beta}}=\eta \overline{{\mathcal A}^{\alpha\beta}}(y^{d})+\nu(1-\eta)\delta_{\alpha\beta}\delta_{ij},$$ where $\overline{{\mathcal A}^{\alpha\beta}}(y^{d})$ are piecewise constant functions corresponding to $\mathcal A^{\alpha\beta}$ (cf. p.\pageref{piecewisecont}), $y=\Lambda x$, $\Lambda=(\Lambda^{\alpha\beta})_{\alpha,\beta=1}^{d}$ is a $d\times d$ orthogonal matrix defined in Section \ref{sec 2.3}, and $\eta\in C_{0}^{\infty}(B_{r}(x_{0}))$ satisfies
$$0\leq\eta\leq1,\quad\eta\equiv1~\mbox{in}~B_{2r/3}(x_{0}),
\quad|D\eta|\leq {6}/{r}.$$
We have the following weak type-$(1,1)$ estimate (cf.  \cite[Lemma 4.3]{dx2021}).

\begin{lemma}\label{weak est barv}
Let $p\in(1,\infty)$. Let $v\in \mathcal{H}_{p}^{1}(Q_{r}^-(\Lambda z_0))$ be a weak solution to the problem
\begin{align*}
\begin{cases}
-v_{t}+D_{\alpha}(\widetilde{{\mathcal A}^{\alpha\beta}}D_{\beta}v)=G\mathbbm{1}_{Q_{r/2}^-(\Lambda z_0)}+\Div(F\mathbbm{1}_{Q_{r/2}^-(\Lambda z_0)})&\ \mbox{in}~Q_{r}^-(\Lambda z_0),\\
v=0&\ \mbox{on}~\partial_p Q_{r}^-(\Lambda z_0),
\end{cases}
\end{align*}
where $F, G\in L_{p}(Q_{r/2}^-(\Lambda z_0))$. Then for any $s>0$, we have
\begin{align*}
|\{z\in Q_{r/2}^-(\Lambda z_0): |Dv(z)|>s\}|\leq\frac{N}{s}\left(\|F\|_{L_{1}(Q_{r/2}^-(\Lambda z_0))}+r\|G\|_{L_{1}(Q_{r/2}^-(\Lambda z_0))}\right),
\end{align*}
where $N=N(n,d,p,\nu)$.
\end{lemma}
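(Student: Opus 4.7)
The strategy is to adapt the argument of \cite[Lemma 4.3]{dx2021}. The essential structural feature to exploit is that inside $B_{2r/3}(\Lambda x_0)$ the coefficient $\widetilde{{\mathcal A}^{\alpha\beta}}$ equals $\overline{{\mathcal A}^{\alpha\beta}}(y^d)$, which depends only on the single variable $y^d$, and the associated interfaces $\{y^d=y_j^d\}$ are flat hyperplanes in the $y$-coordinate system; outside the support of $\eta$ the coefficients are simply the constant $\nu\delta_{\alpha\beta}\delta_{ij}$. Such ``one-direction'' operators enjoy strong a priori interior estimates, since tangential derivatives of a homogeneous solution satisfy an equation of the same form and the conormal flux $\widetilde{{\mathcal A}^{d\beta}}D_{y^\beta}w$ is continuous across the flat interfaces.

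First I would establish the standard $L_2$ energy estimate
\begin{equation*}
\|Dv\|_{L_2(Q_r^-(\Lambda z_0))}\le N\big(\|F\|_{L_2(Q_{r/2}^-(\Lambda z_0))}+r\|G\|_{L_2(Q_{r/2}^-(\Lambda z_0))}\big)
\end{equation*}
by testing the equation against $v$, using strong ellipticity of $\widetilde{{\mathcal A}^{\alpha\beta}}$, and invoking the Dirichlet data on $\partial_p Q_r^-(\Lambda z_0)$. Next I would run a parabolic Calder\'on--Zygmund stopping-time decomposition of the pair $(F,rG)$ at level $s$: this produces a disjoint family of dyadic parabolic sub-cylinders $\{Q_i\}\subset Q_{r/2}^-(\Lambda z_0)$ with $\sum_i |Q_i|\le Ns^{-1}(\|F\|_{L_1}+r\|G\|_{L_1})$, a good/bad splitting $F=F_g+F_b$, $G=G_g+G_b$ in which $\|(F_g,rG_g)\|_{L_\infty}\le Ns$ and the bad parts are supported on $\cup_i Q_i$ with vanishing integral on each $Q_i$, and correspondingly splits $v=v_g+v_b$.

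For the good part, Chebyshev together with the $L_2$ estimate applied to $v_g$ (whose source is now in $L_\infty\cap L_1$) controls $|\{|Dv_g|>s/2\}|$ by $Ns^{-1}(\|F\|_{L_1}+r\|G\|_{L_1})$. For the bad part one works on each doubled cube $Q_i^*=2Q_i$: using the cancellation of $(F_b,G_b)$ on $Q_i$ and the fact that solutions to the homogeneous one-direction equation admit a local $L_\infty$-bound on their gradient in terms of $L_2$-averages, one obtains
\begin{equation*}
|\{z\notin\cup_i Q_i^*:\,|Dv_b|>s/2\}|\le Ns^{-1}(\|F\|_{L_1}+r\|G\|_{L_1}),
\end{equation*}
and combining with the volume bound on $\cup_i Q_i^*$ closes the argument.

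The main obstacle will be this last ingredient: a uniform $L_\infty$-gradient bound for homogeneous-equation solutions whose coefficients are only bounded measurable (indeed piecewise constant) in the normal direction. This does not come from classical Schauder theory, but is exactly what the parabolic $L_p$-theory for ``one-direction'' coefficients of \cite{dk17,dx2021} supplies, through tangential differencing together with the conormal equation that recovers the normal derivative across the flat interfaces.
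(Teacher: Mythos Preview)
Your proposal is correct and matches the paper's approach: the paper does not supply an independent proof of this lemma but simply cites \cite[Lemma~4.3]{dx2021}, which is exactly the result you propose to adapt. Your Calder\'on--Zygmund sketch is a faithful outline of how that weak type-$(1,1)$ estimate is proved there, with the interior gradient bound for homogeneous one-direction systems (Lemma~\ref{lemma xn} here) serving as the key local regularity input for the bad part.
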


We shall choose suitable $F$ and $G$ in order to apply Lemma \ref{weak est barv}.  We first denote
\begin{align}\label{deff1}
\tilde f_1^\alpha(t,x;z_0)&:=A^{\alpha\beta}\big(D_\beta \ell_i D_i u-\sum_{j=1}^{m+1}\mathbbm{1}_{_{(-1,0)\times\cD_j}}D_\beta \ell_{i}D_iu(t_0,P_jx_0)\big),
\end{align}
and
\begin{align*}
\tilde f_2^\alpha(t,x):=D_\ell f^\alpha-D_{\ell} A^{\alpha\beta}D_\beta u+\delta_{\alpha d}\sum_{j=1}^{m}\mathbbm{1}_{x^d>h_j(x')} (n^d_j(x'))^{-1}\tilde h_j(t,x').
\end{align*}
Then $\tilde f_1^\alpha+\tilde f_2^\alpha=\tilde f^\alpha$ which is defined in \eqref{def-tildefalpha}. Moreover, it follows from $f^\alpha, A^{\alpha\beta}\in C^{\frac{1+\delta}{2},1+\delta}((-1+\varepsilon,0)\times\overline{\cD_{j}})$, $D_{\ell}n_j^\alpha\in C^{\mu}$ together with the definition of $\tilde h_j(t,x')$ in \eqref{deftildeh}, and the fact that the vector field $\ell$ is $C^{1/2}$, which is proved in Lemma \ref{lemell} (i), that
$$\tilde f_2^\alpha\in C^{\mu'/2,\mu'}((-1+\varepsilon,0)\times\overline{\cD_{j}}),\quad  \mu'=\min\big\{\frac{1}{2},\mu\big\}.$$
Define
\begin{equation}\label{deff1f2}
\tilde {\mathfrak f}_1^\alpha(t,y;\Lambda z_0)=\Lambda^{\alpha k}\tilde f_1^k(t,x;z_0),\quad \tilde {\mathfrak f}_2^\alpha(t,y)=\Lambda^{\alpha k}\tilde f_2^k(t,x).
\end{equation}
Then $\tilde {\mathfrak f}_1^\alpha+\tilde {\mathfrak f}_2^\alpha=\tilde {\mathfrak f}^\alpha$ which is defined in \eqref{transformation} and $\tilde {\mathfrak f}_2^\alpha$ is also piecewise $C^{\mu'/2,\mu'}$. Now we choose
\begin{align}\label{def-Falpha}
F^\alpha:=F^\alpha(t,y;\Lambda z_0)&=(\overline{{\mathcal A}^{\alpha\beta}}(y^{d})-\mathcal{A}^{\alpha\beta}(t,y))D_{y^\beta}\tilde{\mathfrak u}(t,y;\Lambda z_0)+\tilde {\mathfrak f}_1^\alpha(t,y;\Lambda z_0)\nonumber\\
&\quad+\tilde {\mathfrak f}_2^\alpha(t,y)-\bar{\mathfrak f}_2^\alpha(y^d),\quad F=(F^1,\ldots,F^d),
\end{align}
and
\begin{equation}\label{def-G}
G={\mathfrak g}(t,y)=g(t,x),
\end{equation}
where $\bar{\mathfrak f}_2^\alpha(y^d)$ is a piecewise constant function corresponding to $\tilde {\mathfrak f}_2^\alpha(t,y)$ and $g(t,x)$ is defined in \eqref{defg}.
The following result holds.

\begin{lemma}\label{lemmaFG}
Let $F$ and $G$ be defined as in \eqref{def-Falpha} and \eqref{def-G}, respectively. Then we have
\begin{align}\label{estF}
\|F\|_{L_1(Q_{r}^-(\Lambda z_0))}\leq Nr^{d+2+\mu'}\mathcal{C}_0
\end{align}
and
\begin{align*}
&\|G\|_{L_{1}(Q_{r}^-(\Lambda z_0))}\\
&\leq N r^{d+\frac{3}{2}}\Big(\sum_{j=1}^{m+1}\|D^2u\|_{L_\infty(Q_{r}^-(z_0)\cap((-1+\varepsilon,0)
\times\cD_j))}\\
&\qquad\qquad +\sum_{j=1}^{M}|f|_{(1+\delta)/2,1+\delta;(-1,0)\times\overline{\cD_{j}}}
+\|Du\|_{L_{1}(\cQ)}\Big),
\end{align*}
where $\mathcal{C}_0$ is defined in \eqref{defC0}, $\mu'=\min\big\{\frac{1}{2},\mu\big\}$, $N$ depends on  $|A|_{(1+\delta)/2,1+\delta;(-1,0)\times\overline{\cD_{j}}}$,  $n,d,p,m,\nu$, and the $C^{2+\mu}$ norm of $h_j$.
\end{lemma}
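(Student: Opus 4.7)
The plan is to decompose $F$ into the three summands appearing in \eqref{def-Falpha} and estimate each separately, and to handle $G$ directly from the pointwise formula \eqref{defg} for $g$. Throughout, the basic tools are the piecewise Hölder bounds on the coefficients and data, the integrability bound $\int_{B_{r}(x_0)\cap\cD_{j}}|D\ell|\,dx\leq Nr^{d-1/2}$ established in \eqref{estDellk0}, the thin-set volume estimate \eqref{volume}, and the mean bound \eqref{estfintDtildeu} on $D\tilde u$.

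For $F_{1}^{\alpha}:=(\overline{\mathcal{A}^{\alpha\beta}}(y^{d})-\mathcal{A}^{\alpha\beta}(t,y))D_{y^{\beta}}\tilde{\mathfrak u}$, I would partition $Q_{r}^{-}(\Lambda z_0)$ according to the collections $\cD_{j}\cap\Omega_{j}$ and $\cD_{j}\setminus\Omega_{j}$, following the scheme that produces \eqref{estAbarf}. On the well-aligned pieces, the piecewise $C^{(1+\delta)/2,1+\delta}$ regularity of $\mathcal{A}^{\alpha\beta}$ and the construction of $\overline{\mathcal{A}^{\alpha\beta}}$ as a subdomain-local average yield $|\overline{\mathcal{A}^{\alpha\beta}}-\mathcal{A}^{\alpha\beta}|\leq Nr^{1+\delta}$, which combined with \eqref{estfintDtildeu} contributes $\lesssim r^{d+3+\delta}\mathcal{C}_0$; since $\delta>1/2\geq\mu'$, this is absorbed into $Nr^{d+2+\mu'}\mathcal{C}_0$. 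On the thin pieces, whose space-time measure is bounded by $Nr^{d+5/2}$ by \eqref{volume}, the coefficient difference is only bounded by $2\nu^{-1}$, so I would split $D\tilde u$ into the $L_\infty$-bounded part (under the a priori assumption that $Du$ is piecewise $C^{1}$ in $x$ and $C^{1/2}$ in $t$) and the ``singular'' part $D\ell\cdot Du$; the bounded part contributes $\lesssim r^{d+5/2}\mathcal{C}_0$, and the singular part is controlled via \eqref{estDellk0} by $\lesssim r^{d+3/2}\cdot r^{2}\cdot\|Du\|_{L_\infty}$, again $\lesssim r^{d+2+\mu'}\mathcal{C}_0$ because $\mu'\leq 1/2$.

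For $F_{2}^{\alpha}:=\tilde{\mathfrak f}_{1}^{\alpha}$, the structure is precisely that already exploited in \eqref{omegalk00}--\eqref{omegalk}: restricted to $\cD_{i}$ the indicator collapses $\tilde f_{1}^{\alpha}$ to $A^{\alpha\beta}D_{\beta}\ell_{i}\bigl(D_{i}u(t,x)-D_{i}u(t_{0},P_{i}x_0)\bigr)$, so combining $|Du(t,x)-Du(t_{0},P_{i}x_0)|\leq Nr\,[Du]_{1/2,1}$ with $\int_{B_{r}(x_0)\cap\cD_{i}}|D\ell|\,dx\leq Nr^{d-1/2}$ and the time factor $r^{2}$ yields $\|\tilde{\mathfrak f}_{1}\|_{L_{1}(Q_{r}^{-}(\Lambda z_0))}\leq Nr^{d+5/2}\mathcal{C}_0$. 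The remaining piece $F_{3}^{\alpha}:=\tilde{\mathfrak f}_{2}^{\alpha}-\bar{\mathfrak f}_{2}^{\alpha}$ is a direct analogue of \eqref{estAbarf}, but with Hölder exponent $\mu'$ in place of $\delta$: since $\tilde{\mathfrak f}_{2}$ is piecewise $C^{\mu'/2,\mu'}$ by the discussion after \eqref{deff1f2}, the same reasoning yields $\|F_{3}\|_{L_{1}(Q_{r}^{-}(\Lambda z_0))}\leq Nr^{d+2+\mu'}\mathcal{C}_0$. Summing the three pieces proves the first inequality.

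For $G=g$, the pointwise formula \eqref{defg} gives $|g|\leq N|D\ell|\bigl(|D^{2}u|+|DA||Du|+|Df|\bigr)$ in each $\cD_{j}$. On each subdomain, the factor in parentheses is uniformly bounded by the quantities appearing on the right-hand side of the target inequality (through $|A|_{(1+\delta)/2,1+\delta}$ and $|f|_{(1+\delta)/2,1+\delta}$), and integrating $|D\ell|$ against it over $Q_{r}^{-}(\Lambda z_0)$ with the help of \eqref{estDellk0} and an $r^{2}$ time factor yields the claimed $r^{d+3/2}$ decay. The main obstacle will be the careful handling of $F_{1}$ on the thin set $\cD_{j}\setminus\Omega_{j}$: because $D\tilde{\mathfrak u}$ is not pointwise bounded (it inherits the singularity $|D\ell|\lesssim|h_{j}-h_{j-1}|^{-1/2}$), one must split $D\tilde u$ into its $L_{\infty}$ and $L_{1}$-integrable parts and verify that neither spoils the $r^{d+2+\mu'}$ decay once the thin-set indicator is inserted.
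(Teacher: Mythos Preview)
Your handling of $\tilde{\mathfrak f}_1^\alpha$, of $\tilde{\mathfrak f}_2^\alpha-\bar{\mathfrak f}_2^\alpha$, and of $G$ is correct and matches the paper. The gap lies in your treatment of $F_1^\alpha=(\overline{\mathcal A^{\alpha\beta}}-\mathcal A^{\alpha\beta})D_{y^\beta}\tilde{\mathfrak u}$ on the thin set. Your proposed split of $D\tilde u$ into an ``$L_\infty$-bounded part'' and the ``singular part $D\ell\cdot Du$'' fails: the complement of $D\ell\cdot Du$ in $D\tilde u$ still contains $-Du_0=-\sum_j D\tilde\ell_{i,j}\,D_iu(t_0,P_jx_0)$, and on $\cD_i$ the $j=i$ summand equals $-D\ell\cdot Du(t_0,P_ix_0)$, which carries the same $|D\ell|\lesssim|h_i-h_{i-1}|^{-1/2}$ blow-up. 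So neither piece of your split is in $L_\infty$. Your arithmetic also slips: integrating $|D\ell|\,|Du|$ over $Q_r^-$ yields at best $Nr^{d+3/2}\|Du\|_{L_\infty}$ (one factor $r^2$ from time, $r^{d-1/2}$ from \eqref{estDellk0}), and $r^{d+3/2}$ is strictly larger than the required $r^{d+2+\mu'}$ for small $r$.

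The repair---and this is exactly what the paper does---is to keep the two singular pieces together. On $\cD_i$ one has
\[
D_s\tilde u=\bigl(\ell_iD_sD_iu-D_s\mathfrak u-\textstyle\sum_{j\neq i}D_s\tilde\ell_{i,j}\,D_iu(t_0,P_jx_0)\bigr)+D_s\ell_i\bigl(D_iu-D_iu(t_0,P_ix_0)\bigr),
\]
where the first bracket is genuinely bounded and the second is the singular factor multiplied by an \emph{oscillation} of size $Nr[Du]_{1/2,1}$. The paper then combines $(\overline{\mathcal A}-\mathcal A)D_{y^\beta}\tilde{\mathfrak u}$ with $\tilde{\mathfrak f}_1^\alpha$ algebraically (equation \eqref{Af1}) into $I_1+I_2$, with $I_1=(\overline{\mathcal A}-\mathcal A)\cdot[\text{bounded}]$ controlled via $\|\overline{\mathcal A}-\mathcal A\|_{L_1}\leq Nr^{d+5/2}$ and $I_2=\overline{\mathcal A}\cdot D\ell\,(Du-Du(t_0,P_ix_0))$ controlled by \eqref{estDellk}. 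Either this algebraic combination or the corrected split above (bounded part plus $D\ell\times\text{oscillation}$, then estimated separately) closes the argument; what does not work is peeling off $D\ell\cdot Du$ without its companion $-D\ell\cdot Du(t_0,P_ix_0)$ coming from $u_0$.
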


\begin{proof}
Combining  \eqref{deff1} and $A^{ks}(t,x)=\Gamma^{k\alpha }\mathcal{A}^{\alpha\beta}(t,y)\Gamma^{\beta s}$, where $\Gamma=\Lambda^{-1}$, we have
\begin{align}\label{f1alpha}
&\tilde {\mathfrak f}_1^\alpha(t,y;\Lambda z_0)=\Lambda^{\alpha k}\tilde f_1^k(t,x;z_0)\notag\\
&=\Lambda^{\alpha k}A^{ks}\big(D_s \ell_i D_i u-\sum_{j=1}^{m+1}\mathbbm{1}_{_{(-1,0)\times\cD_j}}D_s \ell_{i}D_iu(t_0,P_jx_0)\big)\nonumber\\
&=\mathcal{A}^{\alpha\beta}(t,y)\Gamma^{\beta s}\big(D_s \ell_i D_i u-\sum_{j=1}^{m+1}
\mathbbm{1}_{_{(-1,0)\times\cD_j}}D_s \ell_{i}D_iu(t_0,P_jx_0)\big).
\end{align}
From $\tilde {\mathfrak u}(t,y;\Lambda z_0)=\tilde u(t,x;z_0)$ in \eqref{transformation}, one has
\begin{align*}
(\overline{{\mathcal A}^{\alpha\beta}}(y^{d})-\mathcal{A}^{\alpha\beta}(t,y))D_{y^\beta}\tilde{\mathfrak u}(t,y;\Lambda z_0)=(\overline{{\mathcal A}^{\alpha\beta}}(y^{d})-\mathcal{A}^{\alpha\beta}(t,y))\Gamma^{\beta s}D_s\tilde u(t,x;z_0).
\end{align*}
By using \eqref{defu0} and \eqref{def-tildeu}, we have
\begin{align*}
D_s\tilde u(t,x;z_0)=\ell_iD_{s}D_iu-D_{s}\mathfrak u+D_s \ell_i D_iu-\sum_{j=1}^{m+1}D_s \tilde\ell_{i,j}D_iu(t_0,P_jx_0).
\end{align*}
These together with \eqref{f1alpha} give
\begin{align}\label{Af1}
&(\overline{{\mathcal A}^{\alpha\beta}}(y^{d})-\mathcal{A}^{\alpha\beta}(t,y))D_{y^\beta}\tilde{\mathfrak u}(t,y;\Lambda z_0)+\tilde {\mathfrak f}_1^\alpha(t,y;\Lambda z_0)\nonumber\\
&=(\overline{{\mathcal A}^{\alpha\beta}}(y^{d})-\mathcal{A}^{\alpha\beta}(t,y))\Gamma^{\beta s}D_s\tilde u(t,x;z_0)+\Lambda^{\alpha k}\tilde f_1^k(t,x;z_0)\nonumber\\
&=(\overline{{\mathcal A}^{\alpha\beta}}(y^{d})-\mathcal{A}^{\alpha\beta}(t,y))\Gamma^{\beta s}(\ell_iD_{s}D_iu-D_{s}\mathfrak u-\sum_{j=1}^{m+1}\mathbbm{1}_{_{(-1,0)\times\cD_j^c}}D_s \tilde\ell_{i,j}D_iu(t_0,P_jx_0))\nonumber\\
&\quad+\overline{{\mathcal A}^{\alpha\beta}}(y^{d})\Gamma^{\beta s}
\big(D_s \ell_i D_iu-\sum_{j=1}^{m+1}
\mathbbm{1}_{_{(-1,0)\times\cD_j}}
D_s\ell_{i}D_iu(t_0,P_jx_0)\big)=:I_{1}+I_{2}.
\end{align}
Similar to \eqref{estAbarf}, in view of $\mathcal{A}\in C^{(1+\delta)/2,1+\delta}((-1+\varepsilon,0)\times(\cD_{\varepsilon}\cap\overline{\cD}_j))$ and \eqref{volume}, we obtain
\begin{equation*}
\|\overline{\mathcal{A}^{\alpha\beta}}(y^{d})-{\mathcal{A}^{\alpha\beta}}(\cdot)\|_{L_1(Q_{r}^-(\Lambda z_0))}
\leq Nr^{d+\frac{5}{2}}.
\end{equation*}
From this together with \eqref{eq9.46}, Lemma \ref{lemlocbdd}, and the fact that $\mathbbm{1}_{_{(-1,0)\times\cD_j^c}}D_s \tilde\ell_{i,j}$ is piecewise $C^\mu$, it follows  that
\begin{align}\label{est-I11}
&\|I_{1}\|_{L_1(Q_{r}^-(\Lambda z_0))}\notag\\
&\leq\|\overline{\mathcal{A}^{\alpha\beta}}(y^{d})-{\mathcal{A}^{\alpha\beta}}(\cdot)\|_{L_1(Q_{r}^-(\Lambda z_0))}\nonumber\\
&\quad\cdot\|\ell_iD_{s}D_iu-D_{s}\mathfrak u-\sum_{j=1}^{m+1}\mathbbm{1}_{_{(-1,0)\times\cD_j^c}}D_s \tilde\ell_{i,j}D_iu(t_0,P_jx_0)\|_{L_\infty(Q_{r}^-(z_0))}\nonumber\\ &\leq Nr^{d+\frac{5}{2}}\Big(\sum_{j=1}^{m+1}\|D^2u\|_{L_\infty(Q_{r}^-(z_0)\cap((-1+\varepsilon,0)\times\cD_j))}+\sum_{j=1}^{M}|f|_{(1+\delta)/2,1+\delta;(-1,0)\times\overline{\cD_{j}}}\nonumber\\
&\qquad\quad+\|Du\|_{L_{1}(\cQ)}\Big).
\end{align}
By using a similar argument that led to \eqref{estDellk}, we have
\begin{align}\label{est-I12}
\|I_{2}\|_{L_1(Q_{r}^-(\Lambda z_0))}
&\leq N\sum_{j=1}^{m+1}\big\|D\ell_i(D_iu-D_iu(t_0,P_jx_0))\big\|_{L_1(Q_{r}^-(z_0))\cap((-1+\varepsilon,0)\times\cD_j)}\nonumber\\
&\leq Nr^{d+\frac{5}{2}}\sum_{j=1}^{m+1}[Du]_{1/2,1;Q_{r}^-(z_0)\cap((-1+\varepsilon,0)\times\overline\cD_j)}.
\end{align}
Now coming back to \eqref{Af1} and using \eqref{est-I11} and \eqref{est-I12}, we obtain
\begin{align}\label{estADtildeu}
\|(\overline{{\mathcal A}^{\alpha\beta}}(y^{d})-\mathcal{A}^{\alpha\beta}(\cdot))D_{y^\beta}\tilde{\mathfrak u}(\cdot;\Lambda z_0)+\tilde {\mathfrak f}_1^\alpha(\cdot;\Lambda z_0)\|_{L_1(Q_{r}^-(\Lambda z_0))}
\leq Nr^{d+\frac{5}{2}}\mathcal{C}_0.
\end{align}
Since $\tilde {\mathfrak f}_2^\alpha\in C^{\mu'/2,\mu'}((-1+\varepsilon,0)\times\overline{\cD_{j}})$, the estimate \eqref{estAbarf} also holds for $\tilde {\mathfrak f}_2^\alpha$ and thus
\begin{align}\label{estDf}
\int_{Q_{r}^-(\Lambda z_0)}\big|\tilde {\mathfrak f}_2^\alpha(t,y)-\bar{\mathfrak f}_2^\alpha(y^d)\big|
\leq Nr^{d+2+\mu'}\bigg( \sum_{j=1}^{M}|f|_{(1+\delta)/2,1+\delta;(-1,0)\times\overline{\cD_{j}}}+\|Du\|_{L_{1}(\cQ)}\bigg),
\end{align}
where $\mu'=\min\big\{\frac{1}{2},\mu\big\}$.
Combining  \eqref{estADtildeu} and \eqref{estDf}, we have \eqref{estF}.

Next we show the estimate of $\|G\|_{L_{1}(Q_{r}^-(\Lambda z_0))}$. We obtain from  \eqref{estDellk0} that
\begin{align*}
\|D\ell\|_{L_{1}(Q_{r}^{-}(z_{0})\cap((-1+\varepsilon,0)\times\cD_j))}
\leq Nr^{d+\frac{3}{2}}.
\end{align*}
Then we have
\begin{align*}
&\|G\|_{L_{1}(Q_{r}^-(\Lambda z_0))}\\
&\leq N r^{d+\frac{3}{2}}\Big(\sum_{j=1}^{m+1}\|D^2u\|_{L_\infty(Q_{r}^-(z_0)\cap((-1+\varepsilon,0)\times\cD_j))}\\
&\qquad\quad +\sum_{j=1}^{M}|f|_{(1+\delta)/2,1+\delta;(-1,0)\times\overline{\cD_{j}}}
+\|Du\|_{L_{1}(\cQ)}\Big).
\end{align*}
The lemma is proved.
\end{proof}

Now we are ready to prove Proposition \ref{lemma iteraphi}.
\begin{proof}[Proof of Proposition \ref{lemma iteraphi}.]
Recall that $v$ in Lemma \ref{weak est barv} satisfies
\begin{align*}
\begin{cases}
-v_{t}+D_{\alpha}(\widetilde{{\mathcal A}^{\alpha\beta}}D_{\beta}v)=G\mathbbm{1}_{Q_{r/2}^-(\Lambda z_0)}+\Div(F\mathbbm{1}_{Q_{r/2}^-(\Lambda z_0)})&\ \mbox{in}~Q_{r}^-(\Lambda z_0),\\
v=0&\ \mbox{on}~\partial_p Q_{r}^-(\Lambda z_0),
\end{cases}
\end{align*}
where $F$ and $G$ are given in \eqref{def-Falpha} and \eqref{def-G}, respectively. Similar to \cite[(4.8)]{dx2021}, by using Lemmas \ref{weak est barv} and \ref{lemmaFG}, we have
\begin{align}\label{holder v}
\left(\fint_{Q_{r/2}^-(\Lambda z_0)}|Dv|^{\frac{1}{2}}\,dz\right)^2\leq Nr^{\mu'}\mathcal{C}_0,
\end{align}
where  $\mathcal{C}_0$ is defined in \eqref{defC0}.
Let
\begin{equation}\label{defw u1}
u_{1}(y^{d})=\int_{\Lambda x_{0}^{d}}^{y^{d}}(\overline{\mathcal A^{dd}}(s))^{-1}\bar{\mathfrak f}_2^{d}(s)\,ds\quad\text{and}\quad w:=w(z;\Lambda z_0)=\tilde{\mathfrak u}(z;\Lambda z_0)-u_{1}-v,
\end{equation}
where $\bar{\mathfrak f}_2^{d}$ is the piecewise constant function corresponding to $\tilde {\mathfrak f}_2^d$ defined in \eqref{deff1f2}. Then $w$ satisfies
$$-w_{t}+D_{\alpha}(\overline {\mathcal A^{\alpha\beta}}(y^d)D_{\beta}w)=0\quad\mbox{in}~Q_{r/2}^-(\Lambda z_0).$$
Since the coefficient $\overline {\mathcal A^{\alpha\beta}}(y^d)$ only depends on $y^d$, for any $\kappa\in(0,1/2)$ to be fixed later, by Lemma \ref{lemma xn}, we have
\begin{align}\label{DW kappa}
&\|D_{y^{k'}}w(\cdot;\Lambda z_0)-(D_{y^{k'}}w)_{Q_{\kappa r}^{-}(\Lambda z_{0})}\|_{L_{1/2}(Q_{\kappa r}^{-}(\Lambda z_{0}))}^{1/2}\notag\\
&\quad +\|W(\cdot;\Lambda z_0)-(W)_{Q_{\kappa r}^{-}(\Lambda z_{0})}\|_{L_{1/2}(Q_{\kappa r}^{-}(\Lambda z_{0}))}^{1/2}\nonumber\\
&\leq N(\kappa r)^{d+5/2}\left([D_{y^{k'}}w]_{1/2,1;Q_{r/4}^{-}(\Lambda z_{0})}^{1/2}
+[W]_{1/2,1;Q_{r/4}^{-}(\Lambda z_{0})}^{1/2}\right)\nonumber\\
&\leq N\kappa^{d+5/2}\int_{Q_{r/2}^{-}(\Lambda z_{0})}|(D_{y^{k'}}w(z;\Lambda z_0),W(z;\Lambda z_0))|^{1/2}\,dz,
\end{align}
where $W=\overline{\mathcal A^{d\beta}}(y^d)D_{y^\beta}w(z;\Lambda z_0)$. Define
\begin{equation*}
h(y^{d}):=\int_{0}^{y^{d}}\Big(\overline{\mathcal A^{dd}}(s)\Big)^{-1}
\Big(\mathbf{Q}-\sum_{\beta=1}^{d-1}\overline{\mathcal A^{d\beta}}(s)\mathbf{q}^{\beta}\Big)\,ds
\end{equation*}
and
\begin{align*}
\tilde{w}:=w-\sum_{\beta=1}^{d-1}y^{\beta}\mathbf{q}^{\beta}-h(y^{d}),
\end{align*}
where $\mathbf q^{\beta},\mathbf Q\in\mathbb R^{n}$, $\beta=1,\ldots,d-1$.
Then $$-\tilde w_{t}+D_{\alpha}(\overline {\mathcal A^{\alpha\beta}}(y^d)D_{\beta}\tilde w)=0\quad\mbox{in}~ Q_{r/2}^{-}(\Lambda z_{0}),$$
and
$$D_{y^k}\tilde{w}=D_{y^{k'}}w-\mathbf{q}^{k'},\quad \widetilde{W}:=\overline{\mathcal A^{d\beta}}(y^{d})D_{\beta}\tilde{w}=W-\mathbf{Q}.$$
Replacing $w$ and $W$ with $\tilde{w}$ and $\widetilde{W}$ in \eqref{DW kappa}, respectively, we have
\begin{align*}
&\|D_{y^{k'}}w(\cdot;\Lambda z_0)-(D_{y^{k'}}w)_{Q_{\kappa r}^{-}(\Lambda z_{0})}\|_{L_{1/2}(Q_{\kappa r}^{-}(\Lambda z_{0}))}^{1/2}\\
&\quad +\|W(\cdot;\Lambda z_0)-(W)_{Q_{\kappa r}^{-}(\Lambda z_{0})}\|_{L_{1/2}(Q_{\kappa r}^{-}(\Lambda z_{0}))}^{1/2}\nonumber\\
&\leq N\kappa^{d+5/2}\int_{Q_{r/2}^{-}(\Lambda z_{0})}|(D_{y^{k'}}w(z;\Lambda z_0)-\mathbf{q}^{k'},W(z;\Lambda z_0)-\mathbf{Q})|^{1/2}\,dz,
\end{align*}
which implies
\begin{align}\label{diff-wW}
&\Bigg(\fint_{Q_{\kappa r}^-(\Lambda z_0)}\big(|D_{y^{k'}}w(z;\Lambda z_0)-(D_{y^{k'}}w)_{Q_{\kappa r}^{-}(\Lambda z_{0})}|^{\frac{1}{2}}+|W(z;\Lambda z_0)-(W)_{Q_{\kappa r}^{-}(\Lambda z_{0})}|^{\frac{1}{2}}\big)\,dz\Bigg)^{2}\nonumber\\
&\leq N\kappa\left(\fint_{Q_{r/2}^-(\Lambda z_0)}\big(|D_{y^{k'}}w(z;\Lambda z_0)-\mathbf{q}^{k'}|^{\frac{1}{2}}+|W(z;\Lambda z_0)-\mathbf Q|^{\frac{1}{2}}\big)\,dz\right)^{2}.
\end{align}
By the identity $\tilde{\mathfrak u}=w+u_1+v$ and \eqref{defw u1}, we have
\begin{align*}
D_{y^{k'}}\tilde{\mathfrak u}=D_{y^{k'}}w+D_{y^{k'}} v
\end{align*}
and
\begin{align*}
&\mathcal A^{d\beta}D_{y^\beta}\tilde{\mathfrak u}-\tilde {\mathfrak f}^d\\
&=(\mathcal A^{d\beta}-\overline{\mathcal A^{d\beta}}(y^d))D_{y^\beta}\tilde{\mathfrak u}+\overline{\mathcal A^{d\beta}}(y^d)D_{y^\beta}w+\overline{\mathcal A^{d\beta}}(y^d)D_{y^\beta}v+\bar{\mathfrak f}_2^d(y^d)-\tilde {\mathfrak f}^d\\
&=\overline{\mathcal A^{d\beta}}(y^d)D_{y^\beta}w+\overline{\mathcal A^{d\beta}}(y^d)D_{y^\beta}v-F^d,
\end{align*}
where $F^d=(\overline{\mathcal A^{d\beta}}(y^d)-\mathcal A^{d\beta})D_{y^\beta}\tilde{\mathfrak u}-\bar{\mathfrak f}_2^d(y^d)+\tilde {\mathfrak f}^d$.
Then we have from the triangle inequality, \eqref{holder v}, and \eqref{diff-wW}  that
\begin{align*}
&\Bigg(\fint_{Q_{\kappa r}^-(\Lambda z_0)}\big(|D_{y^{k'}}\tilde{\mathfrak u}(z;\Lambda z_0)-(D_{y^{k'}}w)_{Q_{\kappa r}^{-}(\Lambda z_{0})}|^{\frac{1}{2}}\nonumber\\
&\qquad\qquad\quad+|\mathcal A^{d\beta}D_{y^\beta}\tilde{\mathfrak u}(z;\Lambda z_0)-\tilde {\mathfrak f}^d(z;\Lambda z_0)-(W)_{Q_{\kappa r}^{-}(\Lambda z_{0})}|^{\frac{1}{2}}\big)\,dz\Bigg)^{2}\nonumber\\
&\leq N\kappa\left(\fint_{Q_{r/2}^-(\Lambda z_0)}\big(|D_{y^{k'}}\tilde{\mathfrak u}(z;\Lambda z_0)-\mathbf{q}^{k'}|^{\frac{1}{2}}+|\mathcal A^{d\beta}D_{y^\beta}\tilde{\mathfrak u}(z;\Lambda z_0)-\tilde {\mathfrak f}^d(z;\Lambda z_0)-\mathbf Q|^{\frac{1}{2}}\big)\,dz\right)^{2}\nonumber\\
&\quad+N\kappa^{-2(d+2)}\left(\fint_{Q_{r/2}^-(\Lambda z_0)}|F^d(z;\Lambda z_0)|^{\frac{1}{2}}\,dz\right)^{2}+N\kappa^{-2(d+2)}r^{\mu'}\mathcal{C}_0.
\end{align*}
Using \eqref{estF}, we deduce
\begin{align*}
&\Bigg(\fint_{Q_{\kappa r}^-(\Lambda z_0)}\big(|D_{y^{k'}}\tilde{\mathfrak u}(z;\Lambda z_0)-(D_{y^{k'}}w)_{Q_{\kappa r}^{-}(\Lambda z_{0})}|^{\frac{1}{2}}\nonumber\\
&\qquad\qquad\quad+|\mathcal A^{d\beta}D_{y^\beta}\tilde{\mathfrak u}(z;\Lambda z_0)-\tilde {\mathfrak f}^d(z;\Lambda z_0)-(W)_{Q_{\kappa r}^{-}(\Lambda z_{0})}|^{\frac{1}{2}}\big)\,dz\Bigg)^{2}\nonumber\\
&\leq N\kappa\Big(\fint_{Q_{r/2}^-(\Lambda z_0)}\big(|D_{y^{k'}}\tilde{\mathfrak u}(z;\Lambda z_0)-\mathbf{q}^{k'}|^{\frac{1}{2}}\nonumber\\
&\qquad\quad+|\mathcal A^{d\beta}D_{y^\beta}\tilde{\mathfrak u}(z;\Lambda z_0)-\tilde {\mathfrak f}^d(z;\Lambda z_0)-\mathbf Q|^{\frac{1}{2}}\big)\,dz\Big)^{2}+N\kappa^{-2(d+2)}r^{\mu'}\mathcal{C}_0.
\end{align*}
Since $\mathbf{q}^{k'}, \mathbf{Q}\in\mathbb R^{n}$ are arbitrary, we deduce that
\begin{align*}
\phi(\Lambda z_0,\kappa r)\leq N_{0}\kappa\phi(\Lambda z_0,r/2)+N\kappa^{-2(d+2)}r^{\mu'}\mathcal{C}_0.
\end{align*}
Choosing $\kappa\in(0,1/2)$  small enough so that $N_{0}\kappa\leq\kappa^{\gamma}$ for any fixed $\gamma\in(\mu',1)$ and iterating, we have
\begin{align*}
\phi(\Lambda z_0,\kappa^{j}r)\leq\kappa^{j\mu'}\phi(\Lambda z_0,r/2)+N(\kappa^{j}r)^{\mu'}\mathcal{C}_0.
\end{align*}
Hence, we have for any $\rho$ with $0<\rho\leq r\leq1/4$ and $\kappa^j r\leq\rho<\kappa^{j-1}$,
\begin{align*}
\phi(\Lambda z_0,\rho)\leq N\Big(\frac{\rho}{r}\Big)^{\mu'}\phi(\Lambda z_0,r/2)+N\rho^{\mu'}\mathcal{C}_0.
\end{align*}
The lemma is proved.
\end{proof}

\subsection{Piecewise regularity of \texorpdfstring{$u_t$}{} and the boundedness of \texorpdfstring{$[Du]_{t;(1+\delta)/2}$}{} and \texorpdfstring{$\|D^2u\|_{L_\infty}$}{}}\label{sublocalbound}
This section is devoted to the proof of the piecewise regularity of $u_t$ and the boundedness of $[Du]_{t;(1+\delta)/2}$ and $\|D^2u\|_{L_\infty}$.

\begin{lemma}\label{lemut}
Under the same assumptions as in Proposition \ref{proptildeu}, we have
\begin{align*}
&\sum_{j=1}^{m+1}[Du]_{t,(1+\delta)/2;(-1+3\varepsilon,0)\times (B_{1-2\varepsilon}\cap\overline{\cD_{j}})}+\sum_{j=1}^{m+1}|u_t|_{1/4,1/2;(-1+3\varepsilon,0)\times (B_{1-2\varepsilon}\cap\overline{\cD_{j}})}\\
&\leq N\|Du\|_{L_{1}(\cQ)}+N\sum_{j=1}^{M}|f|_{(1+\delta)/2,1+\delta;(-1,0)\times\overline{\cD_{j}}},
\end{align*}
where $N>0$ is a constant depending only on $n,d,m,p,\nu,\varepsilon$, $|A|_{(1+\delta)/2,1+\delta;(-1,0)\times\overline{\cD_{j}}}$, and the $C^{2+\mu}$ norm of $h_j$.
\end{lemma}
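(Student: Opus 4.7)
The plan is to use a finite-difference quotient argument in the time variable, combined with the piecewise $C^{\mu'/2, \mu'}$-regularity of $Du$ for the $s = 0$ case already recorded in Lemma \ref{lemlocbdd} (from \cite{dx2021}). For small $h > 0$, I set $w_h(t,x) := u(t+h, x) - u(t, x)$ on a slightly shrunk cylinder $Q'$; subtracting \eqref{systems} at $t$ from \eqref{systems} at $t+h$, $w_h$ satisfies
\[
-\partial_t w_h + D_\alpha(A^{\alpha\beta}(t,x) D_\beta w_h) = D_\alpha F_h^\alpha \quad \text{in } Q',
\]
where $F_h^\alpha(t,x) = (f^\alpha(t+h, x) - f^\alpha(t, x)) - (A^{\alpha\beta}(t+h, x) - A^{\alpha\beta}(t, x)) D_\beta u(t+h, x)$.

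Next I would control $F_h^\alpha$ piecewise. Since $A^{\alpha\beta}, f^\alpha$ are piecewise $C^{(1+\delta)/2, 1+\delta}$ and $Du$ is piecewise $C^{\mu'/2, \mu'}$ (Lemma \ref{lemlocbdd}), one obtains $\|F_h^\alpha\|_{L_\infty} \leq N h^{(1+\delta)/2}$; interpolating this smallness against the piecewise spatial $C^{1+\delta}$ and temporal $C^{(1+\delta)/2}$-regularity yields, for each $\gamma_0 \in (0, \delta]$,
\[
\sum_{j=1}^{m+1} |F_h^\alpha|_{\gamma_0/2, \gamma_0; Q' \cap ((-1,0)\times \overline{\cD_j})} \leq N h^{(1+\delta - \gamma_0)/2}.
\]
Lemma \ref{lemlocbdd} applied to the equation for $w_h$ then gives
\[
\|Dw_h\|_{L_\infty(Q_r^-)} + \sum_{j=1}^{m+1}[Dw_h]_{\mu'/2, \mu'; Q_r^- \cap((-1,0)\times \overline{\cD_j})} \leq N \Bigl( \|Dw_h\|_{L_1(Q_{2r}^-)} + \sum_{j=1}^{m+1}|F_h^\alpha|_{\mu'/2, \mu'; Q_{2r}^-\cap((-1,0)\times\overline{\cD_j})} \Bigr).
\]
Starting from the bound $\|Dw_h\|_{L_1} \leq Nh^{\mu'/2}$ (consequence of the already-known $C^{\mu'/2}$-regularity of $Du$ in $t$) and bootstrapping --- iteratively substituting the improved $L_\infty$-bound back into the $L_1$-norm together with the sharper interpolation of $F_h^\alpha$ --- one reaches $\|Dw_h\|_{L_\infty} \leq N h^{(1+\delta)/2}$, whence $\sum_j [Du]_{t, (1+\delta)/2; (-1+3\varepsilon, 0)\times (B_{1-2\varepsilon}\cap\overline{\cD_j})} \leq N$ after dividing by $h^{(1+\delta)/2}$ and taking the supremum over $h$.

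For the piecewise $C^{1/4, 1/2}$-regularity of $u_t$, I would rescale $v_h := w_h / h^{1/2}$. The data for $v_h$'s equation is uniformly bounded in the appropriate piecewise H\"older space, so Lemma \ref{lemlocbdd} furnishes a uniform piecewise $C^{(1+\mu')/2, 1+\mu'}$ bound on $v_h$. Passing to $h \to 0^+$ along a subsequence via Arzel\`a-Ascoli and identifying the limit distributionally with $u_t$ yields $u_t \in C^{1/4, 1/2}$ piecewise; the exponent $1/4$ in $t$ arises as half of the spatial $1/2$, consistent with the parabolic scaling $t \sim x^2$.

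The main obstacle is the bootstrap producing the sharp $(1+\delta)/2$ rate in the temporal H\"older estimate of $Du$. A straight application of Lemma \ref{lemlocbdd} returns only $\|Dw_h\|_{L_\infty} \leq N h^{\mu'/2}$, which yields only $[Du]_{t, \mu'/2}$. Upgrading this to the full $(1+\delta)/2$ rate requires a delicate alternation between the interpolation step for $F_h^\alpha$ (which trades $L_\infty$-smallness for spatial H\"older control) and the piecewise regularity estimate for $Dw_h$, making essential use of the full piecewise $C^{(1+\delta)/2, 1+\delta}$ regularity of both coefficients $A^{\alpha\beta}$ and data $f^\alpha$, and of the transmission-problem structure when localizing to a single subdomain.
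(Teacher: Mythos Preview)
Your overall approach---apply the $s=0$ gradient estimate (Lemma \ref{lemlocbdd}) to a time-difference quotient of $u$, then bootstrap to reach the sharp exponent $(1+\delta)/2$ for $Du$---is precisely the paper's strategy. The paper packages it slightly differently by working with the normalized quotient $\delta_h^\gamma u := (u(t,x)-u(t-h,x))/h^\gamma$ for a variable $\gamma\in(0,\tfrac{1+\delta}{2})$; this places the data $\delta_h^\gamma f^\alpha$ and $\delta_h^\gamma A^{\alpha\beta}\,D_\beta u(\cdot-h,\cdot)$ uniformly (in $h$) in a piecewise H\"older class of exponent $1+\delta-2\gamma$, so Lemma \ref{lemlocbdd} applies directly and the bootstrap becomes a matter of pushing $\gamma$ toward $(1+\delta)/2$. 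Your version (keep $w_h$ unnormalized, interpolate the data) amounts to the same computation.

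There is, however, a genuine gap in your treatment of the piecewise $C^{1/4,1/2}$-regularity of $u_t$. The rescaling $v_h=w_h/h^{1/2}$ does \emph{not} converge to $u_t$: since $w_h(t,x)=h\,u_t(t,x)+o(h)$ wherever $u_t$ exists, one has $v_h=h^{1/2}u_t+o(h^{1/2})\to 0$ pointwise and distributionally. So the Arzel\`a--Ascoli limit is the zero function, not $u_t$. The paper instead proceeds in two stages. First, from the uniform bound $|\delta_h^\gamma u|_{(1+\delta_1)/2,1+\delta_1}\le N$ with $\gamma+\tfrac{1+\delta_1}{2}>1$ one extracts $u_t\in C^{\delta_2/2}$ \emph{in the time variable only}, $\delta_2=\delta_1+2\gamma-1$. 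Second, spatial H\"older regularity of $u_t$ is obtained by writing
\[
|u_t(t,x_1)-u_t(t,x_2)|\le 2\max_i |u_t(t,x_i)-\delta_h u(t,x_i)|+|\delta_h u(t,x_1)-\delta_h u(t,x_2)|,
\]
where $\delta_h u=w_h/h$; the first piece is $\le 2h^{\delta_2/2}[u_t]_{t,\delta_2/2}$ by Taylor's formula, and the second is $\le h^{\gamma-1}|x_1-x_2|\,|\delta_h^\gamma u|_{(1+\delta_1)/2,1+\delta_1}$. Optimizing over $h$ and taking $\gamma$ close to $(1+\delta)/2$ yields the spatial exponent $\tfrac{\delta_2}{2}\cdot\tfrac{1}{1+\delta_2/2-\gamma}\ge \tfrac12$, which is exactly where the hypothesis $\delta>\tfrac12$ enters. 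This optimization-in-$h$ step is the missing ingredient in your proposal.
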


\begin{proof}
Define
\begin{equation}\label{def-deltaf}
\delta_h^{\gamma}f(t,x):=\frac{f(t,x)-f(t-h,x)}{h^{^\gamma}},
\end{equation}
where $\gamma\in\big(0,\frac{1+\delta}{2}\big)$ and $h\in(0,\varepsilon)$.
Then from \eqref{systems}, we have
\begin{equation}\label{systemut}
-(\delta_h^\gamma u)_t+D_\alpha(A^{\alpha\beta}D_\beta \delta_h^\gamma u)=D_\alpha \delta_h^\gamma f^\alpha-D_\alpha(\delta_h^\gamma A^{\alpha\beta}D_\beta u(t-h,x)).
\end{equation}
For any $(t_1,x_1),(t_2,x_2)\in (-1+2\varepsilon,0)\times (B_{1-\varepsilon}\cap\overline{{\cD}_{j}})$ with $(t_1,x_1)\neq(t_2,x_2)$, we have
\begin{align*}
\frac{|\delta_h^{\gamma}f^\alpha(t_1,x_1)-\delta_h^{\gamma}f^{\alpha}(t_2,x_2)|}{|t_1-s_1|^{\frac{1+\delta}{2}-\gamma}+|x_1-x_2|^{1+\delta-2\gamma}}
\leq N [f^\alpha]_{(1+\delta)/2,1+\delta;(-1+\varepsilon,0)\times (B_{1-\varepsilon}\cap\overline{{\cD}_{j}})}.
\end{align*}
This means $\delta_h^{\gamma}f^\alpha\in C^{\frac{1+\delta}{2}-\gamma,1+\delta-2\gamma}((-1+2\varepsilon,0)\times(B_{1-\varepsilon}\cap\overline{{\cD}_{j}}))$. Similarly, we get $\delta_h^{\gamma}A^{\alpha\beta}\in C^{\frac{1+\delta}{2}-\gamma,1+\delta-2\gamma}((-1+2\varepsilon,0)\times(B_{1-\varepsilon}\cap\overline{{\cD}_{j}}))$. Then by applying Lemma \ref{lemlocbdd} to \eqref{systemut}, we get $\delta_h^{\gamma}u\in C^{(1+\delta_1)/2,1+\delta_1}((-1+3\varepsilon,0)\times (B_{1-2\varepsilon}\cap\overline{{\cD}_{j}}))$ with $\delta_1:=\min\{1+\delta-2\gamma,\frac{1}{2}\}>0$. Moreover,
\begin{equation}
                \label{eq11.38}
|\delta_h^{\gamma}u|_{(1+\delta_1)/2,1+\delta_1;(-1+3\varepsilon,0)\times (B_{1-2\varepsilon}\cap\overline{{\cD}_{j}})}\leq N\big(\sum_{j=1}^{M}|f|_{(1+\delta)/2,1+\delta;(-1,0)\times\overline{\cD_{j}}}+\|Du\|_{L_{1}(\cQ)}\big).
\end{equation}
Therefore, we obtain for any fixed $x\in B_{1-2\varepsilon}\cap\overline{{\cD}_{j}}$ and $\gamma\in \big(\frac{1}{4},\frac{1+\delta}{2}\big)$,
\begin{equation}\label{est-ut}
u_t(\cdot,x)\in C^{\delta_2/2}((-1+\varepsilon,0)),\quad \delta_2:=\delta_1+2\gamma-1=\min\big\{\delta,2\gamma-\frac{1}{2}\big\},
\end{equation}
and satisfies
\begin{equation*}
|u_t|_{t,\delta_2/2;(-1+\varepsilon,0)}\leq N\big(\sum_{j=1}^{M}|f|_{(1+\delta)/2,1+\delta;(-1,0)\times\overline{\cD_{j}}}+\|Du\|_{L_{1}(\cQ)}\big).
\end{equation*}
We also obtain from $\delta_h^{\gamma}u\in C^{(1+\delta_1)/2,1+\delta_1}((-1+3\varepsilon,0)\times (B_{1-2\varepsilon}\cap\overline{\cD_{j}}))$ that
\begin{equation*}
D\delta_h^{\gamma}u\in C^{\delta_1/2,\delta_1}((-1+3\varepsilon,0)\times (B_{1-2\varepsilon}\cap\overline{\cD_{j}})).
\end{equation*}
If we take $\gamma$ to be sufficiently close to $\frac{1+\delta}{2}$, then $\delta_1=1+\delta-2\gamma$, $\gamma+\frac{\delta_1}{2}=\frac{1+\delta}{2}$, and
\begin{equation*}
[Du]_{t,(1+\delta)/2;(-1+3\varepsilon,0)\times (B_{1-2\varepsilon}\cap\overline{\cD_{j}})}\leq N\big(\sum_{j=1}^{M}|f|_{(1+\delta)/2,1+\delta;(-1,0)\times\overline{\cD_{j}}}+\|Du\|_{L_{1}(\cQ)}\big).
\end{equation*}

On the other hand, for any $(t,x_1),(t,x_2)\in (-1+3\varepsilon,0)\times (B_{1-2\varepsilon}\cap\overline{{\cD}_{j}})$ with $x_1\neq x_2$, by using the triangle inequality, Taylor's formula, \eqref{eq11.38}, and \eqref{est-ut}, we have
\begin{align*}
&|u_t(t,x_1)-u_t(t,x_2)|\\
&\leq |u_t(t,x_1)-\delta_hu(t,x_1)|+|u_t(t,x_2)-\delta_hu(t,x_2)|+|\delta_hu(t,x_1)-\delta_hu(t,x_2)|\nonumber\\
&\leq 2h^{\frac{\delta_2}{2}}[u_t]_{t,\delta_2/2;(-1+3\varepsilon,0)\times (B_{1-2\varepsilon}\cap\overline{{\cD}_{j}})}
+h^{\gamma-1}|\delta_h^{\gamma}u(t,x_1)-\delta_h^{\gamma}u(t,x_2)|\\
&\leq 2h^{\frac{\delta_2}{2}}[u_t]_{t,\delta_2/2;(-1+3\varepsilon,0)\times (B_{1-2\varepsilon}\cap\overline{{\cD}_{j}})}\\
&\quad +Nh^{\gamma-1}|x_1-x_2||\delta_h^\gamma u|_{(1+\delta_1)/2,1+\delta_1;(-1+3\varepsilon,0)\times (B_{1-2\varepsilon}\cap\overline{{\cD}_{j}})}.
\end{align*}
where $\delta_hu:=\delta_h^1u$; the definition of $\delta_h^1u$ can be found in \eqref{def-deltaf}. We obtain by optimizing in $h$ that
\begin{align*}
&|u_t(t,x_1)-u_t(t,x_2)|\\
&\leq N|x_1-x_2|^{\frac{\delta_2}{2}\cdot\frac{1}{1+\delta_2/2-\gamma}}
[u_t]_{t,\delta_2/2;(-1+3\varepsilon,0)\times (B_{1-2\varepsilon}\cap\overline{{\cD}_{j}})}^{\frac{1-\gamma}{1+\delta_2/2-\gamma}}\\
&\quad \cdot|\delta_h^\gamma u|_{(1+\delta_1)/2,1+\delta_1;(-1+3\varepsilon,0)\times (B_{1-2\varepsilon}\cap\overline{{\cD}_{j}})}^{\frac{\delta_2/2}{1+\delta_2/2-\gamma}}.
\end{align*}
Now letting $\gamma=\frac{1+\delta}{2}-\varepsilon_0$ for any small constant $0<\varepsilon_0\leq \delta-\frac{1}{2}$, then $\delta_2=\delta$ and $$\frac{\delta_2}{2}\cdot\frac{1}{1+\frac{\delta_2}{2}-\gamma}=\frac{\delta}{1+2\varepsilon_0}\geq\frac{1}{2},$$
where we used the assumption that $\delta>\frac{1}{2}$. We thus obtain
\begin{equation*}
|u_t|_{1/4,1/2;(-1+3\varepsilon,0)\times (B_{1-2\varepsilon}\cap\overline{{\cD}_{j}})}\leq N\big(\sum_{j=1}^{M}|f|_{(1+\delta)/2,1+\delta;(-1,0)\times\overline{\cD_{j}}}+\|Du\|_{L_{1}(\cQ)}\big).
\end{equation*}
The lemma is proved.
\end{proof}

\begin{lemma}\label{lemma Dtildeu}
Under the same assumptions as in Proposition \ref{proptildeu},
 we have
\begin{align*}
\sum_{j=1}^{m+1}\|D^2u\|_{L_\infty(Q_{1/4}^-\cap((-1+\varepsilon,0)\times\cD_j))}\leq N\|Du\|_{L_{1}(Q_{3/4}^{-})}+N\sum_{j=1}^{M}|f|_{(1+\delta)/2,1+\delta;(-1,0)\times\overline{\cD_{j}}},
\end{align*}
where $N>0$ is a constant depending only on
$n,d,m,p,\nu,\varepsilon$, $|A|_{(1+\delta)/2,1+\delta;(-1,0)\times\overline{\cD_{j}}}$, and the $C^{2+\mu}$ norm of $h_j$.
\end{lemma}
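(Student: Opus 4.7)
The plan is to leverage Lemma \ref{lemut} (which controls $u_t$ piecewise) together with the local boundedness of $Du$ from the Appendix, applied this time to the auxiliary transmission problem \eqref{homosecond} satisfied by $u_{_\ell} = D_\ell u - u_0$. Under the standing a priori assumption that $Du$ is $C^{1/2}$ in $t$ and piecewise $C^1$ in $x$, $D^2 u$ already exists as an $L_\infty$ function on each subdomain; the task is to bound it quantitatively by $\mathcal{C}_1 := \|Du\|_{L_1(Q_{3/4}^-)} + \sum_{j=1}^M |f|_{(1+\delta)/2,1+\delta;(-1,0)\times\overline{\cD_j}}$. I will handle all tangential-involving components of $D^2 u$ via Lemma \ref{lemlocbdd} applied to $u_{_\ell}$, and recover the remaining normal-normal component from the equation.

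First, Lemma \ref{lemut} yields $\|u_t\|_{L_\infty((-1+\varepsilon,0)\times(B_{1-\varepsilon}\cap\overline{\cD_j}))}\leq N\mathcal{C}_1$. Next, for each tangential direction $\ell=\ell^k$, $k=1,\dots,d-1$, I would invoke the homogeneous transmission problem \eqref{homosecond} for $u_{_\ell}$, whose source terms $g$ and $f_3^\alpha$ (see \eqref{defg} and \eqref{tildef1}) have their $L_p$ and $L_\infty$ norms bounded by $N\mathcal{C}_1$ under the a priori assumption, combined with Lemma \ref{lemut} and the local boundedness of $Du$ (Lemma \ref{lemlocbdd}). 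Applying Lemma \ref{lemlocbdd} to $u_{_\ell}$ itself, viewed as a solution of a transmission problem of the same structure as \eqref{systems}, yields $\|Du_{_\ell}(\cdot;z_0)\|_{L_\infty(Q_{1/4}^-(z_0)\cap((-1+\varepsilon,0)\times\cD_j))}\leq N\mathcal{C}_1$, uniformly in the base point $z_0$.

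The delicate step is to convert the bound on $Du_{_\ell}$ into a bound on $D^2 u$. Expanding $Du_{_\ell} = D(D_\ell u) - Du_0 = (D\ell^i) D_i u + \ell^i D D_i u - Du_0$, the potentially singular term $(D\ell^i) D_i u$, with $|D\ell^k|\leq N|h_j-h_{j-1}|^{-1/2}$ by Lemma \ref{lemell}(iii), must be cancelled. This is precisely the role of $u_0 = \sum_j \tilde\ell_{i,j} D_i u(t_0,P_j x_0)$: in $\cD_{j_0}$ one has $\tilde\ell_{,j_0}=\ell$, so the $j=j_0$ piece of $Du_0$ contributes $(D\ell^i)(x) D_i u(t_0,x_0)$, and the singular part of $D(D_\ell u) - Du_0$ collapses to $(D\ell^i)(x)\bigl(D_i u(t,x)-D_i u(t_0,x_0)\bigr)$, which is controlled by the piecewise $C^1$ bound on $Du$ combined with Lemma \ref{lemell}(iii). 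In particular, by choosing the base point $z_0$ to coincide with the evaluation point, the singular contribution vanishes exactly, giving a pointwise bound on $\ell^i D D_i u$; since this can be done at every point and for every $k=1,\dots,d-1$, I recover $L_\infty$ control of every tangential-involving component of $D^2u$ piecewise by $N\mathcal{C}_1$.

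Finally, to recover the normal-normal second derivative, I would work in a local coordinate system at each point of $\cD_j$, with $\ell^1,\dots,\ell^{d-1}$ spanning the tangential directions and the remaining axis normal to the relevant interface. From the expanded equation $A^{\alpha\beta}D_\alpha D_\beta u = u_t - (D_\alpha A^{\alpha\beta})D_\beta u + D_\alpha f^\alpha$, the Legendre-Hadamard condition (a consequence of strong ellipticity, ensuring $A^{\alpha\beta}_{ij}n^\alpha n^\beta$ is positive definite in $i,j$) lets me solve for the normal-normal component $D_n^2 u$ in terms of tangential components of $D^2u$, $u_t$, $Du$, $DA$, and $Df$, all bounded by $N\mathcal{C}_1$. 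I expect the main obstacle to be the singular cancellation described in the third paragraph: the construction of $u_0$ and the pointwise vanishing at $z_0=z$ must be tracked carefully using the geometric estimates of Lemma \ref{lemell}(iii) in conjunction with the a priori piecewise $C^1$ assumption on $Du$, so that the $|D\ell^k|\sim|h_j-h_{j-1}|^{-1/2}$ blow-up never materializes in the final bound.
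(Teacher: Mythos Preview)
Your proposal has a genuine gap at the step where you apply Lemma~\ref{lemlocbdd} to $u_{_\ell}$. That lemma requires the right-hand side data to be piecewise $C^{\delta/2,\delta}$, but neither $f_3^\alpha$ nor $g$ in \eqref{homosecond} satisfies this, and they are not even uniformly in $L_\infty$. Indeed, $g = D_\alpha\ell\,(A^{\alpha\beta}D_\beta Du + \cdots)$ contains the factor $|D\ell|\lesssim |h_j-h_{j-1}|^{-1/2}$ multiplying $D^2u$, which is unbounded when interfaces are close. Likewise, the key term in $f_3^\alpha$ is $A^{\alpha\beta}D_\beta\ell_i\,(D_iu - D_iu(t_0,P_jx_0))$ inside $\cD_j$; away from $(t_0,P_jx_0)$ the difference $D_iu-D_iu(t_0,P_jx_0)$ is $O(1)$ while $|D_\beta\ell_i|\sim|h_j-h_{j-1}|^{-1/2}$, so this term is not $L_\infty$-bounded uniformly. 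Your later trick of evaluating at $z=z_0$ kills the singularity \emph{at that point}, but the a~priori estimate from Lemma~\ref{lemlocbdd} is a global Schauder estimate requiring the data to be regular on the whole cylinder, not just at one point; so the circularity cannot be broken this way.

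The paper's proof avoids this obstacle by never applying a Schauder-type lemma to $u_{_\ell}$. Instead it uses the Campanato decay machinery (Proposition~\ref{lemma iteraphi} and Lemma~\ref{lemma itera}) for $\tilde u=u_{_\ell}-\mathfrak u$, which only requires $L_1$ control of the data $F,G$; this is available precisely because $\int_{B_r\cap\cD_j}|D\ell|\,dx\le Nr^{d-1/2}$ (see \eqref{estDellk0}), an integrated bound that has no $L_\infty$ analogue. From the decay one extracts the pointwise estimate \eqref{estDtildeuU},
\[
|D_{\ell^{k'}}\tilde u(z_0;z_0)|+|\tilde U(z_0;z_0)|\le Nr^{\mu'}\sum_j\|D^2u\|_{L_\infty(Q_r^-(z_0)\cap\cdots)}+Nr^{-1}\mathcal C_1,
\]
solves for $D^2u(z_0)$ via Cramer's rule (your last step, which is correct), and then absorbs the small $Nr^{\mu'}\|D^2u\|_{L_\infty}$ term by the iteration of \cite[Lemma~3.4]{dx2019}. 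The essential missing idea in your sketch is this absorption structure with a small factor $r^{\mu'}$, which is what makes the argument close without ever needing $L_\infty$ or H\"older control of $f_3^\alpha$ and $g$.
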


\begin{proof}
We prove this lemma in two steps.

{\bf Step 1.} We first prove the claim: For any $k'=1,\ldots,d-1$, $z_0\in (-1+\varepsilon,0)\times (\cD_{\varepsilon}\cap{{\cD}_{j}})$, and $r\in(0,1/4)$,
\begin{align}\label{estDtildeuU}
&|D_{\ell^{k'}}\tilde u(z_0;z_0)|+|\tilde U(z_0;z_0)|\nonumber\\
&\leq Nr^{\mu'}\sum_{j=1}^{m+1}\|D^2u\|_{L_\infty(Q_{r}^-(z_0)\cap((-1+\varepsilon,0)\times\cD_j))}\notag\\
&\quad +Nr^{-1}\big(\sum_{j=1}^{M}|f|_{(1+\delta)/2,1+\delta;(-1,0)\times\overline{\cD_{j}}}+\|Du\|_{L_{1}(\cQ)}\big),
\end{align}
where $\tilde u$ and $\tilde U$ are defined in \eqref{def-tildeu} and \eqref{deftildeU}, respectively, and  $\mu'=\min\big\{\frac{1}{2},\mu\big\}$. Indeed, for any $s\in(0,1)$, let $\mathbf q_{z_{0},s}^{k'},\mathbf Q_{z_{0},s}\in\mathbb R^n$  be chosen such that
$$\Phi(z_{0},s)=\left(\fint_{Q_{s}^{-}(z_{0})}\big(|D_{\ell^{k'}}\tilde u(z;z_0)-\mathbf q_{z_{0},s}^{k'}|^{\frac{1}{2}}+|\tilde U(z;z_0)-\mathbf Q_{z_{0},s}|^{\frac{1}{2}}\big)\,dz\right)^{2}.$$
Using the triangle inequality, we have
\begin{align*}
|\mathbf q_{z_{0},s/2}^{k'}-\mathbf q_{z_{0},s}^{k'}|^{\frac{1}{2}}\leq|D_{\ell^{k'}}\tilde u(z;z_0)-\mathbf q_{z_{0}, s/2}^{k'}|^{\frac{1}{2}}+|D_{\ell^{k'}}\tilde u(z;z_0)-\mathbf q_{z_{0},s}^{k'}|^{\frac{1}{2}}
\end{align*}
and
\begin{align*}
|\mathbf Q_{z_{0},s/2}-\mathbf Q_{z_{0},s}|^{\frac{1}{2}}\leq|\tilde U(z;z_0)-\mathbf Q_{z_{0},s/2}|^{\frac{1}{2}}+|\tilde U(z;z_0)-\mathbf Q_{z_{0},s}|^{\frac{1}{2}}.
\end{align*}
Now taking the average over $z\in Q_{s/2}^{-}(z_{0})$ and then taking the square, we obtain
\begin{align*}
|\mathbf q_{z_{0},s/2}^{k'}-\mathbf q_{z_{0},s}^{k'}|+|\mathbf Q_{z_{0},s/2}-\mathbf Q_{z_{0},s}|\leq N(\Phi(z_{0},s/2)+\Phi(z_{0},s)).
\end{align*}
By iterating and using the triangle inequality, we deduce
\begin{equation}\label{itera}
|\mathbf q_{z_{0},2^{-L}s}^{k'}-\mathbf q_{z_{0},s}^{k'}|+|\mathbf Q_{z_{0},2^{-L} s}-\mathbf Q_{z_{0},s}|\leq N\sum_{j=0}^{L}\Phi(z_{0},2^{-j} s).
\end{equation}
Recalling the definitions of $\tilde u$ and $u_{_\ell}$ in \eqref{def-tildeu} and \eqref{tildeu}, respectively,  a direct calculation gives
\begin{align}\label{Dellk'tildeu}
D_{\ell^{k'}}\tilde u(z;z_0)=\ell_i^k\ell_j^{k'}D_iD_ju+D_{\ell^{k'}}\ell_i D_iu-\sum_{j=1}^{m+1}D_{\ell^{k'}}\tilde\ell_{i,j} D_iu(t_0,P_jx_0)-D_{\ell^{k'}}{\mathfrak u}.
\end{align}
Using \eqref{def-tildefalpha}, we have
\begin{align}\label{tildeU}
&\tilde U(z;z_0)=n^\alpha(A^{\alpha\beta}D_\beta\tilde u-\tilde f^\alpha)\nonumber\\
&=n^\alpha\Big(A^{\alpha\beta}D_\beta D_i u\ell_i^k-D_{\ell}f^\alpha+D_\ell A^{\alpha\beta}D_\beta u-A^{\alpha\beta}D_\beta{\mathfrak u}\nonumber\\
&\quad-\delta_{\alpha d}\sum_{j=1}^{m}\mathbbm{1}_{x^d>h_j(x')} (n^d_j(x'))^{-1}\tilde h_j(t,x')-A^{\alpha\beta}\sum_{j=1}^{m+1}\mathbbm{1}_{_{(-1,0)\times\cD_j^c}}D_\beta \tilde\ell_{i,j}D_iu(t_0,P_jx_0)\Big).
\end{align}
From the assumption $Du$ is piecewise $C^1$ in $x$, $A^{\alpha\beta},f^\alpha\in C^{(1+\delta)/2,1+\delta}((-1+\varepsilon,0)\times (\cD_{\varepsilon}\cap\overline{{\cD}_{j}}))$, \eqref{eq9.46}, and Lemma \ref{lemell} (ii), it follows that $D_{\ell^{k'}}\tilde u(z;z_0),\tilde U(z;z_0)\in C^0((-1+\varepsilon,0)\times (\cD_{\varepsilon}\cap\overline{{\cD}_{j}}))$.
Taking $\rho=2^{-L} s$ in \eqref{est phi'}, we have
$$\lim_{L\rightarrow\infty}\Phi(z_{0},2^{-L} s)=0.$$
Thus, for any $z_0\in (-1+\varepsilon,0)\times (\cD_{\varepsilon}\cap{\cD}_{j})$,  we obtain
\begin{equation*}
\lim_{L\rightarrow\infty}\mathbf q_{z_{0},2^{-L}s}^{k'}=D_{\ell^{k'}}\tilde u(z_{0};z_0),\quad \lim_{L\rightarrow\infty}\mathbf Q_{z_{0},2^{-L} s}=\tilde U(z_{0};z_0).
\end{equation*}
Now taking $L\rightarrow\infty $ in \eqref{itera}, choosing $s=r/2$, and using Lemma \ref{lemma itera}, we have for $r\in(0,1/4)$, $k'=1,\ldots,d-1$, and $z_0\in (-1+\varepsilon,0)\times (\cD_{\varepsilon}\cap{{\cD}_{j}})$,
\begin{align}\label{diffDtildeu}
&|D_{\ell^{k'}}\tilde u(z_{0};z_0)-\mathbf q_{z_{0},r/2}^{k'}|+|\tilde U(z_{0};z_0)-\mathbf Q_{z_{0},r/2}|\leq N\sum_{j=0}^{\infty}\Phi(z_{0},2^{-j-1}r)\nonumber\\
&\leq N\Phi(z_{0},r/2)+Nr^{\mu'}\Big(\sum_{j=1}^{m+1}
[Du]_{1/2,1;Q_{r}^-(z_0)\cap((-1+\varepsilon,0)\times\overline{\cD_j})}\nonumber\\
&\quad
+\sum_{j=1}^{M}|f|_{(1+\delta)/2,1+\delta;(-1,0)\times\overline{\cD_{j}}}+\|Du\|_{L_{1}(\cQ)}\Big).
\end{align}
By averaging the inequality
\begin{align*}
&|\mathbf q_{z_{0},r/2}^{k'}|+|\mathbf Q_{z_{0},r/2}|\\
&\leq |D_{\ell^{k'}}\tilde u(z;z_0)-\mathbf q_{z_{0},r/2}^{k'}|+|\tilde U(z;z_0)-\mathbf Q_{z_{0},r/2}|+|D_{\ell^{k'}}\tilde u(z;z_0)|+|\tilde U(z;z_0)|
\end{align*}
over $z\in Q_{r/2}^{-}(z_{0})$ and then taking the square, we have
\begin{align*}
&|\mathbf q_{z_{0},r/2}^{k'}|+|\mathbf Q_{z_{0},r/2}|\\
&\leq N\Phi(z_0,r/2)+N\left(\fint_{Q_{r/2}^{-}(z_{0})}\big(|D_{\ell^{k'}}\tilde u(z;z_0)|^{\frac{1}{2}}+|\tilde U(z;z_0)|^{\frac{1}{2}}\big)\,dz\right)^{2}.
\end{align*}
This, in combination with \eqref{diffDtildeu}, Lemmas \ref{lemut}, \ref{lemma Dtildeu}, the triangle inequality, and
\begin{align*}
\Phi(z_{0},r/2)&\leq \left(\fint_{Q_{r/2}^{-}(z_{0})}\big(|D_{\ell^{k'}}\tilde u(z;z_0)|^{\frac{1}{2}}+|\tilde U(z;z_0)|^{\frac{1}{2}}\big)\,dz\right)^{2}\\
&\leq Nr^{-d-2}\Big(\|D_{\ell^{k'}}\tilde u(\cdot;z_0)\|_{L_{1}(Q_{r/2}^{-}(z_{0}))}+\|\tilde U(\cdot;z_0)\|_{L_{1}(Q_{r/2}^{-}(z_{0}))}\Big),
\end{align*}
leads to that
\begin{align}\label{Dx'Uz0}
&|D_{\ell^{k'}}\tilde u(z_0;z_0)|+|\tilde U(z_0;z_0)|\nonumber\\
&\leq Nr^{-d-2}\Big(\|D_{\ell^{k'}}\tilde u(\cdot;z_0)\|_{L_{1}(Q_{r/2}^{-}(z_{0}))}+\|\tilde U(\cdot;z_0)\|_{L_{1}(Q_{r/2}^{-}(z_{0}))}\Big)\nonumber\\
&\quad+Nr^{\mu'}\Big(\sum_{j=1}^{m+1}\|D^2u\|_{L_\infty(Q_{r}^-(z_0)\cap((-1+\varepsilon,0)\times\cD_j))}\notag\\
&\qquad\qquad+\sum_{j=1}^{M}|f|_{(1+\delta)/2,1+\delta;(-1,0)\times\overline{\cD_{j}}}+\|Du\|_{L_{1}(\cQ)}\Big).
\end{align}

Next we estimate $\|D_{\ell^{k'}}\tilde u(\cdot;z_0)\|_{L_{1}(Q_{r/2}^{-}(z_{0}))}$ and $\|\tilde U(\cdot;z_0)\|_{L_{1}(Q_{r/2}^{-}(z_{0}))}$ on the right-hand side above. Using the definition of weak solutions, the transmission problem \eqref{homosecond} is equivalent to
$$-\partial_tu_{_\ell}+D_\alpha(A^{\alpha\beta}D_\beta u_{_\ell})=g+ D_\alpha f_3^\alpha\quad\mbox{in}~Q_1^-.$$
Then by Lemma \ref{lem loc lq} with a suitable scaling, we obtain
\begin{align*}
&\|Du_{_\ell}(\cdot;z_0)\|_{L_2(Q_{r/2}^-(z_0))}\\
&\leq N\big(r^{-1-\frac{d+2}{2}}\|u_{_\ell}(\cdot;z_0)\|_{L_1(Q_{r}^-(z_0))}+r\|g\|_{L_2(Q_{r}^-(z_0))}+\|f_3(\cdot;z_0)\|_{L_2(Q_{r}^-(z_0))}\big).
\end{align*}
By using H\"{o}lder's inequality, we have
\begin{align}\label{Duell_L1}
\|Du_{_\ell}(\cdot;z_0)\|_{L_1(Q_{r/2}^-(z_0))}&\leq N\big(r^{-1}\|u_{_\ell}(\cdot;z_0)\|_{L_1(Q_{r}^-(z_0))}+r^{1+\frac{d+2}{2}}\|g\|_{L_2(Q_{r}^-(z_0))}\nonumber\\
&\quad+r^{\frac{d+2}{2}}\|f_3(\cdot;z_0)\|_{L_2(Q_{r}^-(z_0))}\big).
\end{align}
From the definition of $u_{_\ell}$ in \eqref{tildeu} and the local boundedness of $Du$ in Lemma \ref{lemlocbdd}, we get
\begin{equation}\label{est-uell}
\|u_{_\ell}(\cdot;z_0)\|_{L_1(Q_{r}^-(z_0))}\leq Nr^{d+2}\Big(\|Du\|_{L_{1}(\cQ)}+\sum_{j=1}^{M}|f|_{(1+\delta)/2,1+\delta;(-1,0)\times\overline{\cD_{j}}}\Big).
\end{equation}
Using Lemma \ref{lemell}(iii) , we have
\begin{align*}
\int_{B_r(x_0)\cap\cD_i}|D\ell^k|^2\ dx\leq N\int_{B_r'(x'_0)}\frac{\min\{2r,h_i-h_{i-1}\}}{|h_{i}-h_{i-1}|}\ dx'\leq Nr^{d-1}.
\end{align*}
Together with this, we have from \eqref{defg} and \eqref{tildef1} that
\begin{align}\label{est-g1}
\|g\|_{L_2(Q_{r}^-(z_0))}&\leq Nr^{\frac{d+1}{2}}\Big(\sum_{j=1}^{m+1}\|D^2u\|_{L_\infty(Q_{r}^-(z_0)\cap((-1+\varepsilon,0)\times\cD_j))}+\|Du\|_{L_{1}(\cQ)}\nonumber\\
&\qquad\quad+\sum_{j=1}^{M}|f|_{(1+\delta)/2,1+\delta;(-1,0)\times\overline{\cD_{j}}}\Big)
\end{align}
and
\begin{align}\label{est-f1breve}
\|f_3(\cdot;z_0)\|_{L_2(Q_{r}^-(z_0))}&\leq Nr^{\frac{d+3}{2}}\sum_{j=1}^{m+1}[Du]_{1/2,1;Q_{r}^-(z_0)\cap((-1+\varepsilon,0)\times\overline\cD_j)}\nonumber\\ &\quad+Nr^{\frac{d+2}{2}}\Big(\|Du\|_{L_{1}(\cQ)}+\sum_{j=1}^{M}|f|_{(1+\delta)/2,1+\delta;(-1,0)\times\overline{\cD_{j}}}\Big).
\end{align}
Substituting \eqref{est-uell}--\eqref{est-f1breve}  into \eqref{Duell_L1}, and using Lemma \ref{lemut}, we have
\begin{align*}
\|Du_{_\ell}(\cdot;z_0)\|_{L_1(Q_{r/2}^-(z_0))}&\leq Nr^{d+\frac{5}{2}}\sum_{j=1}^{m+1}\|D^2u\|_{L_\infty(Q_{r}^-(z_0)\cap((-1+\varepsilon,0)\times\cD_j))}\\
&\quad+Nr^{d+1}\Big(\|Du\|_{L_{1}(\cQ)}+\sum_{j=1}^{M}|f|_{(1+\delta)/2,1+\delta;(-1,0)\times\overline{\cD_{j}}}\Big).
\end{align*}
Now we obtain from the definition of $\tilde u$ in \eqref{def-tildeu}  and  \eqref{eq9.46} that
\begin{align*}
\|D\tilde u(\cdot;z_0)\|_{L_1(Q_{r/2}^-(z_0))}&\leq\|Du_{_\ell}(\cdot;z_0)\|_{L_1(Q_{r/2}^-(z_0))}+\|D{\mathfrak u}\|_{L_1(Q_{r/2}^-(z_0))}\nonumber\\
&\leq Nr^{d+\frac{5}{2}}\sum_{j=1}^{m+1}\|D^2u\|_{L_\infty(Q_{r}^-(z_0)\cap((-1+\varepsilon,0)\times\cD_j))}\nonumber\\
&\quad+Nr^{d+1}\Big(\|Du\|_{L_{1}(\cQ)}+\sum_{j=1}^{M}|f|_{(1+\delta)/2,1+\delta;(-1,0)\times\overline{\cD_{j}}}\Big).
\end{align*}
Similarly, we have
\begin{align*}
\|\tilde U(\cdot;z_0)\|_{L_1(Q_{r/2}^-(z_0))}
&\leq Nr^{d+\frac{5}{2}}\sum_{j=1}^{m+1}\|D^2u\|_{L_\infty(Q_{r}^-(z_0)\cap((-1+\varepsilon,0)\times\cD_j))}\\
&\quad+Nr^{d+1}\Big(\|Du\|_{L_{1}(\cQ)}+\sum_{j=1}^{M}|f|_{(1+\delta)/2,1+\delta;(-1,0)\times\overline{\cD_{j}}}\Big).
\end{align*}
Coming back to \eqref{Dx'Uz0}, we obtain \eqref{estDtildeuU}.

{\bf Step 2.} We finish the proof of the desired result.
Since $k,k'=1,\ldots,d-1$, there are $\frac{nd(d-1)}{2}$ equations in \eqref{Dellk'tildeu} and $n(d-1)$ equations in \eqref{tildeU}. Thus we have $\frac{n(d-1)(d+2)}{2}$ equations, while $D^2u$ has $\frac{nd(d+1)}{2}$ components. A simple calculation gives
$$
\frac{nd(d+1)}{2}-\frac{n(d-1)(d+2)}{2}=n,
$$
which implies that we have to consider $n$ more equations. To this end,  we  rewrite the equation \eqref{systems} as
\begin{equation}\label{Dalpbetau}
A^{\alpha\beta}D_{\alpha\beta}u=D_\alpha f^\alpha-D_\alpha A^{\alpha\beta}D_{\beta}u+u_t
\end{equation}
in $(-1+\varepsilon,0)\times(B_{1-\varepsilon}\cap\cD_j)$, $j=1,\ldots,M$. We shall use Cramer's rule and  \eqref{Dellk'tildeu}, \eqref{tildeU}, and \eqref{Dalpbetau} to solve for $D^{2}u$. However, it is not easy to verify that whether the determinant of the coefficient matrix of
$$(D_{1}^2u,\ldots,D_{1}D_{d} u,D_{2}^2 u,\ldots,D_{2}D_{d} u,\ldots,D_{d}^2 u)^\top$$
is not equal to $0$. For this,  we introduce the linear transformation $\Lambda$ from the coordinate system associated with $0$ to the coordinate system associated with the fixed point $x\in B_r(x_0)$, given by $\Lambda^k=\ell^k(x)$ and $\Lambda^d=n(x)$, $k=1,\ldots,d-1$, where $\ell^k(x)$ and $n(x)$ are defined in \eqref{defell} and \eqref{defnorm}, respectively. Now we define
$$y=\Lambda x,\quad\hat u(t,y)=u(t,x),\quad \hat A(t,y)=\Lambda A(t,x)\Lambda^\top.$$
Then
\begin{equation*}
\ell_i^k(x)\ell_j^{k'}(x)D_iD_ju(t,x)=D_{k}D_{k'}\hat u(t,y),\quad
n^\alpha A^{\alpha\beta}D_\beta D_i u\ell_i^k=\hat{A}^{d\beta}(t,y)D_{\beta}D_{k}\hat u(t,y),
\end{equation*}
and
\begin{equation*}
A^{\alpha\beta}D_{\alpha\beta}u=\hat A^{\alpha\beta}(t,y)D_{\alpha\beta}\hat u(t,y).
\end{equation*}
A direct calculation yields the  determinant of the coefficient matrix of
$$(D_{1}^2\hat u,\ldots,D_{1}D_{d}\hat u,D_{2}^2\hat u,\ldots,D_{2}D_{d}\hat u,\ldots,D_{d}^2\hat u)^\top$$
is $(\hat A^{dd})^{d}>0$. Therefore, by using Cramer's rule, we can solve for $D^2\hat u$ and thus $D^{2}u$. In particular, by using Lemma \ref{lemlocbdd} and \eqref{deftildeh}, we have
\begin{align*}
|D^2u(z_0)|&\leq N\Big(|D_{\ell^{k'}}\tilde u(z_0;z_0)|+|\tilde U(z_0;z_0)|+|u_t(z_0)|+|D{\mathfrak u}(z_0)|\Big)\\
&\quad+N\big(\sum_{j=1}^{M}|f|_{(1+\delta)/2,1+\delta;(-1,0)\times\overline{\cD_{j}}}+\|Du\|_{L_{1}(\cQ)}\big).
\end{align*}
Now combining \eqref{estDtildeuU}, Lemma \ref{lemut},  and \eqref{eq9.46}, we deduce
\begin{align*}
|D^2u(z_0)|
&\leq Nr^{\mu'}\sum_{j=1}^{m+1}\|D^2u\|_{L_\infty(Q_{r}^-(z_0)\cap((-1+\varepsilon,0)\times\cD_j))}\\
&\quad+Nr^{-1}\big(\sum_{j=1}^{M}|f|_{(1+\delta)/2,1+\delta;(-1,0)\times\overline{\cD_{j}}}+\|Du\|_{L_{1}(\cQ)}\big).
\end{align*}
For any $z_1\in Q_{1/4}^-$ and $r\in(0,1/4)$, by taking supremum with respect to $z_0\in Q_r^-(z_1)\cap ((-1+\varepsilon,0)\times\cD_j)$, we have
\begin{align*}
&\sum_{j=1}^{m+1}\|D^2u\|_{L_\infty(Q_{r}^-(z_1)\cap((-1+\varepsilon,0)\times\cD_j))}\nonumber\\
&\leq Nr^{\mu'}\sum_{j=1}^{m+1}\|D^2u\|_{L_\infty(Q_{2r}^-(z_1)\cap((-1+\varepsilon,0)\times\cD_j))}\\
&\quad +Nr^{-1}\Big(\sum_{j=1}^{M}|f|_{(1+\delta)/2,1+\delta;(-1,0)\times\overline{\cD_{j}}}+\|Du\|_{L_{1}(\cQ)}\Big),
\end{align*}
where $\mu'=\min\big\{\frac{1}{2},\mu\big\}$. By an iteration argument which is essentially the same as that in \cite[Lemma 3.4]{dx2019}, we get
\begin{align*}
\sum_{j=1}^{m+1}\|D^2u\|_{L_\infty(Q_{1/4}^-\cap((-1+\varepsilon,0)\times\cD_j))}\leq N\|Du\|_{L_{1}(Q_{3/4}^{-})}+N\sum_{j=1}^{M}|f|_{(1+\delta)/2,1+\delta;(-1,0)\times\overline{\cD_{j}}},
\end{align*}
and the lemma is proved.
\end{proof}

\subsection{Proof of Proposition \ref{proptildeu}}\label{prfprop}
We are in a position to finish the proof of Proposition \ref{proptildeu}.
\begin{proof}[{\bf Proof of Proposition \ref{proptildeu}}]
(a)  By using Lemmas \ref{lemma itera}, \ref{lemut}, \ref{lemma Dtildeu}, and \ref{lemmeanDu} with $Q_{1/4}^-$ in place of $Q_r^-(z_0)$, we have for $r\in (0,1/8)$,
\begin{align}\label{estsupphi}
\sup_{z_{0}\in Q_{1/8}^{-}}\Phi(z_{0},r)&\leq Nr^{\mu'}\Big(\|D\tilde u\|_{L_1(Q_{1/4}^-)}+\|\tilde f\|_{L_1(Q_{1/4}^-)}+\sum_{j=1}^{m+1}[Du]_{1/2,1;Q_{1/4}^-\cap((-1+\varepsilon,0)\times\overline\cD_j)}\nonumber\\
&\quad+\sum_{j=1}^{M}|f|_{(1+\delta)/2,1+\delta;(-1,0)\times \overline{\cD_{j}}}+\|Du\|_{L_{1}(\cQ)}\Big)\nonumber\\
&\leq Nr^{\mu'}\Big(\sum_{j=1}^{M}|f|_{(1+\delta)/2,1+\delta;(-1,0)\times \overline{\cD_{j}}}+\|Du\|_{L_{1}(\cQ)}\Big).
\end{align}
Applying \eqref{diffDtildeu} with $r$ in place of $r/2$  and using \eqref{estsupphi}, we derive
\begin{align}\label{estDk'DdU}
&\sup_{z_{0}\in Q_{1/8}^{-}}\big(|D_{\ell^{k'}}\tilde u(z_{0};z_0)-\mathbf q_{z_{0},r}^{k'}|+|\tilde U(z_{0};z_0)-\mathbf Q_{z_{0},r}|\big)\nonumber\\
&\leq Nr^{\mu'}\Big(\sum_{j=1}^{M}|f|_{(1+\delta)/2,1+\delta;(-1,0)\times \overline{\cD_{j}}}+\|Du\|_{L_{1}(\cQ)}\Big),
\end{align}
where $\mathbf q_{z_{0},r}^{k'},\mathbf Q_{z_{0},r}\in\mathbb R^n$  satisfy	
$$
\Phi(z_{0},r)=\left(\fint_{Q_{r}^{-}(z_{0})}\big(|D_{\ell^{k'}}\tilde u(z;z_0)-\mathbf q_{z_{0},r}^{k'}|^{\frac{1}{2}}+|\tilde U(z;z_0)-\mathbf Q_{z_{0},r}|^{\frac{1}{2}}\big)\,dz\right)^{2}.
$$

Suppose that $z_{1}=(t_1,x_1)\in Q_{1/8}^{-}\cap((-1+\varepsilon,0)\times\cD_{j_{1}})$ for some $j_{1}\in\{1,\ldots,m+1\}$. If $|z_{0}-z_{1}|_{p}\geq1/16$, then we  obtain from \eqref{Dellk'tildeu}  that
\begin{align}\label{Dellk'tildeu00}
&D_{\ell^{k'}}\tilde u(z_0;z_0)\notag\\
&=\ell_i^k(x_0)\ell_j^{k'}(x_0)D_iD_ju(z_0)-\sum_{j=1,j\neq j_0}^{m+1}D_{\ell^{k'}}\tilde\ell_{i,j}(x_0) D_iu(t_0,P_jx_0)-D_{\ell^{k'}}{\mathfrak u}(z_0).
\end{align}
Thus, by using Lemma \ref{lemlocbdd}, Lemma \ref{lemma Dtildeu}, and \eqref{eq9.46}, we have
\begin{align}
&|D_{\ell^{k'}}\tilde u(z_{0};z_0)-D_{\ell^{k'}}\tilde u(z_{1};z_1)|\notag\\
&\leq N\sum_{j=1}^{m+1}\|D^2u\|_{L_\infty(Q_{1/4}^-\cap((-1+\varepsilon,0)\times\cD_j))}+N\|D{\mathfrak u}\|_{L_\infty(Q_{1/4}^-)}\notag\\
&\quad+N\big(\|Du\|_{L_{1}(\cQ)}+\sum_{j=1}^{M}
|f|_{(1+\delta)/2,1+\delta;(-1,0)\times\overline{\cD_{j}}}\big)\notag\\
\label{esttildeu}
&\leq |z_{0}-z_{1}|_{p}^{\mu'}\Big(\sum_{j=1}^{M}|f|_{(1+\delta)/2,1+\delta;(-1,0)\times \overline{\cD_{j}}}+\|Du\|_{L_{1}(\cQ)}\Big).
\end{align}
From \eqref{tildeU}, it follows that
\begin{align}\label{tildeU00}
\tilde U(z_0;z_0)&=n^\alpha(x_0)\Big(A^{\alpha\beta}(z_0)D_\beta D_i u(z_0)\ell_i^k(x_0)-D_{\ell}f^\alpha(z_0)+D_\ell A^{\alpha\beta}(z_0)D_\beta u(z_0)\nonumber\\
&\quad-A^{\alpha\beta}(z_0)D_\beta{\mathfrak u}(z_0)-\delta_{\alpha d}\sum_{j=1}^{m}\mathbbm{1}_{x^d>h_j(x')} (n^d_j(x'_0))^{-1}\tilde h_j(t_0,x'_0)\nonumber\\
&\quad-A^{\alpha\beta}(z_0)\sum_{j=1,j\neq j_0}^{m+1}\mathbbm{1}_{_{(-1,0)\times\cD_j^c}}D_\beta \tilde\ell_{i,j}(x_0)D_iu(t_0,P_jx_0)\Big).
\end{align}
Using  a similar argument in deriving \eqref{esttildeu}, we obtain
\begin{align*}
|\tilde U(z_{0};z_0)-\tilde U(z_{1};z_1)|
\leq |z_{0}-z_{1}|_{p}^{\mu'}\Big(\sum_{j=1}^{M}|f|_{(1+\delta)/2,1+\delta;(-1,0)\times \overline{\cD_{j}}}+\|Du\|_{L_{1}(\cQ)}\Big).
\end{align*}

If $|z_{0}-z_{1}|_{p}<1/16$, then we set $r=|z_{0}-z_{1}|_{p}$. Without loss of generality, we assume that $z_1$ is above $z_0$.
By the triangle inequality, we have for any $z\in Q_{r}^{-}(z_{0})$,
\begin{equation}\label{differenceDu}
\begin{split}
&|D_{\ell^{k'}}\tilde u(z_{0};z_0)-D_{\ell^{k'}}\tilde u(z_{1};z_1)|^{\frac{1}{2}}+|\tilde U(z_{0};z_0)-\tilde U(z_{1};z_1)|^{\frac{1}{2}}\\
&\leq|D_{\ell^{k'}}\tilde u(z_{0};z_0)-\mathbf q_{z_{0},r}^{k'}|^{\frac{1}{2}}+|D_{\ell^{k'}}\tilde u(z;z_0)-\mathbf q_{z_{0},r}^{k'}|^{\frac{1}{2}}+|D_{\ell^{k'}}\tilde u(z;z_1)-\mathbf q_{z_{1},2r}^{k'}|^{\frac{1}{2}}\\
&\quad+|D_{\ell^{k'}}\tilde u(z;z_0)-D_{\ell^{k'}}\tilde u(z;z_1)|^{\frac{1}{2}}+|D_{\ell^{k'}}\tilde u(z_{1};z_1)-\mathbf q_{z_{1},2r}^{k'}|^{\frac{1}{2}}\\
&\quad+|\tilde U(z_{0};z_0)-\mathbf Q_{z_{0},r}|^{\frac{1}{2}}+|\tilde U(z;z_0)-\mathbf Q_{z_{0},r}|^{\frac{1}{2}}+|\tilde U(z;z_1)-\mathbf Q_{z_1,2r}|^{\frac{1}{2}}\\
&\quad+|\tilde U(z;z_0)-\tilde U(z;z_1)|^{\frac{1}{2}}+|\tilde U(z_1;z_1)-\mathbf Q_{z_{1},2r}|^{\frac{1}{2}},
\end{split}
\end{equation}
where $\mathbf q_{z_{1},2r}^{k'},\mathbf Q_{z_{1},2r}\in\mathbb R^n$, $k'=1,\ldots,d-1$, satisfy	
$$
\Phi(z_{1},2r)=\left(\fint_{Q_{2r}^{-}(z_{1})}\big(|D_{\ell^{k'}}\tilde u(z;z_1)-\mathbf q_{z_{1},2r}^{k'}|^{\frac{1}{2}}+|\tilde U(z;z_1)-\mathbf Q_{z_{1},2r}|^{\frac{1}{2}}\big)\,dz\right)^{2}.
$$
Now we take the average over $z\in Q_{r}^{-}(z_{0})$ and take the  square in \eqref{differenceDu} to obtain
\begin{equation}\label{diffDtildeuU}
\begin{split}
&|D_{\ell^{k'}}\tilde u(z_{0};z_0)-D_{\ell^{k'}}\tilde u(z_{1};z_1)|+|\tilde U(z_{0};z_0)-\tilde U(z_{1};z_1)|\\
&\leq|D_{\ell^{k'}}\tilde u(z_0;z_0)-\mathbf q_{z_{0},r}^{k'}|+|\tilde U(z_{0};z_0)-\mathbf Q_{z_{0},r}|+\Phi(z_0,r)+\Phi(z_1,2r)\\
&\quad+|D_{\ell^{k'}}\tilde u(z_{1};z_1)-\mathbf q_{z_{1},2r}^{k'}|+|\tilde U(z_1;z_1)-\mathbf Q_{z_{1},2r}|\\
&\quad+\left(\fint_{Q_{r}^{-}(z_{0})}\big(|D_{\ell^{k'}}\tilde u(z;z_0)-D_{\ell^{k'}}\tilde u(z;z_1)|^{\frac{1}{2}}+|\tilde U(z;z_0)-\tilde U(z;z_1)|^{\frac{1}{2}}\big)\,dz\right)^{2}.
\end{split}
\end{equation}

Next we estimate the last  term in \eqref{diffDtildeuU}. For any $x_0\in B_{1/8}\cap\overline\cD_{j_0}$ and $x_1\in B_{1/8}\cap\overline\cD_{j_1}$, since $r=|z_{0}-z_{1}|_{p}$,  by using \eqref{Pjx} and $h_j\in C^{2+\mu}$, we have
\begin{equation*}
|P_jx_0-P_jx_1|\leq N|x_0-x_1|.
\end{equation*}
This together with Lemmas \ref{lemut} and \ref{lemma Dtildeu} yields
\begin{align}\label{DuPjx}
&\left|Du(t_0,P_jx_0)-Du(t_1,P_jx_1)\right|\notag\\
&\leq Nr[Du]_{1/2,1;Q_{1/4}^-\cap((-1+\varepsilon,0)\times\overline\cD_j)}\nonumber\\
&\leq Nr\Big(\sum_{j=1}^{M}|f|_{(1+\delta)/2,1+\delta;(-1,0)\times \overline{\cD_{j}}}+\|Du\|_{L_{1}(\cQ)}\Big).
\end{align}
It follows from \eqref{eq-rmu} that $\tilde{\mathfrak u}_j:={\mathfrak u}_j(z;z_0)-{\mathfrak u}_j(z;z_1)\in\mathcal{H}_p^1(Q_1^-)$ satisfies
\begin{align*}
\begin{cases}
&-\partial_t\tilde{\mathfrak u}_j+D_{\alpha}(\boldsymbol{A}^{\alpha\beta}D_\beta \tilde{\mathfrak u}_j)\\
&=-D_{\alpha}(\mathbbm{1}_{_{(-1,0)\times\cD_j^c}}A^{\alpha\beta}D_\beta \tilde\ell_{i,j}(D_iu(t_0,P_jx_0)-D_iu(t_1,P_jx_1)))\quad \mbox{in}~Q_1^-,\\
&\tilde{\mathfrak u}_j=0\quad \mbox{on}~\partial_pQ_1^-.
\end{cases}
\end{align*}
Then by using Lemmas \ref{lemlocbdd}, \ref{solvability}, \eqref{DuPjx}, and the fact that $\mathbbm{1}_{_{(-1,0)\times\cD_j^c}}D_\beta \tilde\ell_{i,j}$ is piecewise $C^\mu$, we obtain
\begin{align*}
&|{\mathfrak u}(\cdot;z_0)-{\mathfrak u}(\cdot;z_1)|_{(1+\mu')/2,1+\mu';(-1+\varepsilon,0)\times(\overline{\cD_i}\cap B_{1-\varepsilon})}\nonumber\\
&\leq N\sum_{j=1}^{m+1}\|D\tilde{\mathfrak u}_j\|_{L_1(Q_{1}^-)}+N\sum_{j=1}^{m+1}|\mathbbm{1}_{_{(-1,0)\times\cD_j^c}}A^{\alpha\beta}D_\beta \tilde\ell_{i,j}(D_iu(t_0,P_jx_0)-D_iu(t_1,P_jx_1))|_{\mu/2,\mu;Q_1^-}\nonumber\\
&\leq N\sum_{j=1}^{m+1}\|\mathbbm{1}_{_{(-1,0)\times\cD_j^c}}A^{\alpha\beta}D_\beta \tilde\ell_{i,j}(D_iu(t_0,P_jx_0)-D_iu(t_1,P_jx_1))\|_{L_p(Q_{1}^-)}\nonumber\\
&\quad+Nr\big(\|Du\|_{L_{1}(\cQ)}+\sum_{j=1}^{M}|f|_{(1+\delta)/2,1+\delta;(-1,0)\times\overline{\cD_{j}}}\big)\nonumber\\
&\leq Nr\big(\|Du\|_{L_{1}(\cQ)}+\sum_{j=1}^{M}|f|_{(1+\delta)/2,1+\delta;(-1,0)\times\overline{\cD_{j}}}\big),
\end{align*}
where
$$
{\mathfrak u}(\cdot;z_0)-{\mathfrak u}(\cdot;z_1):=\sum_{j=1}^{m+1}\tilde{\mathfrak u}_j,\quad i=1,\ldots,m+1,
$$ and $\mu'=\min\{\mu,\frac{1}{2}\}$.
This together with  \eqref{def-tildeu}, \eqref{defu0}, and \eqref{DuPjx} yields
\begin{align}\label{diffDuz0z1}
&|D_{\ell^{k'}}\tilde u(z;z_0)-D_{\ell^{k'}}\tilde u(z;z_1)|\nonumber\\
&=\Big|\sum_{j=1}^{m+1}D_{\ell^{k'}}\tilde{\ell}_{i,j}\big(D_iu(t_0,P_jx_0)-D_iu(t_1,P_jx_1)\big)
+D_{\ell^{k'}}{\mathfrak u}(z;z_0)-D_{\ell^{k'}}{\mathfrak u}(z;z_1)\Big|\nonumber\\
&\leq Nr\Big(\sum_{j=1}^{M}|f|_{(1+\delta)/2,1+\delta;(-1,0)\times \overline{\cD_{j}}}+\|Du\|_{L_{1}(\cQ)}\Big).
\end{align}
Similarly,
\begin{align}\label{diffUz0z1}
|\tilde U(z;z_0)-\tilde U(z;z_1)|\leq Nr\Big(\sum_{j=1}^{M}|f|_{(1+\delta)/2,1+\delta;(-1,0)\times \overline{\cD_{j}}}+\|Du\|_{L_{1}(\cQ)}\Big).
\end{align}
Coming back to \eqref{diffDtildeuU}, and using \eqref{estsupphi}, \eqref{estDk'DdU},  \eqref{diffDuz0z1}, and \eqref{diffUz0z1}, we obtain
\begin{align*}
&|D_{\ell^{k'}}\tilde u(z_{0};z_0)-D_{\ell^{k'}}\tilde u(z_{1};z_1)|+|\tilde U(z_{0};z_0)-\tilde U(z_{1};z_1)|\\
&\leq Nr^{\mu'}\Big(\sum_{j=1}^{M}|f|_{(1+\delta)/2,1+\delta;(-1,0)\times \overline{\cD_{j}}}+\|Du\|_{L_{1}(\cQ)}\Big).
\end{align*}
	
(b) As showed in Step 2 of the proof of Lemma \ref{lemma Dtildeu},  from \eqref{Dalpbetau} with $z=z_0$, \eqref{Dellk'tildeu00}, and \eqref{tildeU00}, we find that $D^{2}u(z_0)$ is a combination of
\begin{equation}\label{uteq}
D_\alpha f^\alpha(z_0)-D_\alpha A^{\alpha\beta}(z_0)D_{\beta}u(z_0)+u_t(z_0),
\end{equation}
\begin{equation}\label{Dtildeueq}
D_{\ell^{k'}}\tilde u(z_0;z_0)+\sum_{j=1,j\neq j_0}^{m+1}D_{\ell^{k'}}\tilde\ell_{i,j}(x_0) D_iu(t_0,P_jx_0)+D_{\ell^{k'}}{\mathfrak u}(z_0),
\end{equation}
and
\begin{align}\label{tildeUeq}
&\tilde U(z_0;z_0)+n^\alpha(x_0)\Big(D_{\ell}f^\alpha(z_0)-D_\ell A^{\alpha\beta}(z_0)D_\beta u(z_0)+A^{\alpha\beta}(z_0)D_\beta{\mathfrak u}(z_0)\nonumber\\
&\,+\delta_{\alpha d}\sum_{j=1}^{m}\mathbbm{1}_{x^d>h_j(x')} (n^d_j(x'_0))^{-1}\tilde h_j(t_0,x'_0)\notag\\
&\,+A^{\alpha\beta}(z_0)\sum_{j=1,j\neq j_0}^{m+1}\mathbbm{1}_{_{(-1,0)\times\cD_j^c}}D_\beta \tilde\ell_{i,j}(x_0)D_iu(t_0,P_jx_0)\Big).
\end{align}
Similarly, for any $\tilde z_{0}\in (-1+\varepsilon,0)\times (B_{1-\varepsilon}\cap\overline{\cD}_{j_0})$, $D^{2}u(\tilde z_0)$ is a combination of \eqref{uteq}--\eqref{tildeUeq} with $z_0$ replaced with $\tilde z_0$. Combining  \eqref{holdertildeuU}, Lemma \ref{lemut}, and \eqref{eq9.46}, we have
$$[D^2u]_{\mu'/2,\mu';(-1+\varepsilon,0)\times (B_{1-\varepsilon}\cap\overline{\cD}_{j_0})}
\le N\Big(\sum_{j=1}^{M}|f|_{(1+\delta)/2,1+\delta;(-1,0)\times \overline{\cD_{j}}}+\|Du\|_{L_{1}(\cQ)}\Big)$$
for any $j_0=1,\ldots,m+1$. Proposition \ref{proptildeu} is proved.
\end{proof}

\section{General  \texorpdfstring{$C^{(s+1+\mu')/2,s+1+\mu'}$}{} estimates}\label{secgeneral}
In this section, we prove Theorem \ref{Mainthm} in the general case of $s\geq2$  by showing the key points and the main ingredients. We prove by induction on $s$ that if $A^{\alpha\beta}$ and $f^\alpha$ are piecewise $C^{(s-1+\delta)/2,s-1+\delta}$  and the interfacial boundaries are $C^{s+\mu}$, then any $\mathcal{H}_{p}^{1}(\cQ)$ weak solution  $u$ to \eqref{systems} is piecewise $C^{(s+\mu')/2,s+\mu'}$, with the estimate
\begin{align}\label{pieceu}
&|u|_{(s+\mu')/2,s+\mu';(-1+\varepsilon,0)\times (\cD_{\varepsilon}\cap\overline{{\cD}_{j_0}})}\notag\\
&\leq N\Big(\|Du\|_{L_{1}((-1,0)\times \cD)}+\sum_{j=1}^{M}|f|_{(s-1+\delta)/2,s-1+\delta;(-1,0)\times \overline{\cD_{j}}}\Big),
\end{align}
where $j_0=1,\ldots,m+1$, $\mu'=\min\big\{\frac{1}{2},\mu\big\}$, $N$ depends on $n,d,m,p,\nu,\varepsilon$, the $C^{s+\mu}$ characteristic of $\cD_{j}$, and $|A|_{(s-1+\delta)/2,s-1+\delta;(-1,0)\times\overline{\cD_{j}}}$.
Now suppose  that $A^{\alpha\beta}$ and  $f^{\alpha}$ are of the class $C^{(s+\delta)/2,s+\delta}((-T+\varepsilon,0)\times(\cD_{\varepsilon}\cap\overline{\cD}_j))$, and the interfacial boundaries are $C^{s+1+\mu}$. We shall prove that $u$ is piecewise $C^{(s+1+\mu')/2,s+1+\mu'}$.

As in \eqref{eq6.26}, we will first derive a new equation satisfied by $D_{\ell}\cdots D_{\ell}u$ which is the $s$-th directional derivative of $u$ along the vector fields $\ell^k,k=1,\ldots,d-1$. For this,  since $D_\ell(fg)=gD_\ell f+fD_\ell g$, by an induction argument for $s\geq 2$, we conclude
\begin{align}\label{Dlu}
D_{\ell}\cdots D_{\ell}u
=\ell_{i_1}\ell_{i_2}\cdots\ell_{i_{_{s}}}D_{i_1}D_{i_2}\cdots D_{i_{_{s}}}u+R(u),
\end{align}
where we used the Einstein summation convention over repeated indices, $\ell_{i_{\tau}}:=\ell_{i_{\tau}}^{k_{\tau}}$, $\tau=1,\ldots,s$, $k_{\tau}=1,\ldots,d-1$, $i_{\tau}=1,\ldots,d$, and
\begin{align}\label{defLu}
R(u)&=D_{\ell_{i_1}}(\ell_{i_{2}}\cdots\ell_{i_{_{s}}})D_{i_{2}}\cdots D_{i_{_{s}}}u\nonumber\\
&\quad+D_{\ell_{i_1}}\Bigg(D_{\ell_{i_2}}(\ell_{i_{3}}\cdots\ell_{i_{_{s}}})D_{i_{3}}\cdots D_{i_{_{s}}}u+D_{\ell_{i_2}}\Big(D_{\ell_{i_3}}(\ell_{i_{4}}\cdots\ell_{i_{_{s}}})D_{i_{4}}\cdots D_{i_{_{s}}}u\nonumber\\
&\qquad\qquad+D_{\ell_{i_3}}\big(D_{\ell_{i_4}}(\ell_{i_{5}}\cdots\ell_{i_{_{s}}})D_{i_{5}}\cdots D_{i_{_{s}}}u+\cdots+D_{\ell_{i_{s-2}}}(D_{\ell_{i_{s-1}}}\ell_{i_s}D_{i_s}u)\big)\Big)\Bigg),
\end{align}
which is the summation of the products of directional derivatives of $\ell$ and derivatives of $u$.
Furthermore, an induction argument gives
\begin{align*}
D_{i_1}D_{i_2}\cdots D_{i_{_{s}}}(A^{\alpha\beta}D_\beta u)
&=A^{\alpha\beta}D_\beta D_{i_1}D_{i_2}\cdots D_{i_{_{s}}}u+D_{i_1}A^{\alpha\beta}D_\beta D_{i_2}\cdots D_{i_{_{s}}}u\\
&\quad+\sum_{\tau=1}^{s-1}D_{i_1}\cdots D_{i_{\tau}}(D_{i_{\tau+1}}A^{\alpha\beta}D_\beta D_{i_{\tau+2}}\cdots D_{i_{_{s}}}u).
\end{align*}
Then we have
\begin{align}\label{lDu}
&\ell_{i_1}\ell_{i_2}\cdots\ell_{i_{_{s}}}D_{i_1}D_{i_2}\cdots D_{i_{_{s}}} D_\alpha(A^{\alpha\beta}D_\beta u)\nonumber\\
&=D_\alpha\big(\ell_{i_1}\ell_{i_2}\cdots\ell_{i_{_{s}}}D_{i_1}D_{i_2}\cdots D_{i_{_{s}}} (A^{\alpha\beta}D_\beta u)\big)-D_\alpha(\ell_{i_1}\ell_{i_2}\cdots\ell_{i_{_{s}}})D_{i_1}D_{i_2}\cdots D_{i_{_{s}}}(A^{\alpha\beta}D_\beta u)\nonumber\\
&=D_\alpha\big(A^{\alpha\beta}D_\beta(\ell_{i_1}\ell_{i_2}\cdots\ell_{i_{_{s}}}D_{i_1}D_{i_2}\cdots D_{i_{_{s}}}u)\big)-D_\alpha\big(A^{\alpha\beta}D_\beta(\ell_{i_1}\ell_{i_2}\cdots\ell_{i_{_{s}}})D_{i_1}D_{i_2}\cdots D_{i_{_{s}}}u\big)\nonumber\\
&\quad+D_\alpha\Big(\ell_{i_1}\ell_{i_2}\cdots\ell_{i_{_{s}}}\big(D_{i_1}A^{\alpha\beta}D_\beta D_{i_2}\cdots D_{i_{_{s}}}u+\sum_{\tau=1}^{s-1}D_{i_1}\cdots D_{i_{\tau}}(D_{i_{\tau+1}}A^{\alpha\beta}D_\beta D_{i_{\tau+2}}\cdots D_{i_{_{s}}}u)\big)\Big)\nonumber\\
&\quad-D_\alpha(\ell_{i_1}\ell_{i_2}\cdots\ell_{i_{_{s}}})D_{i_1}D_{i_2}\cdots D_{i_{_{s}}}(A^{\alpha\beta}D_\beta u)
\end{align}
and
\begin{align}\label{lDf}
&\ell_{i_1}\ell_{i_2}\cdots\ell_{i_{_{s}}}D_{i_1}D_{i_2}\cdots D_{i_{_{s}}} D_\alpha f^\alpha\nonumber\\
&=D_\alpha(\ell_{i_1}\ell_{i_2}\cdots\ell_{i_{_{s}}}D_{i_1}D_{i_2}\cdots D_{i_{_{s}}} f^\alpha)-D_\alpha(\ell_{i_1}\ell_{i_2}\cdots\ell_{i_{_{s}}})D_{i_1}D_{i_2}\cdots D_{i_{_{s}}} f^\alpha.
\end{align}
Taking $D_\ell\cdots D_{\ell}$ to the equation $-u_t+D_\alpha(A^{\alpha\beta}D_\beta u)=D_\alpha f^\alpha$, and using \eqref{Dlu}, \eqref{lDu}, and \eqref{lDf}, we obtain the equation
\begin{align*}
-(D_{\ell}\cdots D_{\ell}u)_t+D_\alpha\big(A^{\alpha\beta}D_\beta (D_{\ell}\cdots D_{\ell}u)\big)=D_\alpha \hat f_1^\alpha+\breve g
\end{align*}
in each subdomain $(-1,0)\times \cD_j$, where
\begin{align}\label{deff2}
\hat f_1^\alpha&:=\ell_{i_1}\ell_{i_2}\cdots\ell_{i_{_{s}}}D_{i_1}D_{i_2}\cdots D_{i_{_{s}}}f^\alpha\notag\\
&\quad +A^{\alpha\beta}D_\beta(\ell_{i_1}\ell_{i_2}\cdots\ell_{i_{_{s}}})D_{i_1}D_{i_2}\cdots D_{i_{_{s}}}u+A^{\alpha\beta}D_\beta(R(u))\nonumber\\
&\quad-\ell_{i_1}\ell_{i_2}\cdots\ell_{i_{_{s}}}\big(D_{i_1}A^{\alpha\beta}D_\beta D_{i_2}\cdots D_{i_{_{s}}}u+\sum_{\tau=1}^{s-1}D_{i_1}\cdots D_{i_{\tau}}(D_{i_{\tau+1}}A^{\alpha\beta}D_\beta D_{i_{\tau+2}}\cdots D_{i_{_{s}}}u)\big)
\end{align}
and
\begin{align}\label{defg2}
\breve g:=D_\alpha(\ell_{i_1}\ell_{i_2}\cdots\ell_{i_{_{s}}})\big(D_{i_1}D_{i_2}\cdots D_{i_{_{s}}}(A^{\alpha\beta}D_\beta u-f^\alpha)\big)+R(D_\alpha(f^\alpha-A^{\alpha\beta}D_\beta u)).
\end{align}
Similarly, by taking $D_{\ell}\cdots D_\ell$ to $[n^\alpha_j(A^{\alpha\beta} D_\beta u -f^\alpha)]_{(-1,0)\times\Gamma_j}=0$, we derive the boundary condition
$$[n_j^\alpha (A^{\alpha\beta} D_\beta (D_{\ell}\cdots D_{\ell}u)- \hat f_1^\alpha)]_{(-1,0)\times\Gamma_j}=\breve h_j,$$
where
\begin{align}\label{brevehj}
\breve h_j&:=\Big[-\ell_{i_1}\ell_{i_2}\cdots\ell_{i_{_{s}}}\big(\sum_{\tau=1}^{s}D_{i_{\tau}}n_j^\alpha D_{i_1}\cdots D_{\tau_{s-1}}D_{i_{\tau+1}}\cdots D_{i_s}(A^{\alpha\beta}D_\beta u-f^\alpha)\nonumber\\
&\quad+\sum_{1\leq \tau_1<\tau_2\leq s}D_{i_{\tau_1}}D_{i_{\tau_2}}n_j^\alpha D_{i_1}\cdots D_{i_{\tau_1}-1}D_{i_{\tau_1}+1}\cdots D_{i_{\tau_2}-1}D_{i_{\tau_2}+1}\cdots D_{i_s}(A^{\alpha\beta}D_\beta u-f^\alpha)\nonumber\\
&\quad+\cdots+D_{i_1}D_{i_2}\cdots D_{i_s}n_j^\alpha (A^{\alpha\beta}D_\beta u-f^\alpha)\big)\Big]_{(-1,0)\times\Gamma_j}\notag\\
&\quad -[R(n_j^\alpha (A^{\alpha\beta} D_\beta u- f^\alpha))]_{(-1,0)\times\Gamma_j}.
\end{align}
Consequently, the $s$-th directional derivative $D_{\ell}\cdots D_{\ell}u$ satisfies
\begin{align}\label{eqDllu}
\begin{cases}
-(D_{\ell}\cdots D_{\ell}u)_t+D_\alpha\big(A^{\alpha\beta}D_\beta (D_{\ell}\cdots D_{\ell}u)\big)=D_\alpha \hat f_1^\alpha+\breve g\quad\text{in}\,\bigcup_{j=1}^{m+1}(-1,0)\times\cD_j, \\
[n_j^\alpha (A^{\alpha\beta} D_\beta (D_{\ell}\cdots D_{\ell}u)- \hat f_1^\alpha)]_{(-1,0)\times\Gamma_j}=\breve h_j.
\end{cases}
\end{align}
Note that the terms  $D_\beta(\ell_{i_1}\ell_{i_2}\cdots\ell_{i_{_{s}}})$ and $D_\beta(R(u))$ in \eqref{deff2} are singular at any point where two interfaces touch or are close to each other. To cancel out the singularity, we choose a function $u_0$ as follows:
\begin{align}\label{defu-0}
&u_0:=u_0(x;z_0)\notag\\
&=\sum_{j=1}^{m+1}\tilde\ell_{i_{1},j}\tilde\ell_{i_{2},j}\cdots\tilde\ell_{i_{s},j}D_{i_1}D_{i_2}\cdots D_{i_{_{s}}}u(t_0,P_jx_0)\nonumber\\
&\quad+\sum_{j=1}^{m+1}\sum_{\tau=1}^{s-1}D_{\tilde\ell_{i_1,j}}D_{\tilde\ell_{i_2,j}}\cdots D_{\tilde\ell_{i_\tau},j}(\tilde\ell_{i_{\tau+1},j}\cdots\tilde\ell_{i_{_{s}},j})\big(D_{i_{\tau+1}}\cdots D_{i_{_{s}}}u(t_0,P_jx_0)\nonumber\\
&\quad+(x-x_0)\cdot DD_{i_{\tau+1}}\cdots D_{i_{_{s}}}u(t_0,P_jx_0)\big)+\cdots\nonumber\\
&\quad+\sum_{j=1}^{m+1}(D_{\tilde\ell_{i_{s-1},j}}\tilde \ell_{i_s,j})\tilde\ell_{i_{1},j}\tilde\ell_{i_{2},j}\cdots\tilde\ell_{i_{s-2},j}\big(D_{i_1}D_{i_2}\cdots D_{i_{_{s-2}}}D_{i_{_{s}}}u(t_0,P_jx_0)\nonumber\\
&\quad+(x-x_0)\cdot DD_{i_{1}}D_{i_{2}}\cdots D_{i_{_{s-2}}}D_{i_{_{s}}}u(t_0,P_jx_0)\big),
\end{align}
where $P_jx_0$ is defined in \eqref{Pjx},
$(t_0,x_{0})\in (-9/16,0)\times (B_{3/4}\cap \cD_{j_{0}})$, $r\in(0,1/4)$, $\tilde\ell_{,j}$ is the smooth extension of $\ell|_{\cD_j}$ to $\cup_{k=1,k\neq j}^{m+1}\cD_k$. Denote
\begin{align*}
u^{\ell}:=u^{\ell}(z;z_0)=D_{\ell}\cdots D_{\ell}u-u_0.
\end{align*}
Then \eqref{eqDllu} is equivalent to
\begin{align}\label{eq Dbreveu00}
\begin{cases}
-\partial_tu^{\ell}+D_\alpha(A^{\alpha\beta}D_\beta u^{\ell})=D_\alpha (\hat f_1^\alpha-A^{\alpha\beta}D_\beta u_0)+\breve g\quad\text{in}\,\bigcup_{j=1}^{m+1}(-1,0)\times\cD_j, \\
[n_j^\alpha (A^{\alpha\beta} D_\beta u^{\ell}-\hat f_1^\alpha+A^{\alpha\beta}D_\beta u_0)]_{(-1,0)\times\Gamma_j}=\breve h_j.
\end{cases}
\end{align}
As before, by adding a term
$$
\sum_{j=1}^{m} D_d(\mathbbm{1}_{x^d>h_j(x')} (n^d_j(x'))^{-1}\breve h_j(t,x'))
$$
to the equation, the problem \eqref{eq Dbreveu00} becomes a (homogeneous) transmission problem
\begin{align*}
\begin{cases}
-\partial_t u^{\ell}+D_\alpha(A^{\alpha\beta}D_\beta u^{\ell})=D_\alpha \hat f_2^\alpha+\breve g\quad\text{in}\,\bigcup_{j=1}^{m+1}(-1,0)\times\cD_j, \\
[n_j^\alpha (A^{\alpha\beta} D_\beta u^{\ell}-\hat f_2^\alpha)]_{(-1,0)\times\Gamma_j}=0,
\end{cases}
\end{align*}
where
\begin{align*}
\hat f_2^\alpha:=\hat f_2^\alpha(z;z_0)=\hat f_1^\alpha-A^{\alpha\beta}D_\beta u_0+\delta_{\alpha d}\sum_{j=1}^m\mathbbm{1}_{x^d>h_j(x')} (n^d_j(x'))^{-1}\breve h_j(t,x').
\end{align*}
With the function $u_0$, we can bound the mean oscillation of $\hat f_2^\alpha$ in cylinders. To make it vanish in a certain order,  we shall further consider the problem
\begin{align}\label{eqmathsfu}
\begin{cases}
-\partial_t{\mathsf u}_j+D_\alpha(\boldsymbol{A}^{\alpha\beta}D_\beta {\mathsf u}_j)=-D_{\alpha}\big(A^{\alpha\beta}F_\beta\big)&\mbox{in}~Q_1^-,\\
{\mathsf u}_j=0& \mbox{on}~\partial_pQ_1^-,
\end{cases}
\end{align}
where ${\mathsf u}_j:={\mathsf u}_j(\cdot;z_0)\in \mathcal{H}_p^1(Q_1^-)$, the coefficient $\boldsymbol{A}^{\alpha\beta}$ is defined in \eqref{mathcalA}, and
\begin{align}\label{defmathF}
F_\beta&:=\sum_{j=1}^{m+1}\mathbbm{1}_{_{(-1,0)\times\cD_j^c}}D_\beta (\tilde\ell_{i_{1},j}\tilde\ell_{i_{2},j}\cdots\tilde\ell_{i_{s},j})D_{i_1}D_{i_2}\cdots D_{i_{_{s}}}u(t_0,P_jx_0)+\cdots\nonumber\\
&\quad+\sum_{j=1}^{m+1}\mathbbm{1}_{_{(-1,0)\times\cD_j^c}}D_\beta\big((D_{\tilde\ell_{i_{s-1},j}}\tilde \ell_{i_s,j})\tilde\ell_{i_{1},j}\tilde\ell_{i_{2},j}\cdots\tilde\ell_{i_{s-2},j}\big)\big(D_{i_1}D_{i_2}\cdots D_{i_{_{s-2}}}D_{i_{_{s}}}u(t_0,P_jx_0)\nonumber\\
&\quad\quad+(x-x_0)\cdot DD_{i_{1}}D_{i_{2}}\cdots D_{i_{_{s-2}}}D_{i_{_{s}}}u(t_0,P_jx_0)\big)\notag\\
&\quad+\sum_{j=1}^{m+1}\mathbbm{1}_{_{(-1,0)\times\cD_j^c}}
(D_{\tilde\ell_{i_{s-1},j}}\tilde \ell_{i_s,j})\tilde\ell_{i_{1},j}\tilde\ell_{i_{2},j}\cdots
\tilde\ell_{i_{s-2},j} D_\beta D_{i_{1}}D_{i_{2}}\cdots D_{i_{_{s-2}}}D_{i_{_{s}}}u(t_0,P_jx_0),
\end{align}
which is the summation of the products of $\mathbbm{1}_{_{(-1,0)\times\cD_j^c}}$ and derivatives of $u_0$ defined in \eqref{defu-0}.
Combining Lemma \ref{solvability} and  \eqref{pieceu}, we have
\begin{align}\label{estmathu}
\|{\mathsf u}_j\|_{\mathcal{H}_p^1(Q_1^-)}\leq N\|A^{\alpha\beta}F_\beta\|_{L_p(Q_1^-)}\leq N\big(\|Du\|_{L_{1}(\cQ)}+\sum_{j=1}^{M}|f|_{(s-1+\delta)/2,s-1+\delta;(-1,0)\times\overline{\cD_{j}}}\big).
\end{align}
Using the fact that $\tilde\ell_{,j}$  is the smooth extension  of $\ell|_{\cD_j}$ to $\cup_{k=1,k\neq j}^{m+1}\cD_k$, one can verify that the right-hand side of the equation in \eqref{eqmathsfu} is piecewise $C^{\mu/2,\mu}$. Then from Lemma \ref{lemlocbdd}, it follows that  ${\mathsf u}_j(\cdot;z_0)\in C^{(1+\mu')/2,1+\mu'}((-1+\varepsilon,0)\times(\overline{\cD_i}\cap B_{1-\varepsilon}))$, and satisfies
\begin{align*}
&|{\mathsf u}_j|_{(1+\mu')/2,1+\mu';(-1+\varepsilon,0)\times(\overline{\cD_i}\cap B_{1-\varepsilon})}\\
&\leq N\|D{\mathsf u}_j\|_{L_1(Q_{1}^-)}+N|A^{\alpha\beta}F_\beta|_{\mu/2,\mu;Q_1^-}\nonumber\\
&\leq N\big(\|Du\|_{L_{1}(\cQ)}+\sum_{j=1}^{M}|f|_{(s-1+\delta)/2,s-1+\delta;(-1,0)\times\overline{\cD_{j}}}\big),
\end{align*}
where we used \eqref{estmathu}, \eqref{defu-0}, and \eqref{pieceu} in the second inequality, $\mu'=\min\{\mu,\frac{1}{2}\}$ and $i=1,\ldots,m+1$.
Thus, we obtain
\begin{equation}\label{defsfu}
{\mathsf u}:={\mathsf u}(\cdot;z_0)=\sum_{j=1}^{m+1}{\mathsf u}_j(\cdot;z_0)\in C^{(1+\mu')/2,1+\mu'}((-1+\varepsilon,0)\times(\overline{\cD_i}\cap B_{1-\varepsilon}))
\end{equation}
and
\begin{align}\label{est-Dmathu}
&{|\mathsf u_j|_{(1+\mu')/2,1+\mu';(-1+\varepsilon,0)\times(\overline{\cD_i}\cap B_{1-\varepsilon})}}\notag\\
&\leq N\big(\|Du\|_{L_{1}(\cQ)}+\sum_{j=1}^{M}|f|_{(s-1+\delta)/2,s-1+\delta;(-1,0)\times\overline{\cD_{j}}}\big).
\end{align}
Define
\begin{equation}\label{defbreveu}
\breve u:=\breve u(z;z_0)=u^{\ell}-{\mathsf u}.
\end{equation}
Then $\breve u$ satisfies
\begin{equation}\label{eqbreveu}
-\breve u_t+D_\alpha(A^{\alpha\beta}D_\beta \breve u)=\breve g+ D_\alpha \breve f^\alpha\quad\mbox{in}~Q_{3/4}^-,
\end{equation}
where
\begin{align}\label{def-brevefalpha}
\breve f^\alpha&:=\breve f^\alpha(z;z_0)=\hat f_2^\alpha(z;z_0)+A^{\alpha\beta}F_\beta\nonumber\\
&=\ell_{i_1}\ell_{i_2}\cdots\ell_{i_{_{s}}}D_{i_1}D_{i_2}\cdots D_{i_{_{s}}}f^\alpha+A^{\alpha\beta}D_\beta(\ell_{i_1}\ell_{i_2}\cdots\ell_{i_{_{s}}})D_{i_1}D_{i_2}\cdots D_{i_{_{s}}}u+A^{\alpha\beta}D_\beta(R(u))\nonumber\\
&\quad-\ell_{i_1}\ell_{i_2}\cdots\ell_{i_{_{s}}}\big(D_{i_1}A^{\alpha\beta}D_\beta D_{i_2}\cdots D_{i_{_{s}}}u+\sum_{\tau=1}^{s-1}D_{i_1}\cdots D_{i_{\tau}}(D_{i_{\tau+1}}A^{\alpha\beta}D_\beta D_{i_{\tau+2}}\cdots D_{i_{_{s}}}u)\big)\nonumber\\
&\quad+\delta_{\alpha d}\sum_{j=1}^m\mathbbm{1}_{x^d>h_j(x')} (n^d_j(x'))^{-1}\breve h_j(t,x')-A^{\alpha\beta}\tilde F_\beta,
\end{align}
and
\begin{align}\label{deftildeF}
\tilde F_\beta&:=\sum_{j=1}^{m+1}\mathbbm{1}_{_{(-1,0)\times\cD_j}}D_\beta (\ell_{i_{1}}\ell_{i_{2}}\cdots\ell_{i_{s}})D_{i_1}D_{i_2}\cdots D_{i_{_{s}}}u(t_0,P_jx_0)+\cdots\nonumber\\
&\quad+\sum_{j=1}^{m+1}\mathbbm{1}_{_{(-1,0)\times\cD_j}}D_\beta\big((D_{\ell_{i_{s-1}}} \ell_{i_s})\ell_{i_{1}}\ell_{i_{2}}\cdots\ell_{i_{s-2}}\big)\big(D_{i_1}D_{i_2}\cdots D_{i_{_{s-2}}}D_{i_{_{s}}}u(t_0,P_jx_0)\nonumber\\
&\quad\quad+(x-x_0)\cdot DD_{i_{1}}D_{i_{2}}\cdots D_{i_{_{s-2}}}D_{i_{_{s}}}u(t_0,P_jx_0)\big)\notag\\
&\quad+\sum_{j=1}^{m+1}\mathbbm{1}_{_{(-1,0)\times\cD_j}}
(D_{\ell_{i_{s-1}}}\ell_{i_s})\ell_{i_{1}}\ell_{i_{2}}\cdots
\ell_{i_{s-2}} D_\beta D_{i_{1}}D_{i_{2}}\cdots D_{i_{_{s-2}}}D_{i_{_{s}}}u(t_0,P_jx_0).
\end{align}
The mean oscillation of $\breve f^\alpha$ vanishes at a certain rate (see the proof of \eqref{estFhigh} below). Hence, it suffices to prove the piecewise regularity of $\breve u$.

Denote
$$\breve U:=\breve U(z;z_0)=n^\alpha(A^{\alpha\beta}D_\beta \breve u-\breve f^\alpha).$$
We are going to prove that $D_{\ell^{k}}\breve u$ and $\breve U$ are piecewise $C^{\mu'/2,\mu'}$, $k=1,\ldots,d-1$. Similar to the argument in Section \ref{C2delta}, we will prove the following proposition.
\begin{proposition}\label{proptbreveu}
Let $\varepsilon\in(0,1)$ and $p\in(1,\infty)$. Suppose that $A^{\alpha\beta}$ and $f^\alpha$ satisfy Assumption \ref{assump}  with $s\geq2$. If $u\in \mathcal{H}_{p}^{1}((-1,0)\times B_{1})$ is a weak solution to
$$-u_{t}+D_{\alpha}(A^{\alpha\beta}D_{\beta}u)=D_{\alpha}f^\alpha\quad\mbox{in}~ (-1,0)\times B_{1},$$
then the following assertions hold.

\begin{enumerate}
\item For any $z_{0},z_1\in  (-1+\varepsilon,0)\times B_{1-\varepsilon}$, we have for $k'=1,\ldots,d-1$,
\begin{align}\label{holderbreveuU}
&|(D_{\ell^{k}}\breve u(z_{0};z_0),\breve U(z_{0};z_0))-(D_{\ell^{k}}\breve u(z_{1};z_1),\breve U(z_{1};z_1))|\nonumber\\
&\leq N|z_{0}-z_{1}|_p^{\mu'}\Big(\|Du\|_{L_{1}((-1,0)\times B_{1})}+\sum_{j=1}^{M}|f|_{(s+\delta)/2,s+\delta;(-1,0)\times \overline{\cD_{j}}}\Big),
\end{align}
where $\mu'=\min\big\{\frac{1}{2},\mu\big\}$, $N$ depends on $n,d,m,p,\nu,\varepsilon,|A|_{(s+\delta)/2,s+\delta;(-1,0)\times\overline{\cD_{j}}}$, and the $C^{s+1+\mu}$ characteristic of $\cD_{j}$.
\item It holds that $u\in C^{(s+1+\mu')/2,s+1+\mu'}((-1+\varepsilon,0)\times (B_{1-\varepsilon}\cap\overline{{\cD}_{j_0}}))$,  $j_0=1,\ldots,m+1$, and
\begin{align*}
&|u|_{(s+1+\mu')/2,s+1+\mu';(-1+\varepsilon,0)\times (\cD_{\varepsilon}\cap\overline{{\cD}_{j_0}})}\\
&\leq N\Big(\|Du\|_{L_{1}((-1,0)\times \cD)}+\sum_{j=1}^{M}|f|_{(s+\delta)/2,s+\delta;(-T,0)\times \overline{\cD_{j}}}\Big),
\end{align*}
where $N$ depends on $n,d,m,p,\nu,\varepsilon,|A|_{(s+\delta)/2,s+\delta;(-1,0)\times\overline{\cD_{j}}}$, and the $C^{s+1+\mu}$ characteristic of $\cD_{j}$.
\end{enumerate}
\end{proposition}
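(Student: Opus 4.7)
The plan is to mirror the scheme developed for Proposition~\ref{proptildeu} in Section~\ref{C2delta}, running an induction on $s$ with the inductive hypothesis \eqref{pieceu} in force. The inductive hypothesis provides us with piecewise $C^{(s+\mu')/2,s+\mu'}$ regularity of $u$ (so derivatives up to order $s$ of $u$ are bounded and piecewise H\"older in each subdomain, with bounds independent of the distance between interfaces), which is exactly what is needed to make sense of the quantities $R(u)$, $u_0$, $u^\ell$, ${\mathsf u}$, $\breve u$, $\breve f^\alpha$, $\breve g$, $\breve h_j$ introduced in \eqref{defLu}--\eqref{deftildeF}. The key observation is that the problem \eqref{eqbreveu} for $\breve u$ has exactly the same structure as \eqref{eqtildeu} for $\tilde u$ in the $s=1$ case, with $(D_\ell u-u_0)$ replaced by $(D_\ell\cdots D_\ell u-u_0)$, so we can carry the entire argument of Section~\ref{C2delta} through with the appropriate substitutions.

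Concretely, I would define the analog of $\Phi(z_0,r)$,
\[
\breve\Phi(z_0,r):=\inf_{\mathbf q^{k'},\mathbf Q\in\mathbb R^n}\Bigl(\fint_{Q_r^-(z_0)}\bigl(|D_{\ell^{k'}}\breve u(z;z_0)-\mathbf q^{k'}|^{1/2}+|\breve U(z;z_0)-\mathbf Q|^{1/2}\bigr)\,dz\Bigr)^2,
\]
and establish the decay estimate
\[
\breve\Phi(z_0,\rho)\le N(\rho/r)^{\mu'}\breve\Phi(z_0,r/2)+N\rho^{\mu'}\breve{\mathcal C}_0,
\]
where $\breve{\mathcal C}_0$ plays the role of $\mathcal C_0$ in \eqref{defC0} but involves the norms $[D^s u]_{1/2,1}$, $|f|_{(s+\delta)/2,s+\delta}$, $\|Du\|_{L_1}$. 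The proof proceeds in three stages as before: first apply the change of coordinates $y=\Lambda x$ to work in a flattened frame and control the error $|D_{\ell^k}\breve u-D_{y^k}\breve{\mathfrak u}|+|\breve U-(\mathcal A^{d\beta}D_{y^\beta}\breve{\mathfrak u}-\breve{\mathfrak f}^d)|$ by $\sqrt r\,(|D\breve u|+|\breve f|)$ as in \eqref{difference-coor}; second, freeze the coefficients in the tangential variables and apply the weak type-$(1,1)$ estimate (Lemma~\ref{weak est barv}) to the difference $v$ solving the equation with the driving term $F=(\overline{\mathcal A^{\alpha\beta}}(y^d)-\mathcal A^{\alpha\beta})D_{y^\beta}\breve{\mathfrak u}+(\text{piecewise smooth singular parts of }\breve{\mathfrak f}^\alpha)+(\breve{\mathfrak f}_2^\alpha-\bar{\breve{\mathfrak f}}_2^\alpha)$ and $G=\breve{\mathfrak g}$; third, dilation/Campanato iteration using Lemma~\ref{lemma xn} for the remainder $w$ which solves a system with $y^d$-dependent coefficients.

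The auxiliary ingredients follow in the same way. I would establish the analog of Lemma~\ref{lemmeanDu} by exploiting Lemma~\ref{lemell}(iii) to absorb the singular factors $D\ell$ into the volume integrals of thin strips (the $r^{-1}$ blow-up is balanced by a factor $|h_j-h_{j-1}|^{1/2}$ giving a net gain $r^{1/2}$), and the analog of Lemma~\ref{lemmaFG} using both the piecewise $C^{\mu'/2,\mu'}$-regularity of the inhomogeneous term $\breve f^\alpha_2$ (the non-singular piece of $\breve f^\alpha$) and the Schauder-type estimate \eqref{est-Dmathu} for ${\mathsf u}$. To prove the analog of Lemma~\ref{lemut} (piecewise $C^{(s-1+\mu')/2,s-1+\mu'}$-regularity of $u_t$), I would differentiate the equation \eqref{systems} in $t$, rewrite it as a transmission problem for $u_t$, and apply the inductive hypothesis \eqref{pieceu} at order $s-1$ directly to $u_t$; this is cleaner than the finite-difference argument needed in the $s=1$ case because for $s\ge 2$ the coefficients are smooth enough to differentiate classically.

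Once the decay estimate and the auxiliary estimates are in place, the proof of part~(a) follows the same triangle-inequality splitting used in \eqref{differenceDu}--\eqref{diffDtildeuU}: for $|z_0-z_1|_p<1/16$ we set $r=|z_0-z_1|_p$, estimate separately the contributions from $\breve\Phi(z_0,r)$, $\breve\Phi(z_1,2r)$, $|D_{\ell^{k'}}\breve u(\cdot;z_0)-D_{\ell^{k'}}\breve u(\cdot;z_1)|$, and $|\breve U(\cdot;z_0)-\breve U(\cdot;z_1)|$, using $C^{(s+\mu')/2,s+\mu'}$-regularity of $u$ coming from the inductive hypothesis to bound the differences in the parameter $z_0$. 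Part~(b) then follows by solving for $D^{s+1}u$ from the equations \eqref{Dlu}, the analog of \eqref{tildeU} for $\breve U$, and the PDE \eqref{Dalpbetau} differentiated $s-1$ times, via Cramer's rule in the rotated frame where the coefficient matrix has determinant $(\hat A^{dd})^d>0$. The main obstacle I anticipate is the bookkeeping in verifying that the mean oscillation of $\breve f^\alpha$ has the correct decay rate $r^{d+2+\mu'}$: the many cross terms in \eqref{def-brevefalpha} and \eqref{deftildeF} each involve products of (possibly singular) derivatives of $\ell$ with derivatives of $u$ evaluated at $(t_0,P_jx_0)$, and the trick is that the subtraction of $u_0$ and of $A^{\alpha\beta}\tilde F_\beta$ is designed precisely to replace each such term by a difference $D^s u(z)-D^s u(t_0,P_jx_0)$ that the inductive hypothesis controls by $r^{\mu'}$, after which the $\int|D\ell|$-volume estimate used in \eqref{estDellk0} delivers the remaining $r^{1/2}$.
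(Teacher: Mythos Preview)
Your proposal is correct and follows essentially the same approach as the paper: the paper defines the functional $\Psi(z_0,r)$ (your $\breve\Phi$), proves its decay via Proposition~\ref{lemma iteravarp} and Lemma~\ref{lemma iterab} through the same coordinate-change/weak-type-$(1,1)$/Campanato iteration scheme you outline, obtains the $u_t$ regularity by direct differentiation in $t$ and the $\|D^{s+1}u\|_{L_\infty}$ bound in Lemma~\ref{lemmaDDDu}, and then runs the triangle-inequality argument and Cramer's rule exactly as you describe. Your anticipation of the key cancellation---that the subtraction of $u_0$ and $A^{\alpha\beta}\tilde F_\beta$ converts the singular $D\ell$ terms into differences $D^s u(z)-D^s u(t_0,P_jx_0)$ controlled by the inductive hypothesis---is precisely the mechanism the paper exploits in Lemma~\ref{lemmaFGhigh}.
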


To prove Proposition \ref{proptbreveu}, we denote
$$
\Psi(z_{0},r):=\inf_{\mathbf q^{k},\mathbf Q\in\mathbb R^{n}}\left(\fint_{Q_r^-(z_0)}\big(|D_{\ell^{k}}\breve u(z;z_0)-\mathbf q^{k}|^{\frac{1}{2}}+|\breve U(z;z_0)-\mathbf Q|^{\frac{1}{2}}\big)\,dz\right)^{2}$$
and prove a decay estimate of it. We set
\begin{equation*}
\begin{split}
&\breve {\mathfrak u}(t,y;\Lambda z_0)=\breve u(t,x;z_0), \quad \mathcal{A}^{\alpha\beta}(t,y)=\Lambda^{\alpha k}A^{ks}(t,x)\Lambda^{s\beta}, \\
& \breve {\mathfrak f}^\alpha(t,y;\Lambda z_0)=\Lambda^{\alpha k}\breve f^k(t,x;z_0),\quad \breve{\mathfrak g}(t,y)=\breve g(t,x),
\end{split}
\end{equation*}
where $y=\Lambda x$ and $\Lambda=(\Lambda^{\alpha\beta})_{\alpha,\beta=1}^{d}$ is a $d\times d$ orthogonal matrix defined in Section \ref{sec 2.3}. Then we have from \eqref{eqbreveu} that $\breve{\mathfrak u}$ satisfies
\begin{equation*}
-\breve{\mathfrak u}_t+D_\alpha(\mathcal{A}^{\alpha\beta}D_\beta \breve{\mathfrak u})=\breve{\mathfrak g}+ D_\alpha \breve {\mathfrak f}^\alpha\quad\mbox{in}~\Lambda(Q_{3/4}^-),
\end{equation*}
where $\Lambda(Q_{3/4}^-):=(-9/16,0)\times\Lambda(B_{3/4})$.
Denote
\begin{align*}
&\varphi(\Lambda z_{0},r)\\
&:=\inf_{\mathbf q^{k},\mathbf Q\in\mathbb R^{n}}\left(\fint_{Q_r^-(\Lambda z_0)}\big(|D_{y^{k}}\breve{\mathfrak u}(z;\Lambda z_0)-\mathbf q^{k}|^{\frac{1}{2}}+|\mathcal{A}^{d\beta}D_{y^\beta}\breve{\mathfrak u}(z;\Lambda z_0)-\tilde {\mathfrak f}^d-\mathbf Q|^{\frac{1}{2}}\big)\,dz\right)^{2}.
\end{align*}
The following result holds.

\begin{proposition}\label{lemma iteravarp}
For any $0<\rho\leq r\leq 1/4$, we have
\begin{align*}
\varphi(\Lambda z_{0},\rho)\leq N\Big(\frac{\rho}{r}\Big)^{\mu'}\varphi(\Lambda z_{0},r/2)+N\rho^{\mu'}\mathcal{C}_1,
\end{align*}
where  $\mu'=\min\big\{\frac{1}{2},\mu\big\}$, $N$ depends on $n,d,m,p,\nu,|A|_{(s+\delta)/2,s+\delta;(-1,0)\times\overline{\cD_{j}}}$, and the  $C^{s+1+\mu}$ norm of $h_j$, and
\begin{align}\label{defC1}
\mathcal{C}_1:=\sum_{j=1}^{m+1}[D^su]_{1/2,1;Q_{r}^-(z_0)\cap((-1+\varepsilon,0)\times\overline\cD_j)}+\sum_{j=1}^{M}|f|_{(s+\delta)/2,s+\delta;(-1,0)\times\overline{\cD_{j}}}+\|Du\|_{L_{1}(\cQ)}.
\end{align}
\end{proposition}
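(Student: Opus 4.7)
The plan is to mirror, at order $s$, the argument used for $s=1$ in Lemma \ref{lemmaFG} and Proposition \ref{lemma iteraphi}, replacing the r\^ole of piecewise $C^{(1+\mu')/2,1+\mu'}$-regularity of $u$ (i.e.\ Lemma \ref{lemma Dtildeu}) by the induction hypothesis \eqref{pieceu}, which gives piecewise $C^{(s+\mu')/2,s+\mu'}$-regularity of $u$, so that $D^{s}u$ is piecewise $C^{\mu'/2,\mu'}$ and $[D^{s}u]_{1/2,1;Q_r^-(z_0)\cap ((-1+\varepsilon,0)\times\overline{\cD_j})}$ is controlled by $\mathcal{C}_1$.

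First I would split $\breve f^\alpha=\breve f_1^\alpha+\breve f_2^\alpha$ in the $x$-coordinates, where $\breve f_1^\alpha$ collects the terms in \eqref{def-brevefalpha} that contain singular factors $D(\ell_{i_1}\cdots\ell_{i_s})$ or derivatives of $R(u)$, in the form
\begin{align*}
\breve f_1^\alpha&=A^{\alpha\beta}D_\beta(\ell_{i_1}\cdots\ell_{i_s})D_{i_1}\cdots D_{i_s}u+A^{\alpha\beta}D_\beta(R(u))-A^{\alpha\beta}\tilde F_\beta,
\end{align*}
and $\breve f_2^\alpha$ collects the remaining terms, which by Lemma \ref{lemell}(i)--(ii), the piecewise $C^{(s+\delta)/2,s+\delta}$ regularity of $A^{\alpha\beta}, f^\alpha$, the piecewise $C^\mu$ regularity of $D_\ell n_j^\alpha$ (and of higher tangential derivatives of $n_j$ entering $\breve h_j$), and the induction hypothesis, lie in piecewise $C^{\mu'/2,\mu'}$. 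After transforming to the $y$-coordinates via $\breve{\mathfrak f}_i^\alpha(t,y;\Lambda z_0)=\Lambda^{\alpha k}\breve f_i^k(t,x;z_0)$, I would take
\begin{align*}
F^\alpha&=(\overline{\mathcal A^{\alpha\beta}}(y^d)-\mathcal A^{\alpha\beta}(t,y))D_{y^\beta}\breve{\mathfrak u}(t,y;\Lambda z_0)+\breve{\mathfrak f}_1^\alpha(t,y;\Lambda z_0)+\breve{\mathfrak f}_2^\alpha(t,y;\Lambda z_0)-\overline{\breve{\mathfrak f}_2^\alpha}(y^d),\\
G&=\breve{\mathfrak g}(t,y),
\end{align*}
with $\overline{\breve{\mathfrak f}_2^\alpha}(y^d)$ the piecewise constant approximation of $\breve{\mathfrak f}_2^\alpha$ (cf.\ \eqref{piecewisecont}).

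The main technical step is the higher-order analogue of Lemma \ref{lemmaFG}: showing
\[
\|F\|_{L_1(Q_r^-(\Lambda z_0))}+\|G\|_{L_1(Q_r^-(\Lambda z_0))}\le Nr^{d+2+\mu'}\mathcal{C}_1.
\]
This is where the hard work lies. For the first term in $F$, I would mimic the identity \eqref{Af1}: using $A^{\alpha\beta}D_\beta\breve u=\mathcal A^{\alpha\beta}(t,y)\Gamma^{\beta s}D_s\breve u$ together with the expansion of $D_s\breve u$ coming from \eqref{defbreveu}, \eqref{defu-0}, the singular factor $D_\beta(\ell_{i_1}\cdots\ell_{i_s})$ is paired with the difference $D_{i_1}\cdots D_{i_s}u(z)-D_{i_1}\cdots D_{i_s}u(t_0,P_jx_0)$ (plus lower-order analogous pairings), which by the induction hypothesis is $O(r^{\mu'})$; combined with the integral bound \eqref{estDellk0} on $|D\ell|$ this produces a factor $r^{d+2+\mu'}\mathcal{C}_1$. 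The same mechanism handles $A^{\alpha\beta}D_\beta R(u)-A^{\alpha\beta}\tilde F_\beta$ by using Lemma \ref{lemell}(iii), which provides $|DD_{\ell^{k_1}}\cdots D_{\ell^{k_{s-1}}}\ell^{k_s}|\le N|h_j-h_{j-1}|^{-1}$, balanced against the regularity of lower-order derivatives of $u$ on each subdomain supplied by \eqref{pieceu}. The $\breve{\mathfrak f}_2^\alpha-\overline{\breve{\mathfrak f}_2^\alpha}$ term is treated by \eqref{estAbarf} applied to the piecewise $C^{\mu'/2,\mu'}$ function $\breve{\mathfrak f}_2^\alpha$. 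Finally, for $G=\breve{\mathfrak g}$, inspection of \eqref{defg2} combined with the same ideas (pair each singular factor $D_\alpha(\ell_{i_1}\cdots\ell_{i_s})$ with an $L^\infty$-bounded piecewise smooth quantity whose oscillation on $Q_r^-(\Lambda z_0)$ is $O(r^{\mu'})$, and apply \eqref{estDellk0}) yields the same bound.

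Once these $L_1$-bounds are in hand, the rest of the argument is identical to the one establishing Proposition \ref{lemma iteraphi}: apply Lemma \ref{weak est barv} to obtain $\big(\fint_{Q^-_{r/2}(\Lambda z_0)}|Dv|^{1/2}\big)^2\le Nr^{\mu'}\mathcal{C}_1$; define $u_1(y^d)=\int_{\Lambda x_0^d}^{y^d}(\overline{\mathcal A^{dd}})^{-1}\overline{\breve{\mathfrak f}_2^d}\,ds$ and $w=\breve{\mathfrak u}-u_1-v$, which solves the homogeneous system with coefficients depending only on $y^d$; invoke Lemma \ref{lemma xn} with the substitution $w\rightsquigarrow w-\sum_{\beta<d}y^\beta\mathbf{q}^\beta-h(y^d)$ to obtain the $\kappa$-scaled decay
\[
\varphi(\Lambda z_0,\kappa r)\le N_0\kappa\,\varphi(\Lambda z_0,r/2)+N\kappa^{-2(d+2)}r^{\mu'}\mathcal{C}_1,
\]
choose $\kappa\in(0,1/2)$ so that $N_0\kappa\le\kappa^\gamma$ for a fixed $\gamma\in(\mu',1)$, and iterate to deduce the claimed estimate for every $0<\rho\le r\le 1/4$. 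The only genuinely new work, then, is the bookkeeping for $\|F\|_{L_1}+\|G\|_{L_1}$; every other step transfers verbatim from the $s=1$ case.
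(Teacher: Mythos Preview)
Your plan matches the paper's approach closely: the same splitting $\breve f^\alpha=\breve f_1^\alpha+\breve f_2^\alpha$, the same choice of $F^\alpha$ and $G$, Lemma \ref{lemmaFGhigh} for the $L_1$ bounds, and then a verbatim rerun of the iteration in Proposition \ref{lemma iteraphi}. One correction on the $G$ estimate: you should not claim $\|G\|_{L_1}\le Nr^{d+2+\mu'}\mathcal C_1$, and the ``pair singular factor with oscillation'' mechanism is not what works here. Inspecting \eqref{defg2}, the factor $D_\alpha(\ell_{i_1}\cdots\ell_{i_s})$ multiplies $D_{i_1}\cdots D_{i_s}(A^{\alpha\beta}D_\beta u-f^\alpha)$, which already contains $D^{s+1}u$ and has no built-in subtraction; the paper (Lemma \ref{lemmaFGhigh}) simply bounds this in $L^\infty$ by $\|D^{s+1}u\|_{L_\infty}\le [D^s u]_{1/2,1}\le\mathcal C_1$ and uses \eqref{estDellk0} to get $\|G\|_{L_1}\le Nr^{d+3/2}\mathcal C_1$. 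This weaker bound is enough because Lemma \ref{weak est barv} carries an extra factor of $r$ in front of $\|G\|_{L_1}$, so $r\|G\|_{L_1}\le Nr^{d+5/2}\mathcal C_1\le Nr^{d+2+\mu'}\mathcal C_1$, and \eqref{holder v} (with $\mathcal C_1$ in place of $\mathcal C_0$) follows as before.
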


The proof of Proposition \ref{lemma iteravarp} is similar to that of Proposition \ref{lemma iteraphi}. We shall provide the main ingredients of the proof. We first rewrite \eqref{def-brevefalpha} as $\breve f^\alpha(t,x;z_0)=\breve f_1^\alpha(t,x;z_0)+\breve f_2^\alpha(t,x)$, where
\begin{align*}
\breve f_1^\alpha(t,x;z_0)
&=A^{\alpha\beta}\Big(D_\beta(\ell_{i_1}\ell_{i_2}\cdots\ell_{i_{_{s}}})D_{i_1}D_{i_2}\cdots D_{i_{_{s}}}u+D_\beta(R(u))-\tilde F_\beta\Big),
\end{align*}
and
\begin{align*}
&\breve f_2^\alpha(t,x)=\ell_{i_1}\ell_{i_2}\cdots\ell_{i_{_{s}}}D_{i_1}D_{i_2}\cdots D_{i_{_{s}}}f^\alpha+\delta_{\alpha d}\sum_{j=1}^m\mathbbm{1}_{x^d>h_j(x')} (n^d_j(x'))^{-1}\breve h_j(t,x')\nonumber\\
&\qquad-\ell_{i_1}\ell_{i_2}\cdots\ell_{i_{_{s}}}\big(D_{i_1}A^{\alpha\beta}D_\beta D_{i_2}\cdots D_{i_{_{s}}}u+\sum_{\tau=1}^{s-1}D_{i_1}\cdots D_{i_{\tau}}(D_{i_{\tau+1}}A^{\alpha\beta}D_\beta D_{i_{\tau+2}}\cdots D_{i_{_{s}}}u)\big).
\end{align*}
Here $R(u)$ is defined in \eqref{defLu}. Set
\begin{equation*}
\breve {\mathfrak f}_1^\alpha(t,y;\Lambda z_0)=\Lambda^{\alpha k}\breve f_1^k(t,x;z_0),\quad \breve {\mathfrak f}_2^\alpha(t,y)=\Lambda^{\alpha k}\breve f_2^k(t,x).
\end{equation*}
Then we  choose $F^\alpha$ and $G$ as follows:
\begin{equation}\label{def-Falphahigh}
\begin{split}
F^\alpha:=F^\alpha(t,y;\Lambda z_0)&=(\overline{{\mathcal A}^{\alpha\beta}}(y^{d})-\mathcal{A}^{\alpha\beta}(t,y))D_{y^\beta}\breve{\mathfrak u}(t,y;\Lambda z_0)+\breve {\mathfrak f}_1^\alpha(t,y;\Lambda z_0)\\
&\quad+\breve {\mathfrak f}_2^\alpha(t,y)-\bar{\mathfrak f}_2^\alpha(y^d),\quad F=(F^1,\cdots,F^d),
\end{split}
\end{equation}
and
\begin{equation}\label{def-Ghigh}
G=\breve{\mathfrak g}(t,y)=\breve g(t,x),
\end{equation}
where $\bar{\mathfrak f}_2^\alpha(y^d)$ is the piecewise constant function corresponding to $\breve {\mathfrak f}_2^\alpha(t,y)$.

\begin{lemma}\label{lemmaFGhigh}
Let $F$ and $G$ be defined  in \eqref{def-Falphahigh} and \eqref{def-Ghigh}, respectively. Then we have
\begin{align}\label{estFhigh}
\|F\|_{L_1(Q_{r}^-(\Lambda z_0))}\leq Nr^{d+2+\mu'}\mathcal{C}_1
\end{align}
and
\begin{align}\label{estGL1high}
&\|G\|_{L_{1}(Q_{r}^-(\Lambda z_0))}\notag\\
&\leq N r^{d+\frac{3}{2}}\Big(\sum_{j=1}^{m+1}\|D^{s+1}u\|_{L_\infty(Q_{r}^-(z_0)
\cap((-1+\varepsilon,0)\times\cD_j))}\notag\\
&\qquad\qquad+\sum_{j=1}^{M}|f|_{(s+\delta)/2,s+\delta;(-1,0)\times\overline{\cD_{j}}}
+\|Du\|_{L_{1}(\cQ)}\Big),
\end{align}
where $\mathcal{C}_1$ is defined in \eqref{defC1}, $\mu'=\min\big\{\frac{1}{2},\mu\big\}$, $N$ depends on  $|A|_{(s+\delta)/2,s+\delta;(-1,0)\times\overline{\cD_{j}}}$,  $n,d,m,p,\nu$, and the $C^{s+1+\mu}$ norm of $h_j$.
\end{lemma}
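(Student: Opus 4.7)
The plan is to mirror the proof of Lemma \ref{lemmaFG} for the case $s=1$, carefully tracking the higher-order singular contributions that appear when $D_\ell$ is iterated $s$ times. The key algebraic identities are \eqref{Dlu}--\eqref{defLu} together with the definition \eqref{defu-0} of the pinning function $u_0$, which is designed so that every $|D\ell|$-type singularity in $D\breve u$ is paired with a matching singularity evaluated at $(t_0,P_j x_0)$, and every $\mathbbm 1_{\cD_j^c}$-contribution is absorbed into the auxiliary field ${\mathsf u}$ via \eqref{eqmathsfu}--\eqref{defsfu}.

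For \eqref{estFhigh}, decompose $F=I_1+I_2+I_3$, where $I_1+I_2$ comes from regrouping the first two summands of \eqref{def-Falphahigh} and $I_3:=\breve{\mathfrak f}_2^\alpha-\bar{\mathfrak f}_2^\alpha$. Using the coordinate change $A^{ks}=\Gamma^{k\alpha}\mathcal A^{\alpha\beta}\Gamma^{\beta s}$ together with \eqref{Dlu} and \eqref{defbreveu}, one writes
$$
(\overline{\mathcal A^{\alpha\beta}}(y^d)-\mathcal A^{\alpha\beta})D_{y^\beta}\breve{\mathfrak u}+\breve{\mathfrak f}_1^\alpha=I_1+I_2,
$$
where $I_1$ is $(\overline{\mathcal A^{\alpha\beta}}-\mathcal A^{\alpha\beta})\Gamma^{\beta s}$ times interior quantities that are bounded in $L_\infty$ on each $\cD_j$ --- namely the non-singular pieces $\ell_{i_1}\cdots\ell_{i_s}D_sD_{i_1}\cdots D_{i_s}u$, $D_s{\mathsf u}$, and the $\mathbbm 1_{\cD_j^c}$-summands of $D_s u_0$ --- while $I_2$ is $\overline{\mathcal A^{\alpha\beta}}\Gamma^{\beta s}$ times the difference of the singular $D_s(\ell_{i_1}\cdots\ell_{i_s})D_{i_1}\cdots D_{i_s}u+D_s R(u)$ evaluated at the running point $z$ and at the pinned points $(t_0,P_j x_0)$ (restricted via $\mathbbm 1_{\cD_j}$). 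The estimate of $I_1$ uses the piecewise Hölder regularity of $\mathcal A$, which by the analog of \eqref{estAbarf} gives $\|\overline{\mathcal A}-\mathcal A\|_{L_1(Q_r^-(\Lambda z_0))}\leq Nr^{d+5/2}$, multiplied by the $L_\infty$ bound on the interior factors supplied by the induction hypothesis \eqref{pieceu}, Lemma \ref{lemell}(ii), and \eqref{est-Dmathu}. The estimate of $I_2$ uses the piecewise Lipschitz increment $|D^s u(z)-D^s u(t_0,P_j x_0)|\leq Nr\,[D^s u]_{1/2,1}$ together with the integral control $\int_{Q_r^-(z_0)\cap\cD_j}|D\ell|\,dz\leq Nr^{d+3/2}$ from Lemma \ref{lemell}(iii), yielding $\|I_2\|_{L_1}\leq Nr^{d+5/2}\mathcal C_1$. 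Finally, $I_3$ is handled by applying \eqref{estAbarf} to $\breve{\mathfrak f}_2^\alpha$: its piecewise $C^{\mu'/2,\mu'}$ regularity follows from $f,A\in C^{(s+\delta)/2,s+\delta}$ piecewise, the piecewise Lipschitz bound on $D^s u$ from the induction, Lemma \ref{lemell}(i), and the structure of $\breve h_j$ in \eqref{brevehj}. Summing the three contributions gives \eqref{estFhigh}.

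For \eqref{estGL1high}, use the explicit form of $\breve g$ in \eqref{defg2}. The first summand $D_\alpha(\ell_{i_1}\cdots\ell_{i_s})\cdot D_{i_1}\cdots D_{i_s}(A^{\alpha\beta}D_\beta u-f^\alpha)$ has a non-integrable factor $|D\ell|$ but the other factor is piecewise $L_\infty$-bounded by $\|D^{s+1}u\|_{L_\infty(\cD_j)}+|f|_{(s+\delta)/2,s+\delta}+\ldots$ (with $\|D^{s+1}u\|_{L_\infty(\cD_j)}$ controlled by $[D^s u]_{1/2,1}$); multiplying by the bound $\int_{Q_r^-(z_0)\cap\cD_j}|D\ell|\,dz\leq Nr^{d+3/2}$ yields the claimed $r^{d+3/2}$ rate. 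The second summand $R(D_\alpha(f^\alpha-A^{\alpha\beta}D_\beta u))$ is structurally the same after one distributes $D_\alpha$: exactly one factor of $|D\ell|$ appears via the chain rule, while all nested directional derivatives $D_{\ell^{k_1}}\cdots D_{\ell^{k_{\tau-1}}}\ell^{k_\tau}$ are uniformly $L_\infty$-bounded by Lemma \ref{lemell}(ii), giving the same $r^{d+3/2}$ bound.

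The main obstacle is bookkeeping: one must verify at the symbolic level that the singular contributions in $\breve{\mathfrak f}_1^\alpha$ are matched \emph{exactly} by the corresponding summands of $\mathcal A^{\alpha\beta}D_{y^\beta}\breve{\mathfrak u}$ coming from $D_s(\ell_{i_1}\cdots\ell_{i_s})D_{i_1}\cdots D_{i_s}u+D_s R(u)$, so that after regrouping only bounded interior terms (absorbed into $I_1$) and pinned differences (absorbed into $I_2$) remain. This cancellation is what the definitions of $u_0$ in \eqref{defu-0} and of ${\mathsf u}$ in \eqref{defsfu} are engineered to produce: the former cancels the singular products $D^k(\ell_{i_1}\cdots\ell_{i_s})\cdot D^{s-k+1}u$ inside each $\cD_j$, and the latter removes the indicator-$\mathbbm 1_{\cD_j^c}$ remainders by converting them into a piecewise $C^{(1+\mu')/2,1+\mu'}$ correction. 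Once this matching is established, the remaining calculations are direct adaptations of \eqref{est-I11}--\eqref{estDf} from the $s=1$ proof.
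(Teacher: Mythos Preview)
Your proposal is correct and follows essentially the same route as the paper's proof: both rely on the algebraic identity (the higher-order analog of \eqref{Af1}) splitting $(\overline{\mathcal A^{\alpha\beta}}-\mathcal A^{\alpha\beta})D_{y^\beta}\breve{\mathfrak u}+\breve{\mathfrak f}_1^\alpha$ into a piece multiplied by $\overline{\mathcal A}-\mathcal A$ against the bounded interior quantities $\ell_{i_1}\cdots\ell_{i_s}D_kD_{i_1}\cdots D_{i_s}u-D_k{\mathsf u}-F_k$ and a piece $\overline{\mathcal A}\Gamma^{\beta k}\big(D_k(\ell_{i_1}\cdots\ell_{i_s})D_{i_1}\cdots D_{i_s}u+D_kR(u)-\tilde F_k\big)$ where the singular factors are paired with pinned increments; the $\breve{\mathfrak f}_2^\alpha-\bar{\mathfrak f}_2^\alpha$ term and the $G$ estimate are handled identically in both. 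Your write-up is in fact more explicit than the paper's about the cancellation mechanism engineered by $u_0$ and ${\mathsf u}$.
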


\begin{proof}
We first note that $\breve {\mathfrak f}_2^\alpha$ is piecewise $C^{\mu'/2,\mu'}$, and similar to \eqref{estDf}, we have
\begin{align*}
\int_{Q_{r}^-(\Lambda z_0)}\big|\breve {\mathfrak f}_2^\alpha(t,y)-\bar{\mathfrak f}_2^\alpha(y^d)\big|
\leq Nr^{d+2+\mu'}\bigg( \sum_{j=1}^{M}|f|_{(s+\delta)/2,s+\delta;(-1,0)\times\overline{\cD_{j}}}+\|Du\|_{L_{1}(\cQ)}\bigg),
\end{align*}
where $\mu'=\min\big\{\frac{1}{2},\mu\big\}$.  By using the same argument  in deriving  \eqref{Af1}, we obtain
\begin{align*}
&(\overline{{\mathcal A}^{\alpha\beta}}(y^{d})-\mathcal{A}^{\alpha\beta}(t,y))D_{y^\beta}\breve{\mathfrak u}(t,y;\Lambda z_0)+\breve {\mathfrak f}_1^\alpha(t,y;\Lambda z_0)\nonumber\\
&=(\overline{{\mathcal A}^{\alpha\beta}}(y^{d})-\mathcal{A}^{\alpha\beta}(t,y))\Gamma^{\beta k}(\ell_{i_1}\ell_{i_2}\cdots\ell_{i_{_{s}}}D_kD_{i_1}D_{i_2}\cdots D_{i_{_{s}}}u-D_{k}{\mathsf u}-F_k)\nonumber\\
&\quad+\overline{{\mathcal A}^{\alpha\beta}}(y^{d})\Gamma^{\beta k}\big(D_k(\ell_{i_1}\ell_{i_2}\cdots\ell_{i_{_{s}}}) D_{i_1}D_{i_2}\cdots D_{i_{_{s}}}u+D_k(R(u))-\tilde F_k\big),
\end{align*}
where $F_k$ and $\tilde F_k$ are defined in \eqref{defmathF} and \eqref{deftildeF}, respectively.
Then similar to \eqref{estADtildeu}, using \eqref{est-Dmathu} and  Lemma \ref{lemell}(iii), we have
\begin{align*}
\left\|(\overline{{\mathcal A}^{\alpha\beta}}(y^{d})-\mathcal{A}^{\alpha\beta}(\cdot))D_{y^\beta}\breve{\mathfrak u}(\cdot;\Lambda z_0)+\breve {\mathfrak f}_1^\alpha(\cdot;\Lambda z_0)\right\|_{L_1(Q_{r}^-(\Lambda z_0))}\leq Nr^{d+5/2}\mathcal{C}_1,
\end{align*}
where $\mathcal{C}_1$ is defined in \eqref{defC1}. Thus \eqref{estFhigh} is proved. By \eqref{estDellk0}, we have \eqref{estGL1high}. The proof of Lemma \ref{lemmaFGhigh} is finished.
\end{proof}

With the above preparations, applying Lemmas \ref{weak est barv} and \ref{lemmaFGhigh}, and following the process in the proof of  Proposition \ref{lemma iteraphi}, we obtain Proposition \ref{lemma iteravarp}. With Proposition \ref{lemma iteravarp} at hand,  replicating  the proof Lemma \ref{lemma itera}, we have the following decay estimate of $\Psi(z_{0},r)$.

\begin{lemma}\label{lemma iterab}
For any $0<\rho\leq r\leq 1/4$, we have
\begin{align*}
\Psi(z_{0},\rho)\leq N\Big(\frac{\rho}{r}\Big)^{\mu'}\Psi(z_{0},r/2)+N\rho^{\mu'}\mathcal{C}_1,
\end{align*}
where $\mathcal{C}_1$ is defined in \eqref{defC1}, $\mu'=\min\big\{\frac{1}{2},\mu\big\}$, $N$ is a constant depending on $n$, $d$, $m$, $p$, $\nu$,  $\varepsilon$, $|A|_{(s+\delta)/2,s+\delta;(-1,0)\times\overline{\cD_{j}}}$, and the  $C^{s+1+\mu}$ norm of $h_j$.

\end{lemma}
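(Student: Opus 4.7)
The plan is to replicate, line for line, the argument used for Lemma \ref{lemma itera}, with $(\breve u, \breve U, \breve{\mathfrak u}, \breve{\mathfrak f}, \Psi, \varphi, \mathcal C_1)$ playing the role of $(\tilde u, \tilde U, \tilde{\mathfrak u}, \tilde{\mathfrak f}, \Phi, \phi, \mathcal C_0)$. Since $\ell^k$ and $n$ are computed from the same vector fields as before, and since $y = \Lambda x$ with $\Lambda^k = \tau_k = \ell^k(y_0)$ and $\Lambda^d = n_{y_0}$, the algebraic identities
\begin{align*}
D_{\ell^k}\breve u(t,x;z_0) &- D_{y^k}\breve{\mathfrak u}(t,y;\Lambda z_0) = (\ell^k(x)-\tau_k)\cdot D\breve u(t,x;z_0),\\
\breve U(t,x;z_0) &- \mathcal A^{d\beta}(t,y)D_{y^\beta}\breve{\mathfrak u}(t,y;\Lambda z_0) + \breve{\mathfrak f}^d(t,y;\Lambda z_0)\\
&= (n^\alpha - n_{y_0}^\alpha)\big(A^{\alpha\beta}(t,x)D_\beta\breve u(t,x;z_0) - \breve f^\alpha(t,x;z_0)\big)
\end{align*}
hold verbatim. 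The three-case analysis carried out in the proof of Lemma \ref{lemma itera} (Cases 1--3 producing \eqref{diffell}, \eqref{diffell2}, \eqref{diffell3}) applies unchanged and yields $|\ell^k(x)-\tau_k|\le N\sqrt r$ and $|n(x)-n_{y_0}|\le N\sqrt r$ for $x\in B_r(x_0)\cap\cD_j$, so that the analogue of \eqref{difference-coor} reads
\begin{equation*}
|D_{\ell^k}\breve u - D_{y^k}\breve{\mathfrak u}|\le N\sqrt r\,|D\breve u|,\qquad |\breve U - \mathcal A^{d\beta}D_{y^\beta}\breve{\mathfrak u} + \breve{\mathfrak f}^d|\le N\sqrt r\,(|D\breve u|+|\breve f|).
\end{equation*}

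To convert these pointwise bounds into a comparison of $\Psi$ and $\varphi$, the next step is to establish the higher-order analogue of Lemma \ref{lemmeanDu}, namely
\begin{equation*}
\fint_{Q_r^-(z_0)}|D\breve u|\,dz + \fint_{Q_r^-(z_0)}|\breve f^\alpha|\,dz \le N\mathcal C_1.
\end{equation*}
This is the step I expect to be the main technical burden, because $\breve u = u^\ell - {\mathsf u}$ and $\breve f^\alpha$ now contain many terms involving derivatives of products $\ell_{i_1}\cdots \ell_{i_s}$ and the singular remainder $R(u)$ from \eqref{defLu}. The bounds should follow by combining (i) Lemma \ref{lemell}(iii), which gives $|D\ell^k|\le N|h_j-h_{j-1}|^{-1/2}$ and $|DD_{\ell^{k_1}}\cdots D_{\ell^{k_{s-1}}}\ell^{k_s}|\le N|h_j-h_{j-1}|^{-1}$, (ii) the elementary geometric bound $\int_{B_r'(x_0')}\min\{2r,h_j-h_{j-1}\}^a |h_j-h_{j-1}|^{-a}\,dx'\le Nr^{d-1}$ for $a\in\{1/2,1\}$ used to derive \eqref{estDellk0}, (iii) the inductive hypothesis \eqref{pieceu} applied at order $s$ to bound $D^s u(t_0, P_jx_0)$ and $D^{s+1}u$ on each subdomain, and (iv) the $C^{(1+\mu')/2,1+\mu'}$-estimate \eqref{est-Dmathu} for ${\mathsf u}$. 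The telescoping differences of the form $D^su(t,x)-D^su(t_0,P_jx_0)$ paired with $D(\ell_{i_1}\cdots\ell_{i_s})$ in $\breve f_1^\alpha$ will contribute a factor $r$ that exactly compensates the $r^{-1/2}$ singularity of $|D\ell|$, as in \eqref{estDellk}.

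Once this mean-value bound is in hand, the triangle inequality together with the pointwise estimates above gives
\begin{equation*}
\Psi(z_0,\rho) \le \varphi(\Lambda z_0,\rho) + N\sqrt{\rho}\,\mathcal C_1, \qquad \varphi(\Lambda z_0,r/2)\le \Psi(z_0,r/2) + N\sqrt{r}\,\mathcal C_1,
\end{equation*}
by the same averaging-and-squaring step as in the proof of Lemma \ref{lemma itera}. Combining the first inequality with Proposition \ref{lemma iteravarp} and then substituting the second yields
\begin{equation*}
\Psi(z_0,\rho) \le N\Big(\frac{\rho}{r}\Big)^{\mu'}\Psi(z_0,r/2) + N\Big(\frac{\rho}{r}\Big)^{\mu'}\sqrt{r}\,\mathcal C_1 + N\rho^{\mu'}\mathcal C_1.
\end{equation*}
The middle term equals $N\rho^{\mu'}r^{1/2-\mu'}\mathcal C_1$, and since $\mu'\le 1/2$ and $r\le 1/4$ it is dominated by $N\rho^{\mu'}\mathcal C_1$, giving the desired inequality.
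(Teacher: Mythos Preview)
Your proposal is correct and follows exactly the approach the paper takes: the paper's proof of Lemma \ref{lemma iterab} is literally the single sentence ``With Proposition \ref{lemma iteravarp} at hand, replicating the proof Lemma \ref{lemma itera}, we have the following decay estimate,'' and your write-up is a faithful unpacking of that replication. Your identification of the higher-order analogue of Lemma \ref{lemmeanDu} as the main technical ingredient, and your sketch of how to obtain it from Lemma \ref{lemell}(iii), the geometric bound \eqref{estDellk0}, the inductive hypothesis \eqref{pieceu}, and the estimate \eqref{est-Dmathu} for ${\mathsf u}$, are all on target.
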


We shall further establish the estimates of $[D^su]_{t,(1+\delta)/2}$, $|u_t|_{(s-1+\mu')/2,s-1+\mu'}$, and $\|D^{s+1}u\|_{L_\infty}$.

\begin{lemma}\label{lemmaDDDu}
Under the same assumptions as in Proposition \ref{proptbreveu}, we have
\begin{align*}
&\sum_{j=1}^{m+1}[D^su]_{t,(1+\delta)/2;(-1+3\varepsilon,0)\times (B_{1-2\varepsilon}\cap\overline{\cD_{j}})}+\sum_{j=1}^{m+1}|u_t|_{(s-1+\mu')/2,s-1+\mu';(-1+\varepsilon,0)\times (B_{1-\varepsilon}\cap\overline{\cD_{j}})}\nonumber\\
&\quad+\sum_{j=1}^{m+1}\|D^{s+1}u\|_{L_\infty(Q_{1/4}^-\cap((-1+\varepsilon,0)\times\cD_j))}\\
&\leq N\|Du\|_{L_{1}(Q_{3/4}^{-})}+N\sum_{j=1}^{M}|f|_{(s+\delta)/2,s+\delta;(-1,0)\times\overline{\cD_{j}}},
\end{align*}
where $N>0$ is a constant depending only on $n,d,p,m,\nu,\varepsilon$, $|A|_{(s+\delta)/2,s+\delta;(-1,0)\times\overline{\cD_{j}}}$, and the $C^{s+1+\mu}$ norm of $h_j$.
\end{lemma}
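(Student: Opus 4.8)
The plan is to reproduce, one order higher, the scheme of Lemmas \ref{lemut} and \ref{lemma Dtildeu}, establishing the three assertions in the order: (i) the piecewise regularity of $u_t$; (ii) the sharp time-H\"older seminorm $[D^su]_{t,(1+\delta)/2}$; (iii) the $L_\infty$ bound on $D^{s+1}u$. Throughout I would use the inductive hypothesis -- Theorem \ref{Mainthm} with $s$ replaced by $s-1$, i.e.\ the estimate \eqref{pieceu} -- so that $u$ is already piecewise $C^{(s+\mu')/2,s+\mu'}$ (in particular $u_t$ exists as a function since $s\ge2$); and, as in Proposition \ref{proptbreveu}, I would first prove the a priori estimate assuming $D^su$ is piecewise $C^1$ in $x$ and $C^{1/2}$ in $t$, the general case following by the approximation-and-flattening argument of \cite[p.~2466]{dx2019}. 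For (i): differentiating \eqref{systems} in $t$ shows that in each subdomain $u_t$ is a weak solution of a transmission system with the same coefficients $A^{\alpha\beta}$ and right-hand side $D_\alpha(\partial_tf^\alpha-\partial_tA^{\alpha\beta}D_\beta u)$; by the inductive bound on $D_\beta u$ this data is piecewise $C^{(s-2+\delta)/2,s-2+\delta}$, and the inductive hypothesis at level $s-1$ (whose interface requirement $C^{s+\mu}$ holds since $C^{s+1+\mu}\subset C^{s+\mu}$) then yields $u_t\in C^{(s-1+\mu')/2,s-1+\mu'}$ with the asserted bound, which is the term $|u_t|_{(s-1+\mu')/2,s-1+\mu'}$ in the statement.

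For (ii) I would follow the end of the proof of Lemma \ref{lemut}. Applying the finite-difference operator $\delta_h^\gamma$ of \eqref{def-deltaf} in $t$, with $\gamma<(1+\delta)/2$ taken close to $(1+\delta)/2$, the quotient $\delta_h^\gamma u$ solves the system \eqref{systemut}, whose data is piecewise $C^{(s+\delta-2\gamma)/2,s+\delta-2\gamma}$ uniformly in $h$ -- here the commutator term $\delta_h^\gamma A^{\alpha\beta}D_\beta u(t-h,\cdot)$ is controlled using (i) and the inductive bound. Applying the piecewise Schauder estimate one order below (which, being applied to $\delta_h^\gamma u$ rather than to a directional derivative, needs only H\"older data and gains $\min\{1+\delta-2\gamma,1/2\}$ derivatives) gives $\delta_h^\gamma u\in C^{(s+\delta_1)/2,s+\delta_1}$ uniformly in $h$ with $\delta_1=\min\{1+\delta-2\gamma,\mu'\}$, hence $\delta_h^\gamma D^su\in C^{\delta_1/2,\delta_1}$ uniformly. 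A Marchaud-type interpolation between the uniform $L_\infty$ bound and the uniform $C^{\delta_1/2}$-in-$t$ bound of $\delta_h^\gamma D^su$, together with the identity $\gamma+\delta_1/2=(1+\delta)/2$ (valid once $\gamma$ is chosen so that $1+\delta-2\gamma\le\mu'$, as in Lemma \ref{lemut}), then gives $[D^su]_{t,(1+\delta)/2}$ with the desired right-hand side, and a fortiori $[D^su]_{t,1/2}$, which is the quantity entering $\mathcal{C}_1$ in \eqref{defC1}.

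For (iii), the general-$s$ version of Lemma \ref{lemma Dtildeu}: first, as in Step 1 there, combine the decay estimate of Lemma \ref{lemma iterab} with the telescoping argument on the minimizers $(\mathbf q^k,\mathbf Q)$ of $\Psi(z_0,\cdot)$ to get, for $z_0\in(-1+\varepsilon,0)\times(\cD_\varepsilon\cap\cD_j)$ and $r\in(0,1/4)$, a pointwise bound $|D_{\ell^k}\breve u(z_0;z_0)|+|\breve U(z_0;z_0)|\le Nr^{\mu'}\sum_j\|D^{s+1}u\|_{L_\infty(Q_r^-(z_0)\cap((-1+\varepsilon,0)\times\cD_j))}+Nr^{-1}(\|Du\|_{L_1(\cQ)}+\sum_j|f|_{(s+\delta)/2,s+\delta})$. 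Next, expanding $D_{\ell^k}\breve u$ and $\breve U$ via \eqref{Dlu}, \eqref{defLu}, \eqref{defu-0} and \eqref{defbreveu}, their top-order parts are $\ell^k_j\ell_{i_1}\cdots\ell_{i_s}D_jD_{i_1}\cdots D_{i_s}u$ and $n^\alpha A^{\alpha\beta}\ell_{i_1}\cdots\ell_{i_s}D_\beta D_{i_1}\cdots D_{i_s}u$, all remaining contributions being controlled by (i), (ii), \eqref{est-Dmathu} and Lemma \ref{lemlocbdd}; supplementing these identities with the $n\binom{d+s-2}{s-1}$ further relations obtained by applying $(s-1)$-fold directional derivatives to \eqref{systems} rewritten in the form \eqref{Dalpbetau} (which introduces $D^{s-1}u_t$, estimated in (i)), one gets a linear system for the components of $D^{s+1}u$. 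Passing to the rotated frame with $\Lambda^k=\ell^k(x)$, $\Lambda^d=n(x)$ exactly as in Step 2 of Lemma \ref{lemma Dtildeu}, the coefficient matrix of $D^{s+1}\hat u$ has determinant a positive power of $\hat A^{dd}$, hence nonzero, so Cramer's rule yields $|D^{s+1}u(z_0)|\le Nr^{\mu'}\sum_j\|D^{s+1}u\|_{L_\infty(Q_r^-(z_0)\cap\cdots)}+Nr^{-1}(\text{data})$; taking the supremum over $z_0\in Q_r^-(z_1)\cap((-1+\varepsilon,0)\times\cD_j)$ and iterating as in \cite[Lemma 3.4]{dx2019} to absorb the $Nr^{\mu'}$-term completes the proof.

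The main obstacle is bookkeeping rather than a new idea. The most delicate point is (iii): one must verify that the $s$-fold expansion \eqref{defLu} produces only lower-order terms whose $L^1$-averages on $Q_r^-$ carry the favourable powers of $r$ -- this is where the precise form of $u_0$ in \eqref{defu-0} and the singular-gradient bounds of Lemma \ref{lemell}(iii) are essential -- and that the assembled linear system for $D^{s+1}u$ is nondegenerate in the $\ell^k$-$n$ frame. A secondary point is that the finite-difference Schauder estimate in (ii) is invoked at a H\"older exponent $\delta_1\le\mu'\le1/2$ lying below the range $\delta\in(1/2,1)$ of the main theorem; this causes no difficulty because that restriction is an artifact of the directional-derivative construction (needed to make the $\tilde f_2^\alpha$- and $\breve h_j$-type terms piecewise $C^{\mu'}$), not of the plain Schauder estimate applied to $\delta_h^\gamma u$.
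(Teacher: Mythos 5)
Your proposal reproduces the paper's own proof of Lemma \ref{lemmaDDDu}: finite-difference quotients $\delta_h^\gamma u$ plus the inductive piecewise Schauder estimate for $[D^su]_{t,(1+\delta)/2}$, time-differentiation of \eqref{systems} together with the inductive bound \eqref{pieceu} for $u_t\in C^{(s-1+\mu')/2,s-1+\mu'}$, and the $\Psi$-decay estimate of Lemma \ref{lemma iterab} combined with the expansions \eqref{Dbreveu}--\eqref{eqDDDu}, Cramer's rule in the rotated $(\ell^k,n)$-frame, and the standard iteration for $\|D^{s+1}u\|_{L_\infty}$. The only cosmetic difference is that you establish the regularity of $u_t$ before the $[D^su]_{t,(1+\delta)/2}$ seminorm while the paper does the reverse, and your remark that the finite-difference Schauder estimate is being invoked at an effective H\"older exponent $1+\delta-2\gamma$ below the nominal range $\delta\in(1/2,1)$ correctly identifies -- and correctly dismisses -- a subtlety the paper passes over silently.
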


\begin{proof}
We start with the proof of the estimate of $[D^su]_{t,1/2}$. Since
$$
A^{\alpha\beta},f^\alpha\in C^{(s+\delta)/2,s+\delta}((-1+\varepsilon,0)\times(B_{1-\varepsilon}\cap\overline{\cD}_j)),
$$
one can verify that
$$\delta_h^{\gamma}A^{\alpha\beta}, \delta_h^{\gamma}f^\alpha\in C^{\frac{s+\delta}{2}-\gamma,s+\delta-2\gamma}((-1+2\varepsilon,0)\times(B_{1-\varepsilon}\cap\overline{{\cD}_{j}})),$$
where $\gamma\in \big(0,\frac{1+\delta}{2}\big)$, $h\in(0,\varepsilon)$, and $\delta_h^\gamma$ is defined in \eqref{def-deltaf}. Applying the inductive assumption to \eqref{systemut}, we obtain $\delta_h^{\gamma}u\in C^{(s+\gamma')/2,s+\gamma'}((-1+3\varepsilon,0)\times (B_{1-2\varepsilon}\cap\overline{{\cD}_{j}}))$ with $\gamma':=\min\{1+\delta-2\gamma,\frac{1}{2},\mu\}>0$ and
\begin{align*}
|\delta_h^{\gamma}u|_{(s+\gamma')/2,s+\gamma';(-1+3\varepsilon,0)\times (B_{1-2\varepsilon}\cap\overline{{\cD}_{j}})}\leq N\big(\sum_{j=1}^{M}|f|_{(s+\delta)/2,s+\delta;(-1,0)\times\overline{\cD_{j}}}+\|Du\|_{L_{1}(\cQ)}\big).
\end{align*}
Taking $\gamma$ to be sufficiently close to $\frac{1+\delta}{2}$, we have $\gamma'=1+\delta-2\gamma$, $\gamma+\frac{\gamma'}{2}=\frac{1+\delta}{2}$, and
\begin{align*}
[D^su]_{t,(1+\delta)/2;(-1+3\varepsilon,0)\times (B_{1-2\varepsilon}\cap\overline{{\cD}_{j}})}\leq N\big(\sum_{j=1}^{M}|f|_{(s+\delta)/2,s+\delta;(-1,0)\times\overline{\cD_{j}}}+\|Du\|_{L_{1}(\cQ)}\big).
\end{align*}

Next we prove the higher regularity of $u_t$. By differentiating the equation \eqref{systems} with respect to $t$, we get
\begin{equation*}
-(u_t)_t+D_\alpha(A^{\alpha\beta}D_\beta u_t)=D_\alpha \partial_tf^\alpha-D_\alpha(\partial_tA^{\alpha\beta}D_\beta u).
\end{equation*}
By the inductive assumption, we have
\begin{equation}\label{Dl-u}
u\in C^{(s+\mu')/2,s+\mu'}((-1+\varepsilon,0)\times (B_{1-\varepsilon}\cap\overline{{\cD}_{j}})),
\end{equation}
where $\mu'=\min\big\{\frac{1}{2},\mu\big\}$, and thus  $D_\beta u\in C^{(s-1+\mu')/2,s-1+\mu'}((-1+\varepsilon,0)\times (B_{1-\varepsilon}\cap\overline{{\cD}_{j}}))$. Then combining
$$
\partial_tf^\alpha,~\partial_tA^{\alpha\beta}\in C^{(s-2+\delta)/2,s-2+\delta}((-1+\varepsilon,0)\times (B_{1-\varepsilon}\cap\overline{{\cD}_{j}}))
$$
and the inductive assumption \eqref{pieceu}, we obtain
\begin{equation}\label{estDDtu}
u_t\in C^{(s-1+\mu')/2,s-1+\mu'}((-1+\varepsilon,0)\times (B_{1-\varepsilon}\cap\overline{{\cD}_{j}}))
\end{equation}
and
\begin{align*}
&|u_t|_{(s-1+\mu')/2,s-1+\mu';(-1+\varepsilon,0)\times (B_{1-\varepsilon}\cap\overline{{\cD}_{j}})}\notag\\
&\leq N\Big(\|Du\|_{L_{1}((-1,0)\times \cD)}+\sum_{j=1}^{M}|f|_{(s-1+\delta)/2,s+\delta;(-1,0)\times \overline{\cD_{j}}}\Big).
\end{align*}

Finally, we estimate  $\sum_{j=1}^{m+1}\|D^{s+1}u\|_{L_\infty(Q_{1/4}^-\cap((-1+\varepsilon,0)\times\cD_j)))}$.
Similar to \eqref{estDtildeuU}, we have
\begin{align*}
&|D_{\ell^{k}}\breve u(z_0;z_0)|+|\breve U(z_0;z_0)|\nonumber\\
&\leq Nr^{\mu'}\sum_{j=1}^{m+1}\|D^{s+1}u\|_{L_\infty(Q_{2r}^-(z_1)\cap((-1+\varepsilon,0)\times\cD_j))}\\
&\quad +Nr^{-1}\big(\|Du\|_{L_{1}(\cQ)}+\sum_{j=1}^{M}|f|_{(1+\delta)/2,1+\delta;(-1,0)\times\overline{\cD_{j}}}\big)
\end{align*}
for any  $k=1,\ldots,d-1$, $z_0\in (-1+\varepsilon,0)\times (\cD_{\varepsilon}\cap{{\cD}_{j}})$, and $r\in(0,1/4)$. By using the definition of $\breve u$ in  \eqref{defbreveu}, we have
\begin{align}\label{Dbreveu}
D_{\ell^{k}}\breve u(z;z_0)
&=D_{\ell^{k}}D_{\ell}\cdots D_{\ell}u-D_{\ell^{k}} u_0-D_{\ell^{k}}{\mathsf u}
\end{align}
and
\begin{align*}
A^{\alpha\beta}D_\beta \breve u
&=A^{\alpha\beta}D_\beta(D_{\ell}\cdots D_{\ell}u-u_0-{\mathsf u}),
\end{align*}
where $u_0$ and ${\mathsf u}$ are defined in \eqref{defu-0}  and \eqref{defsfu}, respectively.
Then combining \eqref{Dlu}  and \eqref{def-brevefalpha}, we obtain
\begin{align}\label{breveU}
&\breve U(z;z_0)=n^\alpha\big(A^{\alpha\beta}D_\beta \breve u-\breve f^\alpha\big)\nonumber\\
&=n^\alpha\big(A^{\alpha\beta}\ell_{i_1}\ell_{i_2}\cdots\ell_{i_{_{s}}}D_\beta D_{i_1}D_{i_2}\cdots D_{i_{_{s}}}u-A^{\alpha\beta}D_\beta{\mathsf u}-\ell_{i_1}\ell_{i_2}\cdots\ell_{i_{_{s}}}D_{i_1}D_{i_2}\cdots D_{i_{_{s}}}f^\alpha\nonumber\\
&\quad+\ell_{i_1}\ell_{i_2}\cdots\ell_{i_{_{s}}}\big(D_{i_1}A^{\alpha\beta}D_\beta D_{i_2}\cdots D_{i_{_{s}}}u+\sum_{\tau=1}^{s-1}D_{i_1}\cdots D_{i_{\tau}}(D_{i_{\tau+1}}A^{\alpha\beta}D_\beta D_{i_{\tau+2}}\cdots D_{i_{_{s}}}u)\big)\nonumber\\
&\quad-\delta_{\alpha d}\sum_{j=1}^m\mathbbm{1}_{x^d>h_j(x')} (n^d_j(x'))^{-1}\breve h_j(t,x')-A^{\alpha\beta}F_\beta\big),
\end{align}
where $\breve h_j(t,x')$ and $F_\beta$ are defined in  \eqref{brevehj} and \eqref{defmathF}, respectively.
Note that there are  $n\tbinom{d+s}{s+1}$ components in $D^{s+1}u$. From \eqref{Dbreveu} and \eqref{breveU}, we have $n\tbinom{d+s-1}{s+1}+n\tbinom{d+s-2}{s}$ equations. Since $$
\tbinom{d+s}{s+1}-\tbinom{d+s-1}{s+1}-\tbinom{d+s-2}{s}=\tbinom{d+s-2}{s-1},$$
we need another $n\tbinom{d+s-2}{s-1}$ equations to solve for $D^{s+1}u$.	For this, by taking the $(s-1)$-th derivative of the equation \eqref{systems}  with respect to $x$ in each subdomain,  we get the following $n\tbinom{d+s-2}{s-1}$ equations
\begin{align}\label{eqDDDu}
&A^{\alpha\beta}D_{\alpha\beta} D^{s-1}u\notag\\
&=D^{s-1}D_\alpha f^{\alpha}+D^{s-1}u_t-\sum_{i=1}^{s-1}\tbinom{s-1}{i}D^i A^{\alpha\beta}D^{s-1-i}D_{\alpha\beta} u-D^{s-1}(D_\alpha A^{\alpha\beta}D_\beta u).
\end{align}
It follows from the assumption $A^{\alpha\beta}\in C^{(s+\delta)/2,s+\delta}((-1+\varepsilon,0)\times (B_{1-\varepsilon}\cap\overline{{\cD}_{j}}))$  and \eqref{Dl-u} that
$$\sum_{i=1}^{s-1}\tbinom{s-1}{i}D^i A^{\alpha\beta}D^{s-1-i}D_{\alpha\beta} u,~D^{s-1}(D_\alpha A^{\alpha\beta}D_\beta u)\in C^{\mu'/2,\mu'}((-1+\varepsilon,0)\times (B_{1-\varepsilon}\cap\overline{{\cD}_{j}})).$$
Combining \eqref{estDDtu} and the condition on $f$, one can see that the right-hand side of \eqref{eqDDDu} is of class piecewise $C^{\mu'/2,\mu'}$. By mimicking the argument in the proof of Lemma \ref{lemma Dtildeu} (see Step 2), we can solve for $D^{s+1}u$ by using Cramer's rule and \eqref{Dbreveu}--\eqref{eqDDDu}. Moreover,
\begin{align*}
&\sum_{j=1}^{m+1}\|D^{s+1}u\|_{L_{\infty}(Q_r^-(z_1))}\nonumber\\
&\leq Nr^{\mu'}\sum_{j=1}^{m+1}\|D^{s+1}u\|_{L_\infty(Q_{2r}^-(z_1)\cap((-1+\varepsilon,0)\times\cD_j))}\notag\\
&\quad +Nr^{-1}\big(\|Du\|_{L_{1}(\cQ)}+\sum_{j=1}^{M}|f|_{(1+\delta)/2,1+\delta;(-1,0)\times\overline{\cD_{j}}}\big).
\end{align*}
With this estimate, we obtain our desired estimate of
$$
\sum_{j=1}^{m+1}\|D^{s+1}u\|_{L_\infty(Q_{1/4}^-\cap((-1+\varepsilon,0)\times\cD_j)))}
$$
by using a standard iteration argument. See, for instance, \cite[Lemma 3.4]{dx2019}. The lemma is proved.
\end{proof}

Finally, we  finish the proof of Proposition \ref{proptbreveu}.
\begin{proof}[{\bf Proof of Proposition \ref{proptbreveu}}]
(a) The proof of \eqref{holderbreveuU} closely follows  the argument in Section \ref{prfprop} by using Lemmas \ref{lemma iterab} and \ref{lemmaDDDu}, and thus is omitted.
	
(b) Note that the directional derivatives of $\ell$ are in general not piecewise H\"older continuous. However, for any $z_0=(t_0,x_0)\in(-1+\varepsilon,0)\times (B_{1-\varepsilon}\cap\overline{\cD}_{j_0})$, by using \eqref{Dlu} and \eqref{defu-0}, one can verify that the terms containing (directional) derivatives of $\ell$ at $x_0$ in  \eqref{Dbreveu} and \eqref{breveU}  are cancelled. Using \eqref{Dbreveu}--\eqref{eqDDDu} with $z=z_0$ and Cramer's rule, we find that $D^{s+1}u(z_0)$ is expressed by
\begin{equation}\label{combination}
\begin{split}
&D_{\ell^{k}}\breve u(z_{0};z_0),~ \breve U(z_{0};z_0),~D^{s-1}u_t(z_0),~D{\mathsf u}(z_0),\\
&\delta_{\alpha d}\sum_{j=1}^m\mathbbm{1}_{x^d>h_j(x')} (n^d_j(x'_0))^{-1}\breve h_j(t_0,x'_0),~n^\alpha(x_0),~D^{s}f^\alpha(z_0),\\
&A^{\alpha\beta}(z_0),~D^{i}A^{\alpha\beta}(z_0),~D^iu(z_0),~D^iu(t_0,P_jx_0),\\
&\mbox{and (directional)~derivatives~of}~\tilde\ell_{,j}(x_0)~\mbox{for~}j\neq j_0,
\end{split}
\end{equation}
where $i=1,\ldots,s$, and $\tilde{\ell}_{,j}:=(\tilde{\ell}_{1,j},\dots,\tilde{\ell}_{d,j})$ is the smooth extension  of $\ell|_{\cD_j}$ to $\cup_{k=1,k\neq j}^{m+1}\cD_k$. Similarly, for any $\tilde z_0=(\tilde t_0,\tilde z_0)\in(-1+\varepsilon,0)\times (B_{1-\varepsilon}\cap\overline{\cD}_{j_0})$, $D^{s+1}u(\tilde z_0)$ is expressed by \eqref{combination} with $z_0$ replaced with $\tilde z_0$. Thus, combining \eqref{pieceu}, \eqref{defsfu}, \eqref{holderbreveuU}, Assumption \ref{assump} (b), and Lemma \ref{lemmaDDDu}, we obtain
\begin{align*}
&[D^{s+1}u]_{\mu'/2,\mu';(-1+\varepsilon,0)\times (B_{1-\varepsilon}\cap\overline{\cD}_{j_0})}\\
& \leq N\Big(\|Du\|_{L_{1}((-1,0)\times \cD)}+\sum_{j=1}^{M}|f|_{(s+\delta)/2,s+\delta;(-1,0)\times \overline{\cD_{j}}}\Big)
\end{align*}
 for $j_0=1,\ldots,m+1$.
The proof of Proposition \ref{proptbreveu} is complete.
\end{proof}

\section*{Acknowledgement}

The authors would like to thank Prof. Yanyan Li and Youchan Kim for very helpful discussions on the subject.

\appendix

\section{}\label{Append}
Since the local boundedness and piecewise H\"{o}lder-regularity of $Du$,  and some auxiliary estimates proved in \cite{dx2021} are crucially used in the proofs above, we present them here for reader's convenience.

\begin{lemma}\cite[Lemma 4.5, Theorem 2.3]{dx2021}
\label{lemlocbdd}
Let $Q_1^-=(-1,0)\times B_1$, $\varepsilon\in (0,1)$, $p\in(1,\infty)$,  $A^{\alpha\beta}$ and $f^{\alpha}$ satisfy Assumption \ref{assump} with $s=0$. Let $u\in \mathcal{H}_{p}^{1}(Q_1^-)$ be a weak solution to \eqref{systems} in $Q_1^-$. Then $u\in C^{(1+\mu')/2,1+\mu'}((-1+\varepsilon,0)\times (B_{1-\varepsilon}\cap\overline{{\cD}_{j_0}}))$ and it holds that
\begin{align*}
&\|Du\|_{L_{\infty}(Q_{1/4}^{-})}+|u|_{(1+\mu')/2,1+\mu';(-1+\varepsilon,0)\times (B_{1-\varepsilon}\cap\overline{{\cD}_{j_0}})}\\
&\leq N\big(\|Du\|_{L_{1}(Q_1^-)}+\sum_{j=1}^{M}|f|_{\delta/2,\delta;(-1,0)\times\overline{\cD_{j}}}\big),
\end{align*}
where $j_0=1,\ldots,m+1$, $\mu'=\min\{\frac{1}{2},\mu\}$, $N>0$ is a constant depending only on $n,d,m,p,\nu,\varepsilon$, $|A|_{\delta/2,\delta;(-1,0)\times\overline{\cD_{j}}}$, and the $C^{1+\mu}$ norm of $h_j$.
\end{lemma}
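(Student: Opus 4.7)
I would adapt the Campanato-type scheme that reappears in the body of the paper (Proposition~\ref{proptildeu} and Proposition~\ref{proptbreveu}), but for the base case $s=0$, in which the quantity to be estimated is $Du$ itself. The key obstruction is that $Du$ is discontinuous across each interface $\Gamma_j$, so Campanato iteration cannot be applied to $Du$ directly. The observation that makes the argument go is that across $\Gamma_j$ the following two kinds of quantities are continuous:
\begin{align*}
D_{\ell^k} u, \quad k=1,\ldots,d-1, \qquad U := n^\alpha(A^{\alpha\beta}D_\beta u-f^\alpha),
\end{align*}
where $\ell^k$ and $n$ are the vector fields from Section~\ref{preliminary}. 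So the plan is to prove a decay estimate for the mean-oscillation functional
\begin{align*}
\Phi_0(z_0,r):=\inf_{\mathbf q^k,\mathbf Q\in\mathbb R^{n}} \left(\fint_{Q_r^-(z_0)}\bigl(|D_{\ell^k}u-\mathbf q^k|^{1/2}+|U-\mathbf Q|^{1/2}\bigr)\,dz\right)^{2},
\end{align*}
and then recover the full gradient $Du$ pointwise by Cramer's rule in a suitably rotated frame.

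The decay estimate is obtained as follows. I would first rotate to the $y$-coordinates associated with $x_0$ (Section~\ref{sec 2.3}), so that the interfaces become, up to a $\sqrt r$ tilt, horizontal slabs $\Omega_j$. Freezing $A^{\alpha\beta}$ and $f^\alpha$ to their piecewise averages in $y^d$ on $Q_r^-(\Lambda z_0)$, and letting $\eta$ be a cutoff, I would set up the auxiliary problem
\begin{align*}
\begin{cases}
-v_t+D_\alpha(\widetilde{\mathcal A^{\alpha\beta}}D_\beta v)=\Div(F\mathbbm 1_{Q_{r/2}^-(\Lambda z_0)}) & \text{in }Q_r^-(\Lambda z_0),\\
v=0 & \text{on }\partial_p Q_r^-(\Lambda z_0),
\end{cases}
\end{align*}
with $F$ equal to the coefficient-and-data perturbation plus the piecewise constant correction of $f$. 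The weak type-$(1,1)$ bound (the analogue of Lemma~\ref{weak est barv}) together with \eqref{volume} and the $C^{\delta/2,\delta}$ hypothesis yields $\|F\|_{L_1}\lesssim r^{d+2+\mu'}\mathcal C$, and hence a one-step $L^{1/2}$ decay for $v$. Writing the remainder $w:=\tilde{\mathfrak u}-u_1-v$ and noting that $w$ solves an equation in divergence form with coefficients depending only on $y^d$, the standard interior estimates (Lemma~\ref{lemma xn}) give a Campanato-type contraction for $(D_{y^k}w,\overline{\mathcal A^{d\beta}}D_{y^\beta}w)$. Comparing the two coordinate frames via the $\sqrt r$ estimates \eqref{diffell}--\eqref{diffell3} transfers the decay back to $\Phi_0$, producing
\begin{align*}
\Phi_0(z_0,\rho)\le N(\rho/r)^{\mu'}\Phi_0(z_0,r/2)+N\rho^{\mu'}\mathcal C
\end{align*}
for $0<\rho\le r\le 1/4$, in analogy with Lemma~\ref{lemma itera}.

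Once this decay estimate is in hand, standard iteration of the Cauchy property of the almost-minimizers $(\mathbf q^k_{z_0,2^{-L}r},\mathbf Q_{z_0,2^{-L}r})$ yields (under the a priori assumption that $Du$ is bounded and $C^{1/2}$ in $t$, piecewise $C^1$ in $x$) pointwise values of $D_{\ell^k}u(z_0)$ and $U(z_0)$ with a piecewise $C^{\mu'/2,\mu'}$ modulus. The a priori assumption is removed by the usual approximation together with the boundary-flattening trick of \cite[p.~2466]{dx2019}. To recover all of $Du$ one performs, at each fixed $z_0$, the linear change of variables $\Lambda x$ with $\Lambda^k=\ell^k(x_0)$, $\Lambda^d=n(x_0)$: in this frame the $(d-1)+1$ quantities $\{D_{\ell^k}u,U\}$ correspond to $D_{y^k}\hat u$ for $k<d$ and $\hat A^{d\beta}D_{y^\beta}\hat u - \hat f^d$, from which the missing $D_{y^d}\hat u$ is recovered since $\hat A^{dd}$ is invertible. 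This reconstruction together with the decay of $\Phi_0$ yields the piecewise $C^{(1+\mu')/2,1+\mu'}$ bound on $u$ together with $\|Du\|_{L_\infty(Q_{1/4}^-)}$ after a standard scaling/iteration argument as in \cite[Lemma~3.4]{dx2019}.

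The main obstacle is the same one that will later force the use of the auxiliary function $\mathfrak u_j$ in higher-order cases: obtaining the decay exponent $\mu'$ requires controlling the mismatch between the $0$-frame vector field $\ell^k$ and the constant tangent vector $\tau_k=\ell^k(y_0)$ uniformly in the distance between interfaces. The estimate $|\ell^k(x)-\tau_k|\le N\sqrt r$ in $B_r(x_0)$, proved by the three-case analysis \eqref{diffell}--\eqref{diffell3} using only $h_j\in C^{1+\mu}$ and the key inequality $|Dh_j-Dh_{j-1}|\le N|h_j-h_{j-1}|^{1/2}$, is exactly what caps the final Hölder exponent at $\mu'=\min\{1/2,\mu\}$ and is the heart of the estimate that is \emph{independent of the distance between interfaces}.
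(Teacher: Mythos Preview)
The paper does not prove this lemma; it is quoted from \cite{dx2021} (see the citation in the lemma header and the opening sentence of Appendix~\ref{Append}), so there is no proof in the present paper to compare against. Your outline is nonetheless an accurate reconstruction of the argument in \cite{dx2021}: for $s=0$ one works directly with $u$ (no auxiliary $u_0$ or $\mathfrak u_j$ is needed), applies the Campanato scheme to the pair $(D_{\ell^k}u,\,U)$ via the rotate--freeze--weak-$(1,1)$--Lemma~\ref{lemma xn} decomposition you describe, transfers between frames using the $\sqrt r$ tilt bound, and recovers $D_du$ from $U$ by inverting $\hat A^{dd}$. This is exactly the template that Sections~\ref{C2delta}--\ref{secgeneral} then elaborate for $s\ge 1$.
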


Let us recall the $\mathcal{H}_{p}^{1}$-estimate for parabolic equations with variably partially small bounded mean oscillation coefficients, i.e., there exists a small enough constant $\gamma_{0}=\gamma_{0}(d,n,p,\nu)\in (0,1/2)$ and a constant $r_{0}\in(0,1)$ such that for any $r\in(0,r_{0})$ and $(t_{0},x_{0})\in Q_{1}^{-}$ with $B_{r}(x_{0})\subset B_{1}$, in a coordinate system depending on $(t_{0},x_{0})$ and $r$, one can find a $\bar{A}=\bar{A}(x^{d})$ satisfying
\begin{equation}\label{BMO}
\fint_{Q_{r}^{-}(t_{0},x_{0})}|A(t,x)-\bar A(x^{d})|\ dx\,dt\leq\gamma_{0}.
\end{equation}

\begin{lemma}\label{lem loc lq}\cite[Lemma 3.4]{dx2021}
Let $1<q<\infty$. Assume that $A$ satisfies \eqref{BMO} with a sufficiently small constant $\gamma_{0}=\gamma_{0}(d,n,q,\nu)\in (0,1/2)$ and $u\in \mathcal{H}_{1,\text{loc}}^{1}$ satisfies
$$-u_{t}+D_\alpha(A^{\alpha\beta}D_\beta u)=g+ \Div f\quad\mbox{in}~Q_1^-,$$
where $f,g\in L_{q}(Q_{1}^{-})$. Then
$$\|u\|_{\mathcal{H}_{q}^{1}(Q_{1/2}^{-})}\leq N(\|u\|_{L_{1}(Q_{1}^{-})}+\|g\|_{L_{q}(Q_{1}^{-})}+\|f\|_{L_{q}(Q_{1}^{-})}),$$
where $N$ depends on $n,d,\nu,q$, and $r_{0}$.
\end{lemma}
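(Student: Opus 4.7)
The plan is to view this local $\mathcal{H}_q^1$-estimate as a cutoff-and-localization consequence of the global $\mathcal{H}_q^1$-solvability for parabolic systems in divergence form whose coefficients satisfy the partially small BMO assumption \eqref{BMO} (due to Krylov and Dong--Kim). That black-box result asserts that any $w\in\mathcal{\mathring{H}}_q^1(Q_1^-)$ solving $-w_t+D_\alpha(A^{\alpha\beta}D_\beta w)=\tilde g+\Div\tilde f$ with zero parabolic boundary data satisfies
\begin{equation*}
\|w\|_{\mathcal{H}_q^1(Q_1^-)}\le N(\|\tilde g\|_{L_q(Q_1^-)}+\|\tilde f\|_{L_q(Q_1^-)})
\end{equation*}
with $N=N(n,d,\nu,q)$, provided $\gamma_0$ is sufficiently small.

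Next, for $1/2\le r<R\le 1$ I would pick a cutoff $\zeta\in C_0^\infty(Q_1^-)$ with $\zeta\equiv 1$ on $Q_r^-$, $\supp\zeta\subset Q_R^-$, $|D\zeta|\le N(R-r)^{-1}$, and $|\zeta_t|\le N(R-r)^{-2}$, and apply the global estimate to $v:=\zeta u$. A direct computation shows that $v$, extended by zero, solves
\begin{equation*}
-v_t+D_\alpha(A^{\alpha\beta}D_\beta v)=\tilde g+\Div\tilde f\quad\text{in}~Q_1^-,
\end{equation*}
with $\tilde g=-u\zeta_t+A^{\alpha\beta}D_\alpha\zeta\,D_\beta u+\zeta g-f^\alpha D_\alpha\zeta$ and $\tilde f^\alpha=\zeta f^\alpha+A^{\alpha\beta}u\,D_\beta\zeta$. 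Estimating $\|\tilde g\|_{L_q}$ and $\|\tilde f\|_{L_q}$ term by term and using $v=u$ on $Q_r^-$ yields
\begin{equation*}
\|u\|_{\mathcal{H}_q^1(Q_r^-)}\le N\big[\|g\|_{L_q(Q_1^-)}+\|f\|_{L_q(Q_1^-)}+(R-r)^{-1}\|Du\|_{L_q(Q_R^-)}+(R-r)^{-2}\|u\|_{L_q(Q_R^-)}\big].
\end{equation*}

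To close the estimate I would combine the last inequality with the interpolation $\|u\|_{L_q(Q_R^-)}\le\epsilon\|Du\|_{L_q(Q_R^-)}+C\epsilon^{-\alpha}\|u\|_{L_1(Q_R^-)}$, choosing $\epsilon=(R-r)^2$ to absorb the $\|u\|_{L_q}$ term, and then invoke a standard iteration lemma (Giaquinta--Modica type) on a geometric sequence $r_k\nearrow 1$. The required contraction ratio $\theta<1$ comes from Widman's hole-filling observation: since $D\zeta$ and $\zeta_t$ are supported in $Q_R^-\setminus Q_r^-$, the $(R-r)^{-1}\|Du\|_{L_q(Q_R^-)}$ term is actually dominated by $(R-r)^{-1}\|Du\|_{L_q(Q_R^-\setminus Q_r^-)}$, and adding a suitable multiple of $\|Du\|_{L_q(Q_r^-)}$ to both sides turns the inequality into $\Phi(r)\le\theta\,\Phi(R)+N(R-r)^{-\beta}\|u\|_{L_1(Q_1^-)}+N(\|f\|_{L_q(Q_1^-)}+\|g\|_{L_q(Q_1^-)})$ with $\theta=A/(A+1)<1$.

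The main obstacle is precisely this closure step: the naive cutoff produces a blowing-up coefficient $(R-r)^{-1}$ in front of the higher-order norm, incompatible with the $\theta<1$ condition of the iteration lemma, and this is exactly what Widman's hole-filling resolves. A secondary subtlety is that the hypothesis only furnishes $u\in\mathcal{H}_{1,\loc}^1$, so to make the a priori computation rigorous I would first run the scheme at an exponent $p$ slightly above $1$ (where the same global solvability is available) and then bootstrap up to $q$ in finitely many parabolic Sobolev-embedding steps.
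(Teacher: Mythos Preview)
The paper does not prove this lemma; it is quoted from \cite[Lemma~3.4]{dx2021} in the appendix without argument. Your cutoff-and-global-estimate framework is the right starting point and the computation of $\tilde f^\alpha,\tilde g$ is correct, but the absorption step via Widman's hole-filling does not close. Hole-filling yields a \emph{fixed} contraction $\theta=A/(A+1)<1$ only when the constant $A$ multiplying the annulus term is independent of $R-r$, as in a genuine $L_2$ Caccioppoli inequality where that constant is the ellipticity ratio. In your inequality the annulus term $\|Du\|_{L_q(Q_R^-\setminus Q_r^-)}$ carries the blowing-up factor $N(R-r)^{-1}$; after passing to $q$-th powers and filling the hole one gets $\theta(R-r)=\bigl(1+c(R-r)^q\bigr)^{-1}\to1$ as $R-r\to0$, and along any sequence $r_k\nearrow1$ one checks that $\prod_k\theta(r_{k+1}-r_k)$ stays bounded away from zero, so the top-order term is never absorbed.

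What actually closes the estimate is the Sobolev-embedding bootstrap you relegate to a ``secondary subtlety'': the global result really controls $\|v\|_{\mathcal H_q^1}$ by $\|\tilde f\|_{L_q}+\|\tilde g\|_{\mathbb H_q^{-1}}$, and the parabolic embedding $L_{q_*}\hookrightarrow\mathbb H_q^{-1}$ with $q_*=(d+2)q/(d+2+q)<q$ means the troublesome term $A^{\alpha\beta}D_\alpha\zeta\,D_\beta u$ needs only $Du\in L_{q_*}$ on the annulus. This replaces the self-referential inequality by a genuine improvement $q_*\mapsto q$, iterated finitely many times (each step shrinking the cylinder a fixed amount, no hole-filling needed). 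The base step $\mathcal H_{1,\loc}^1\to\mathcal H_{p,\loc}^1$ for some $p>1$ still requires a separate argument---this is precisely the content of \cite[Corollary~2.2]{dx2021} invoked in the remark after Theorem~\ref{Mainthm}; your proposal to ``run the same scheme at $p$ slightly above~$1$'' is circular for the same reason, since applying the $\mathcal H_p^1$ estimate to $\zeta u$ already presupposes $Du\in L_p$ on the support of $D\zeta$.
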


The $\mathcal{H}_{p}^{1}$-solvability for parabolic systems with coefficients which satisfy \eqref{BMO} in $Q_{1}^{-}$ is also used in the proof above.

\begin{lemma}\label{solvability}\cite[Lemma 3.5]{dx2021}
For any $p\in(1,\infty)$, $f\in L_{p}(Q_{1}^{-})$, the following hold.
\begin{enumerate}
\item
For any $u\in \mathcal{\mathring{H}}_{p}^{1}(Q_{1}^{-})$ satisfying
\begin{align}\label{approxi sol}
-u_t+D_{\alpha}(\boldsymbol{A}^{\alpha\beta}D_\beta u)=\Div f&\quad\mbox{in}~Q_{1}^{-}
\end{align}
and $u(-1,\cdot)\equiv0$ in $B_{1}$,
we have
\begin{align}\label{est H u}
\|u\|_{\mathcal{H}_{p}^{1}(Q_{1}^{-})}\leq N\|f\|_{L_{p}(Q_{1}^{-})},
\end{align}
where $\boldsymbol{A}^{\alpha\beta}$ is defined in \eqref{mathcalA}, $N$ depends on $d,n,p,\nu$, and $r_{0}$.
		
\item
For any $f\in L_{p}(Q_{1}^{-})$, there exists a unique solution $u\in \mathcal{\mathring{H}}_{p}^{1}(Q_{1}^{-})$ of \eqref{approxi sol} with the initial data $u(-1,\cdot)\equiv0$ in $B_{1}$. Furthermore, $u$ satisfies \eqref{est H u}.
\end{enumerate}
\end{lemma}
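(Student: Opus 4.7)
The plan is to prove (1) the \emph{a priori} estimate \eqref{est H u}, and then obtain existence and uniqueness in (2) by the method of continuity combined with this estimate.

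For the a priori estimate, the first observation is that $\boldsymbol{A}^{\alpha\beta}$ inherits the variably partially small BMO condition \eqref{BMO}: outside $B_{3/4}$ the coefficient reduces to the constant $\nu\delta_{\alpha\beta}\delta_{ij}$ (trivially BMO), inside $B_{3/4}$ it agrees with $A^{\alpha\beta}$, and the transition through the smooth cutoff $\zeta$ can be absorbed after possibly shrinking $r_0$. I would then cover $\overline{Q_1^-}$ by finitely many small parabolic cylinders $Q_r^-(z_i)$ with $r<r_0$: interior cylinders are handled by Lemma \ref{lem loc lq}, giving
$\|u\|_{\mathcal{H}_p^1(Q_{r/2}^-(z_i))}\le N(\|f\|_{L_p(Q_r^-(z_i))}+\|u\|_{L_1(Q_r^-(z_i))})$;
boundary pieces are treated by the analogous $L_p$ estimate for the heat system, which applies because $\boldsymbol{A}\equiv\nu\delta_{\alpha\beta}\delta_{ij}$ in a neighborhood of $\partial_p Q_1^-$. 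Summing these local estimates yields $\|u\|_{\mathcal{H}_p^1(Q_1^-)}\le N(\|f\|_{L_p(Q_1^-)}+\|u\|_{L_1(Q_1^-)})$, and the $L_1$ remainder is absorbed via the standard $\mathcal{H}_2^1$-energy estimate (obtained by testing the equation against $u$) together with H\"older's inequality on the bounded cylinder $Q_1^-$.

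For (2), I would apply the method of continuity along the path $\boldsymbol{A}_\theta := (1-\theta)\nu\delta_{\alpha\beta}\delta_{ij} + \theta\,\boldsymbol{A}^{\alpha\beta}$ for $\theta\in[0,1]$. Each $\boldsymbol{A}_\theta$ is still $\nu$-elliptic and satisfies the partial BMO bound \eqref{BMO} with the same $\gamma_0$ by convexity, so the a priori estimate from step~(1) is uniform in $\theta$. At $\theta=0$ the equation reduces to the heat system on $Q_1^-$ with zero parabolic boundary data, whose solvability in $\mathcal{\mathring{H}}_{p}^{1}(Q_1^-)$ is classical (via Calder\'on--Zygmund theory or the Mikhlin multiplier theorem applied to Duhamel's representation). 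The continuity method then propagates solvability to $\theta=1$, and uniqueness is immediate from the a priori estimate applied to the difference of two solutions.

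The main obstacle is establishing the global a priori estimate cleanly: the partial BMO condition is only local, so one must piece together many small-cylinder estimates and absorb the resulting $L_1$ error term. The design of $\boldsymbol{A}$---namely, that the cutoff $\zeta$ is chosen so that $\boldsymbol{A}$ coincides with the heat operator outside $B_{3/4}$---is precisely what makes the boundary piece elementary: there the BMO oscillation vanishes identically and the classical $W_p^{1,2}$ estimates for the heat system up to $\partial_p Q_1^-$ take over, so no special analysis is needed near the lateral boundary.
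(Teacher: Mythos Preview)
The paper does not prove this lemma; it is stated in the Appendix as a citation of \cite[Lemma~3.5]{dx2021} and used as a black box. So there is no ``paper's own proof'' to compare against, and your outline is a reasonable reconstruction of the standard argument behind such solvability results (localization via the partial BMO condition, trivialization near the lateral boundary thanks to the cutoff in the definition of $\boldsymbol{A}^{\alpha\beta}$, and the method of continuity from the heat system).

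One point that deserves more care is the absorption of the $\|u\|_{L_1}$ remainder. Your proposed route---bound $\|u\|_{L_1}\le C\|u\|_{L_2}$ and then invoke the $\mathcal{H}_2^1$ energy estimate $\|u\|_{L_2}+\|Du\|_{L_2}\le N\|f\|_{L_2}$---only closes when $p\ge 2$, since you then need $\|f\|_{L_2}\le C\|f\|_{L_p}$ on the bounded cylinder. For $1<p<2$ this step fails as written. The usual fixes are either (i) to run the argument first for exponents $\ge 2$ and then obtain the range $1<p<2$ by duality (the adjoint operator has transposed coefficients which satisfy the same partial BMO hypothesis), or (ii) to prove the a priori estimate with an auxiliary zeroth-order term $-\lambda u$ and the stronger conclusion $\sqrt{\lambda}\,\|u\|_{L_p}+\|Du\|_{L_p}\le N\|f\|_{L_p}$ for $\lambda$ large, then remove $\lambda$ using the finite time interval. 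Either route is standard and compatible with your overall plan; just be aware that the energy-estimate shortcut does not by itself cover the full range $p\in(1,\infty)$.
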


Denote
$$\mathcal{P}_{0}u:=-u_{t}+D_{\alpha}(\bar{A}^{\alpha\beta}(x^{d})D_{\beta}u).$$
For parabolic systems with coefficients depending only on $x^{d}$, we have the following result.

\begin{lemma}\label{lemma xn}\cite[Lemma 3.6]{dx2021}
Let $p\in(0,\infty)$. Assume that $u\in C_{\text{loc}}^{0,1}$ satisfies $\mathcal{P}_{0}u=0$ in $Q_{1}^{-}$. Then there exists a constant $N=N(n,d,p,\nu)$ such that
\begin{align*}
[D_{x'}u]_{1/2,1;Q_{1/2}^{-}}\leq N\|D_{x'}u\|_{L_{p}(Q_{1}^{-})},\quad [\bar{A}^{d\beta}(x^{d})D_{\beta}u]_{1/2,1;Q_{1/2}^{-}}\leq N\|Du\|_{L_{p}(Q_{1}^{-})}.
\end{align*}
\end{lemma}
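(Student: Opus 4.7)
The plan is to exploit the fact that the coefficients $\bar A^{\alpha\beta}(x^d)$ are independent of $x'$ and $t$, so that tangential and temporal differentiation preserve the equation $\mathcal P_0 v = 0$. I will combine iterated tangential regularity with a ``conormal flux'' trick to handle the $x^d$-direction, where the solution cannot be differentiated naively because $\bar A^{\alpha\beta}$ is only bounded measurable in $x^d$.

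First I would observe that for any solution $v$ of $\mathcal P_0 v = 0$ and any multi-index in $(x', t)$, the derivative $D_{x'}^j \partial_t^k v$ again satisfies $\mathcal P_0(D_{x'}^j \partial_t^k v) = 0$. Iterating the interior $L_q$ estimate of Lemma \ref{lem loc lq} (with $f = g = 0$) on a nested sequence of cylinders, and invoking Sobolev embedding to upgrade $L_q$ to $L_\infty$, yields
\begin{equation*}
\|D_{x'}^j \partial_t^k v\|_{L_\infty(Q_{3/4}^-)} \le N(n,d,p,\nu,j,k)\,\|v\|_{L_p(Q_1^-)}.
\end{equation*}
Applied to $v=u$ and $v = D_{x'}u$, this already delivers control of $\|D^2_{x'}u\|_{L_\infty}$ and $\|\partial_t D_{x'}u\|_{L_\infty}$, which is what is needed for Lipschitz regularity of $D_{x'}u$ in the tangential variables $x'$ and for $C^{1/2}$ regularity in $t$.

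Next, to handle the $x^d$-direction and the conormal flux $W := \bar A^{d\beta}(x^d) D_\beta u$, I would use strong ellipticity, applied to $\xi = e_d \otimes \eta$, to invert $\bar A^{dd}(x^d)$ and solve
\begin{equation*}
D_d u = (\bar A^{dd})^{-1}\bigl(W - \sum_{j<d}\bar A^{dj} D_j u\bigr).
\end{equation*}
Since $\bar A$ depends only on $x^d$, tangential derivatives commute through the coefficients, and $D_{x'}W = \bar A^{d\beta} D_\beta(D_{x'}u)$ is exactly the conormal flux of the solution $D_{x'}u$; iterating delivers all tangential $L_\infty$ bounds on $W$ via the previous step. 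For the $x^d$-derivative I would use the equation itself,
\begin{equation*}
\partial_d W = \partial_t u - \sum_{i<d}\bar A^{i\beta} D_i D_\beta u,
\end{equation*}
and substitute $D_iD_d u = D_d D_i u = (\bar A^{dd})^{-1}\bigl(D_iW - \sum_{j<d}\bar A^{dj} D_j D_i u\bigr)$ on the right-hand side, reducing everything to quantities already controlled. The same inversion formula gives $D_d D_{x'}u$ bounded, so $D_{x'}u$ is Lipschitz in $x^d$; finally $\partial_t W = \bar A^{d\beta} D_\beta \partial_t u$ is itself the conormal flux of the solution $\partial_t u$, yielding the $t$-regularity of $W$.

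The main obstacle is that $D_d u$ itself need not even be continuous --- it can jump across level sets where $\bar A$ is discontinuous --- so one cannot differentiate the equation directly in $x^d$. The workaround is to use $W$ as a continuous proxy for the normal derivative and let tangential differentiation plus the PDE do all the work: every bound on an $x^d$-derivative must be extracted either from the inversion formula for $D_d u$ in terms of $W$ and tangential derivatives, or from the equation's expression for $\partial_d W$.
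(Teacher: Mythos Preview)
Your approach is sound and is exactly the standard argument for this type of result. The paper does not actually prove this lemma here; it is quoted verbatim from \cite[Lemma 3.6]{dx2021} and placed in the appendix as an auxiliary tool, so there is no ``paper's own proof'' to compare against beyond that citation. That said, the proof in \cite{dx2021} proceeds along the same lines you describe: use that $D_{x'}$ and $\partial_t$ commute with $\mathcal P_0$ because $\bar A^{\alpha\beta}$ depends only on $x^d$, iterate the interior $\mathcal H^1_q$ estimate and Sobolev embedding to control all tangential/temporal derivatives, and then recover regularity in the $x^d$-direction not by differentiating $D_d u$ (which may jump) but by working with the conormal quantity $W=\bar A^{d\beta}D_\beta u$, whose $x^d$-derivative is read off directly from the equation. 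One small point worth making explicit in a write-up: when $p<1$ you cannot invoke Lemma~\ref{lem loc lq} directly on $\|D_{x'}u\|_{L_p}$; you first use that $u\in C^{0,1}_{\loc}$ (or a reverse H\"older/Caccioppoli step for the homogeneous equation) to pass to an $L_q$ norm with $q\ge 1$ on a slightly smaller cylinder, and only then iterate.
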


\bibliographystyle{plain}

\end{document}